\newcommand{\overbar}[1]{\mkern 1.7mu\overline{\mkern-1.7mu#1\mkern-1.7mu}\mkern 1.7mu}
\newcommand*{\textlabel}[2]{%
	\edef\@currentlabel{#1}% Set target label
	\phantomsection% Correct hyper reference link
	#1\label{#2}% Print and store label
}
\crefname{equation}{}{}
\crefname{subsection}{subsection}{Subsections}
\crefname{theo}{Theorem}{Theorems}
\crefname{coro}{Corollary}{Corollaries}
\crefname{prop}{Proposition}{Propositions}
\crefname{exam}{Example}{Examples}
\crefname{assum}{Assumption}{Assumptions}
\newcommand{\refcheckize}[1]{%
	\expandafter\let\csname @@\string#1\endcsname#1%
	\expandafter\DeclareRobustCommand\csname relax\string#1\endcsname[1]{%
		\csname @@\string#1\endcsname{##1}\wrtusdrf{##1}}%
	\expandafter\let\expandafter#1\csname relax\string#1\endcsname
}
\newtheorem{theo}{Theorem}[section]
\newtheorem{coro}[theo]{Corollary}
\newtheorem{lemm}[theo]{Lemma}
\newtheorem{prop}[theo]{Proposition}
\theoremstyle{definition}
\newtheorem{defi}[theo]{Definition}
\newtheorem{assum}[theo]{Assumption}
\newtheorem{exam}[theo]{Example}
\newtheorem{rema}[theo]{Remark}
\numberwithin{equation}{section}
\newcommand{\R}{\mathbb R}
\newcommand{\bN}{\mathbb N}
\newcommand{\bQ}{\mathbb Q}
\newcommand{\bD}{\mathbb D}
\newcommand{\E}{\mathbb E}
\newcommand{\p}{\mathbb P}
\newcommand{\bF}{\mathbb F}
\newcommand{\1}{\mathbbm{1}}
\newcommand{\m}{\mathbbm{m}}
\newcommand{\bI}{\mathbb I}
\newcommand{\sS}{\mathscr S}
\newcommand{\sUS}{\mathscr{US}}
\newcommand{\bfA}{\mathbf A}
\newcommand{\cB}{\mathcal B}
\newcommand{\cD}{\mathcal D}
\newcommand{\cP}{\mathcal P}
\newcommand{\cE}{\mathcal E}
\newcommand{\cM}{\mathcal M}
\newcommand{\cN}{\mathcal N}
\newcommand{\G}{\mathcal G}
\newcommand{\F}{\mathcal F}
\newcommand{\cS}{\mathcal S}
\newcommand{\cT}{\mathcal T}
\newcommand{\rmI}{\mathrm{I}}
\newcommand{\rmD}{\mathrm{D}}
\newcommand{\rmC}{\mathrm{C}}
\newcommand{\rmS}{\mathrm{S}}
\newcommand{\whP}{\ol{\mathbb P}}
\newcommand{\wt}{\widetilde}
\newcommand{\wh}{\widehat}
\newcommand{\ol}{\overbar}
\newcommand{\lee}{\leqslant}
\newcommand{\gee}{\geqslant}
\newcommand{\ep}{\varepsilon}
\newcommand{\e}{\mathrm{e}}
\newcommand{\mmm}{\mathbb P^*}
\newcommand{\mme}{\mathbb E^*}
\newcommand{\supp}{\mathrm{supp}\,}
\newcommand{\Leb}{\lambda}
\newcommand{\BMO}{\mathrm{BMO}}
\newcommand{\bmo}{\mathrm{bmo}}
\newcommand{\mar}{\mathrm{m}}
\newcommand{\fv}{\mathrm{fv}}
\newcommand{\corr}{\mathrm{corr}}
\newcommand{\riem}{\mathrm{Rm}}
\newcommand{\pd}{\partial}
\newcommand{\CL}{\mathrm{CL}}
\newcommand{\cSM}{\mathcal{SM}}
\newcommand{\RH}{\mathcal{RH}}
\newcommand{\im}{\mathrm{i}}
\newcommand{\od}{\mathrm{d}}
\newcommand{\ttt}{\tilde\vartheta}
\newcommand{\ts}{\textstyle}
\newcommand{\sn}{\kappa(\sigma, \nu)}
\def\({\left(}
\def\){\right)}
\def\[{\hspace{-.05cm}\left[}
\def\]{\right]}
\def\<{\langle}
\def\>{\rangle}
\newcommand{\comb}[2]{#1 \sqcup #2 }
\newcommand{\ce}[2]{\E_{#1}\hspace{-.05cm}\left [ #2 \right ]}
\newcommand{\bce}[2]{\E_{#1} \hspace{-.05cm}\big[ #2 \big]}
\newcommand{\hce}[2]{\E^{\whP}_{#1}\hspace{-.05cm}\left [ #2 \right ]}
\newcommand{\hbce}[2]{\E^{\whP}_{#1} \hspace{-.05cm}\big[ #2 \big]}
\newcommand{\hbbce}[2]{\E^{\whP}_{#1} \hspace{-.05cm}\bigg[ #2 \bigg]}
\newcommand{\cem}[2]{ \E^{*}_{#1}\hspace{-.1cm}\left [ #2 \right ]}
\newcommand{\bcem}[2]{ \E^{*}_{#1} \hspace{-.05cm}\big[ #2 \big]}
\begin{document}

\title[Explicit F\"ollmer--Schweizer decomposition]{Explicit F\"ollmer--Schweizer decomposition and discretization with jump correction in exponential L\'evy models}

%    Information for second author
\author{Nguyen Tran Thuan}
\address{$^{1}$Department of Mathematics, Saarland University,  Postfach 15~11~50, 66041 Saarbr\"ucken, Germany}
\email{nguyen@math.uni-sb.de, thuan.tr.nguyen@gmail.com}

\address{$^{2}$Department of Mathematics, Vinh University, 182 Le Duan, Vinh, Nghe An, Vietnam}

\thanks{The author was supported by the Project 298641 ``\textit{Stochastic Analysis and Nonlinear Partial Differential Equations, Interactions and Applications}'' of the Academy of Finland.}
\date{\today}
%    General info
\subjclass[2010]{Primary: 60H05, 41A25; Secondary: 60G51, 91G99}

\keywords{Approximation of stochastic intgral, F\"ollmer--Schweizer decomposition, L\'evy process, Local risk-minimizing, Weighted bounded mean oscillation}

\begin{abstract}
	We investigate two hedging problems in exponential L\'evy models. First, we provide
	an explicit representation for the F\"ollmer--Schweizer
	decomposition of European type options under mild conditions, which implies a closed-form
	expression of the corresponding local risk-minimizing strategies. Secondly,
	we discretize stochastic integrals driven by an exponential L\'evy process using a jump correction method. The convergence rate of the resulting discretization error as the expected number of discretization times increases is
	measured in weighted BMO spaces, implying also $L_p$-estimates, $p \in (2, \infty)$.
	Moreover,  the effect of a change of measure satisfying a reverse
	H\"older inequality is addressed. As an application, the error caused by
	discretizing the local risk-minimizing strategies is investigated
	in dependence of properties of the L\'evy measure, the regularity of the
	payoff function and the chosen random discretization times.
\end{abstract}

%\end{comment}

\maketitle

%\today

\section{Introduction}

 This article is concerned with hedging problems in semimartingale financial markets driven by exponential L\'evy processes. We investigate two problems corresponding to two typical types of risks for hedging an option.
 The first one comes from the incompleteness of the market. We consider the semimartingale setting and aim to determine an explicit form for the F\"ollmer--Schweizer decomposition of European type options which provides directly a \textit{closed form} for the local risk-minimizing strategies (a similar closed form expression in the martingale setting has been established in \cite{CTV05, JMP00, Ta10, Th20}).
 The second type of risk is due to the impossibility of continuously rebalancing a hedging portfolio which leads to the discrete-time hedging. We use an approximation scheme based on tracking jumps of the driving process, the so-called \textit{discretization with jump correction}, and measure the discretization error in weighted \textit{bounded mean oscillation} (BMO) spaces. This approach enables to achieve good distributional tail estimates for the error such as a $p$th-order polynomial decay, $p \in (2, \infty)$. 
 
 Let us introduce some notations to state the main results. Let $T \in (0, \infty)$ be a fixed time horizon and  $X = (X_t)_{t\in [0, T]}$ a L\'evy process defined on a complete filtered probability space $(\Omega, \F, \p, \bF)$, where  $\bF = (\F_t)_{t\in [0, T]}$ is the augmented natural filtration of $X$ which satisfies the usual conditions (right continuity and completeness). Assume that $\F = \F_T$. Let $\sigma\gee 0$ be the coefficient of the standard Brownian component and $\nu$ the L\'evy measure of $X$, see \eqref{charac-exponent}. We assume that the underlying discounted price process is modelled by the exponential $S = \e^X$.
 
   \subsection{Explicit F\"ollmer--Schweizer (FS) decomposition}
 Because models with jumps typically correspond to incomplete markets, in general there is no hedging strategy which is self-financing and replicates an option at maturity. Hence, one has to look for certain strategies that minimize some types of risk. In the current work, we choose the quadratic hedging approach which is a popular method to deal with the problem in models with jumps. We refer the reader to the survey article \cite{Sc01} for this approach. Two typical types of quadratic hedging strategies are the \textit{local risk-minimizing} (LRM) strategies and the \textit{mean-variance hedging} (MVH) strategies. Roughly speaking, the LRM strategy is mean-self-financing, replicates an option at maturity and minimizes the riskiness of the cost process locally in time, while the MVH strategy is self-financing and minimizes the \textit{global} hedging error in the mean square sense. Both types of those strategies are intimately related to the so-called \textit{FS decomposition}. Namely, in our (exponential L\'evy) setting, the FS decomposition gives directly the LRM strategy, and the MVH strategy then can be determined  based on this decomposition. This article discusses the FS decomposition and focuses on the LRM strategies only.

	 Assume that $S$ is square integrable so that it is a semimartingale satisfying the \textit{structure condition}, and that the \textit{mean-variance trade-off process} of $S$ is deterministic and bounded, see \cref{rema:SC-condition}. Then, the FS decomposition of an $H \in L_2(\p)$ is of the form
 \begin{align}\label{eq:intro:FS-decom}
 H = H_0 + \int_0^T \vartheta^H_t \od S_t + L^H_T,
 \end{align}
 where $H_0 \in \R$, $\vartheta^H$ is an admissible integrand specified in \eqref{eq:adm-strategies}, and $L^H$ is an $L_2(\p)$-martingale starting at zero which is orthogonal to the martingale part of $S$. The integrand $\vartheta^H$ is called the LRM strategy of $H$, and it is unique up to a $\p \otimes \Leb$-null set where $\Leb$ is the Lebesgue measure. A key tool to study the FS decomposition is the \textit{minimal (signed) local martingale measure} for $S$ (see \cite{Sc95}), and we denote this signed measure by $\mmm$ from now on. Recently, \cite[Theorem 4.3]{CVV10} indicated that under a regularity condition for $\mmm$, we can determine the LRM strategy $\vartheta^H$ based on the martingale representation of $H$ with respect to $\mmm$.

 There are many works interested in finding an explicit representation for the FS decomposition and the LRM strategy in the semimartingale framework (see, e.g., \cite{AS15, GOR14, HKK06, KP10, Ta10}). In the  exponential L\'evy setting and in the case of a European type option $H = g(S_T)$, Hubalek et al. \cite{HKK06} assumed that the function $g$ can be represented as an integral transform of a finite complex  measure from which one can determine a closed form for the LRM strategy. The key idea of this approach is the separation of the function $g$ and the underlying price process $S$ by using a kind of inverse Fourier transform. An advantage of this method is that one gains much flexibility for choosing the underlying L\'evy process where there is no extra regularity required for the driving process $S$ except some mild integrability.

 As our first main result,  \cref{theo:LRM-strategy} below provides a closed form for the LRM strategy $\vartheta^H$ of a European type option $H = g(S_T)$. To obtain this result, except of some mild integrability conditions, we neither assume any regularity for the payoff function $g$ nor require any extra condition for the small jump behavior of $X$ and nor need the presence of the diffusion component. Instead of those regularities, we require the condition that $\mmm$ exists as a true probability measure (see \cref{assum:condition-MMM}) which leads to a constraint for the characteristics of $X$. This result might be regarded as a counterpart of \cite[Proposition 3.1]{HKK06} in which only the square integrability is required for $S$ while the function $g$ are supposed to be the integral transform of finite complex  measures. The notation $\mme$ below means the expectation with respect to $\mmm$.

  \begin{theo}\label{theo:LRM-strategy} Assume that $X$ is not a.s. deterministic and $S =\e^X$ is square $\p$-integrable. If $\mmm$ is a probability measure (i.e., \cref{assum:condition-MMM} holds), then for any Borel function $g\colon (0, \infty) \to \R$ with $\mme |g(y S_t)| <\infty$ $\forall(t, y) \in [0, T] \times (0, \infty)$ and $g(S_T) \in L_2(\p) \cap L_2(\mmm)$ the following assertions hold:
  	\begin{enumerate}[\rm(1)]
  		\itemsep0.3em
  		
  		\item \label{item:LRM-strategy-form} The LRM strategy $\vartheta^H$ corresponding to $H = g(S_T)$ is of the form 
  		\begin{align}\label{eq:LRM-strategy}
  		\vartheta^H_t = \frac{1}{\sn}\(\sigma^2\pd_y G^*(t, S_{t-}) + \int_{\R}\frac{G^*(t, \e^x S_{t-}) - G^*(t, S_{t-})}{S_{t-}} (\e^x -1)\nu(\od x)\)
  		\end{align}
  		for $\p \otimes \Leb$-a.e. $(\omega, t) \in \Omega \times [0, T]$, where $\sn := \sigma^2 + \int_{\R}(\e^x -1)^2\nu(\od x) \in (0, \infty)$,  $G^*(t, y) : = \mme g(y S_{T-t})$, and we set $\pd_y G^* : = 0$ when $\sigma =0$ by convention.
  		
  		\item \label{item:LRM-strategy-cadlag}  There exists a process $\tilde \vartheta^g$ which is adapted and c\`adl\`ag  on $[0, T)$ such that
  		\begin{enumerate}[\rm (a)]
  			\itemsep0.2em
  			\item $\tilde \vartheta^g_{t-}(\omega) = \vartheta^H_t(\omega)$ for $\p \otimes \Leb$-a.e. $(\omega, t) \in \Omega \times [0, T)$;
  			
  			\item $\tilde \vartheta^g S$ is a  $\mmm$-martingale.
  		\end{enumerate}
  	\end{enumerate}
  \end{theo}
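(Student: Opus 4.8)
The plan is to identify the local risk-minimizing strategy with a Galtchouk--Kunita--Watanabe type projection under the minimal martingale measure $\mmm$, and then to evaluate that projection explicitly using the Markov structure of $S = \e^X$ under $\mmm$. Since \cref{assum:condition-MMM} makes $\mmm$ a genuine probability measure and the mean-variance trade-off process deterministic and bounded, its density process is a well-behaved exponential martingale and $X$ stays a L\'evy process under $\mmm$. Hence the value process $\wh V_t := \cem{t}{g(S_T)}$ is Markovian: writing $S_T = S_t\,\e^{X_T - X_t}$ and using that $X_T - X_t$ is $\mmm$-independent of $\F_t$ with the $\mmm$-law of $X_{T-t}$, one obtains $\wh V_t = G^*(t, S_t)$, which is finite by the integrability hypothesis on $g$. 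By \cite[Theorem 4.3]{CVV10}, the regularity enjoyed by $\mmm$ here guarantees that the FS decomposition \eqref{eq:intro:FS-decom} of $H$ has value process $\wh V$; testing that decomposition against the $\p$-martingale part $M$ of $S$ and using the $\p$-orthogonality of $L^H$ to $M$ then yields $\vartheta^H = \od\<\wh V, M\>^{\p}/\od\<M\>^{\p}$, the predictable covariations being taken under $\p$.

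To prove \eqref{eq:LRM-strategy} I would compute these two covariations. Applying It\^o's formula to $G^*(t, S_t)$ and retaining only the quadratic part, the continuous component pairs $\pd_y G^*(t, S_{t-})\,\sigma S_{t-}$ with the continuous part $\sigma S_{t-}$ of $M$, while the jumps of $\wh V$ and of $S$ are the pathwise quantities $G^*(t, \e^x S_{t-}) - G^*(t, S_{t-})$ and $S_{t-}(\e^x-1)$. As the optional covariation $[\wh V, M]$ and the continuous bracket are invariant under the equivalent change of measure, compensating the jump part with the \emph{original} L\'evy measure $\nu$ gives
\[
\od\<\wh V, M\>^{\p}_t = \Big(\sigma^2 S_{t-}^2\,\pd_y G^*(t, S_{t-}) + S_{t-}\!\int_{\R}\!\big[G^*(t, \e^x S_{t-}) - G^*(t, S_{t-})\big](\e^x-1)\,\nu(\od x)\Big)\,\od t,
\]
together with $\od\<M\>^{\p}_t = S_{t-}^2\,\sn\,\od t$. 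Dividing produces exactly \eqref{eq:LRM-strategy}; that $\nu$ (and not the $\mmm$-L\'evy measure) appears is precisely the effect of the $\p$-compensation. When $\sigma = 0$ the first summand is dropped by convention, and when $\sigma > 0$ the Gaussian component smooths the law of $S_{T-t}$, so that $\pd_y G^*$ exists and differentiation under the $\mme$-integral is legitimate for a merely Borel $g$.

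For the second assertion I would first record the joint regularity of $G^*$ (continuity in $t$, and differentiability in $y$ for $\sigma > 0$ from the same smoothing), which makes $\tilde\vartheta^g_t := \frac{1}{\sn}\big(\sigma^2\pd_y G^*(t, S_t) + S_t^{-1}\int_{\R}[G^*(t, \e^x S_t) - G^*(t, S_t)](\e^x-1)\,\nu(\od x)\big)$ an adapted c\`adl\`ag process with $\tilde\vartheta^g_- = \vartheta^H$ for $\p\otimes\Leb$-a.e. $(\omega,t)$. The martingale property of $\tilde\vartheta^g S$ is the heart of the matter: by Fubini and (for $\sigma>0$) differentiation under the integral one rewrites
\[
\sn\,\tilde\vartheta^g_t S_t = \mme\big[\Theta(y S_{T-t})\big]\big|_{y=S_t}, \qquad \Theta(z) := \sigma^2 z\, g'(z) + \int_{\R}\big[g(\e^x z) - g(z)\big](\e^x-1)\,\nu(\od x),
\]
where the $g'$-term is understood via the smooth $\mmm$-density of $S_{T-t}$ when $\sigma>0$. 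The Markov property used above then identifies the right-hand side with $\cem{t}{\Theta(S_T)}$, so that $\sn\,\tilde\vartheta^g S$, hence $\tilde\vartheta^g S$, is a $\mmm$-martingale closed by $\Theta(S_T)$.

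The steps I expect to be most delicate are, first, checking that the regularity condition of \cite[Theorem 4.3]{CVV10} is met --- this is exactly where \cref{assum:condition-MMM} and the boundedness and determinism of the mean-variance trade-off enter --- and, second, justifying all the integral manipulations (finiteness of $G^*$, Fubini in the jump compensator, and differentiation under the $\mme$-integral) for a general Borel payoff $g$ under only the stated $L_2$-integrability. The smoothing from the Gaussian part when $\sigma > 0$ and the $(\e^x-1)^2$-integrability built into $\sn < \infty$ are the two mechanisms I would rely on to make these exchanges rigorous without imposing extra regularity on $g$ or on the small jumps of $X$.
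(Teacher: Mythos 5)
Your high-level skeleton is the same as the paper's: pass to the minimal martingale measure, identify the value process $\wh V_t=\cem{\F_t}{g(S_T)}=G^*(t,S_t)$ through the L\'evy--Markov structure, and recover $\vartheta^H$ by taking predictable covariations with the $\p$-martingale part $S^{\mar}$ and using the $\p$-orthogonality of $L^H$ (the route through \cite[Theorem 4.3]{CVV10} is also acknowledged by the paper as viable). The genuine gap is in the step you present as a computation: ``applying It\^o's formula to $G^*(t,S_t)$ and retaining only the quadratic part.'' The theorem allows $g$ to be an arbitrary Borel function and imposes no condition on the small jumps of $X$; when $\sigma=0$ the function $G^*(t,\cdot)$ need not be differentiable, or even continuous, and in no case do you have the joint regularity (nor the jump identification $\Delta\wh V_t=G^*(t,\e^{\Delta X_t}S_{t-})-G^*(t,S_{t-})$, which itself requires path regularity of $t\mapsto G^*(t,\cdot)$) that It\^o's formula needs. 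Producing the explicit decomposition of $\wh V$ with \emph{no} regularity on $g$ is precisely the technical heart of the paper: \cref{thm:martingale-representation}, a Clark--Ocone-type formula proved by Malliavin calculus, is applied under $\mmm$ to write $K^*=\wh V$ as explicit stochastic integrals against $W^*$ and $\wt N^*$; only after that identity is available does taking $\<\cdot\,,S^{\mar}\>^{\p}$ become a legitimate calculation. Moreover, even the absolute convergence of the jump integral in \cref{eq:LRM-strategy} and the $\p$-compensation of $[\wh V,S^{\mar}]$ rest on the integrability \cref{eq:integrable-under-P}, which the paper extracts from $\E[K^*,K^*]_T<\infty$ via the reverse H\"older property $(R_2)$ of $\cE(U)$ and \cite[Theorem 4.9]{CKS98}; neither of the two ``mechanisms'' you invoke (Gaussian smoothing, $(\e^x-1)^2$-integrability of $\nu$) delivers this when $\sigma=0$.

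There is a second, independent gap in your argument for part \eqref{item:LRM-strategy-cadlag}: the closing random variable $\Theta(S_T)=\sigma^2S_T\,g'(S_T)+\int_{\R}\big(g(\e^xS_T)-g(S_T)\big)(\e^x-1)\,\nu(\od x)$ is ill-defined in the stated generality. A Borel $g$ has no derivative, and since $g$ carries no modulus of continuity, the integral $\int_{\R}(g(\e^xz)-g(z))(\e^x-1)\nu(\od x)$ need not converge when $\nu$ is infinite near the origin (already for a binary payoff and an infinite-variation $\nu$). The smoothing that makes the paper's formula meaningful sits inside $G^*(t,\cdot)$, i.e.\ strictly before time $T$, and disappears at $t=T$; accordingly, the paper never closes the martingale at $T$. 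Instead it observes that $(G^*(t,\e^xS_t)-G^*(t,S_t))_{t\in[0,T]}$ is a $\mmm$-martingale for each fixed $x$, and (when $\sigma>0$) so is $(S_t\pd_yG^*(t,S_t))_{t\in[0,T)}$, checks via \cref{eq:integrable-under-P}, Cauchy--Schwarz and Fubini (see \cref{eq:integrable-strategy-under-MMM}) that the $\nu$-mixture of these martingales is integrable, concludes that $\frac{1}{\sn}\big(\sigma^2S_t\pd_yG^*(t,S_t)+\int_{\R}(G^*(t,\e^xS_t)-G^*(t,S_t))(\e^x-1)\nu(\od x)\big)$, $t\in[0,T)$, is itself a $\mmm$-martingale, and then takes a c\`adl\`ag modification $\varphi^g$ and sets $\tilde\vartheta^g=\varphi^g/S$. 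To repair your proof you would have to replace the It\^o-formula step by the representation of \cref{thm:martingale-representation} (or an equivalent), and replace the closing argument by this mixture-of-martingales argument; as written, both steps fail exactly in the generality the theorem claims.
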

 According to \cref{theo:LRM-strategy}\eqref{item:LRM-strategy-cadlag}, $\tilde \vartheta^g_{-}$ is also a LRM strategy of $H=g(S_T)$, and one can determine its values at any time point $t \in [0, T)$ with the aid of \eqref{eq:LRM-strategy} and \cref{rema:unique-cadlag-verion-LRM}. Furthermore, the c\`adl\`ag property of $\tilde\vartheta^g$ is useful to design some Riemann-type approximations  for  $\int_0^{T} \tilde\vartheta^g_{t-} \od S_t$. For example, an approximation scheme based on  tracking jumps of $\tilde \vartheta^g$  has been constructed in \cite{RT14}. We also employ this c\`adl\`ag version of the LRM strategy for the discrete-time hedging problem in \cref{sec:Discrete-time hedging}. Such a path regularity  for integrands in the martingale setting was studied in \cite{MPZ01}.

Several formulas resembling  \eqref{eq:LRM-strategy} have been established in \cite[Formula (2.12)]{JMP00}, \cite[Formula (4.1)]{CTV05}, \cite[Formula (45)]{Ta10}, or in \cite[Formula (4.2)]{Th20}. However, in fact, \eqref{eq:LRM-strategy} is different from those. The formulas in \cite{CTV05, JMP00,  Ta10, Th20} were obtained by projecting $H$ orthogonally down to the space of stochastic integrals driven by a (local) martingale, while \eqref{eq:LRM-strategy} is derived from the FS decomposition which is a different orthogonal decomposition in the semimartingale framework. 

\cref{theo:LRM-strategy} is proved in \cref{sec:LRM-strategy}. The main tool for the proof is a martingale representation for functionals of $X_T$ in which the integrands with respect to the Brownian part and the jump part are determined explicitly. Such a martingale representation is established in \cref{thm:martingale-representation} by using Malliavin calculus.

\subsection{Discretization with jump correction method via the weighted BMO-approach}
We continue to investigate the discrete-time approximation problem for stochastic integrals driving by the exponential L\'evy process $S$. Let $E= (E_t)_{t \in [0, T]}$ be the error process given  by
\begin{align*}%\label{eq:error-intro}
E_t: = \int_0^t \tilde\vartheta_{u-} \od S_u -A_t, \quad t\in [0, T],
\end{align*}
where $\tilde\vartheta$ is an admissible integrand and $A = (A_t)_{t\in [0, T]}$ is a discretization approximation for the stochastic integral. In mathematical finance, the stochastic integral can be interpreted as the theoretical hedging portfolio which is continuously readjusted. However, in practice one can only rebalance the portfolio finitely many times, and this fact leads to a discretization of the stochastic integral, represented by $A$.

In the case that $A = A^{\riem}$ is the Riemann approximation, the corresponding error $E = E^{\riem}$  and its convergence rate have been investigated in the $L_2$-sense in several works. When $S$ is assumed to be a martingale, the discretization error along \textit{deterministic} time-nets was examined by, among others,  Brod\'en and Tankov \cite{BT11} and Geiss, Geiss and Laukkarinen \cite{GGL13}. Although the approaches in \cite{BT11} and in \cite{GGL13} are different, both arrived at a result stating that, if the stochastic integral is sufficiently regular, then the convergence rate of the error measured in $L_2$ is of order $n^{-\frac{1}{2}}$ when $n$, which represents the cardinality of used time-nets,  tends to infinity. This obtained rate is shown to be \textit{asymptotically optimal} in their settings. Later, Rosenbaum and Tankov in \cite{RT14} considered a model where the driving process is a purely discontinuous local martingale. The authors shown that if the small jump activity of the semimartingale integrand behaves like an $\alpha$-stable process with $\alpha \in (1, 2)$ and if the employed discretization times-nets are (random) hitting times of a suitably chosen space grid, then one can achieve the convergence rate $n^{-\frac{1}{\alpha}}$, which is significantly better than $n^{-\frac{1}{2}}$ above. Moreover, this rate was obtained under the $L_2$-norm and was proved to be \textit{asymptotically optimal} in that framework.

In fact, the martingale property of the driving process is technically convenient for handling the discretization errors in $L_2$ and it also simplifies the selection of quadratic hedging strategies.  However, the problem of discrete-time hedging is not necessarily considered under the martingale setting. Therefore, the second part of this article is twofold: We examine the discretization errors under the setting of semimartingale with jumps and investigate the convergence rates for the error in $L_p$ for $p \in (2, \infty)$. Those are natural extensions for the martingale- and for the $L_2$-setting and they seem to be still missing in the literature. 

To address those goals, we use the weighted BMO-approach which has been recently exploited in \cite{Th20}. In addition, we employ the discretization method introduced in \cite{Th20}, the so-called \textit{jump correction method} which was constructed by tracking jumps of the driving process $S$. Moreover, we show  how the weighted $\BMO$-approach can   be used to obtain $L_p$-estimates, $p \in (2, \infty)$, for the corresponding error. This approach also allows a change of the underlying measure which leaves the error estimates unchanged provided the change of measure satisfies a reverse H\"older inequality, see \cref{prop:feature-BMO}.    The latter is frequently encountered in mathematical finance, and it is particularly useful here to switch the discretization problem between the martingale setting  and the semimartingale setting.

The main results of this part are \cref{thm:approximation-BMO,thm:approximation-convergence-rate}. We provide in \cref{thm:approximation-BMO} several estimates for the discretization error measured under weighted $\BMO$-norms and describe a situation so that $L_p$-estimates can be achieved for $p \in (2, \infty)$. \cref{thm:approximation-convergence-rate} serves as an application of \cref{thm:approximation-BMO} where we consider the  approximation problem for the stochastic integral term in \eqref{eq:intro:FS-decom} and the chosen integrand is the LRM strategy of a European type option. The results show how the interplay between the regularity of payoff functions and the small jumps intensity of the underlying L\'evy process affects the convergence rate.

Let us list some illustrative examples, which are direct consequences of \cref{thm:approximation-convergence-rate}, showing the convergence rates for $E^{\corr}_g$. Here, $E^{\corr}_g$ denotes the global discretization error resulted from the approximation with jump correction method using the LRM strategy of a European payoff $H = g(S_T)$. To reveal the effect of the small jump activity of $X$, we assume that $X$ does not have a Brownian component and the small jump intensity of $X$ behaves like an $\alpha$-stable process with $\alpha \in (0, 2)$. Then, for the European call/put option (or any Lipschitz $g = g^{\mathrm{Lip}}$) and for the binary option (or any bounded and Borel function $g = g^{\mathrm{bdd}}$), under $\E S_T^p <\infty$ with $p >3$ and some mild  conditions we have
\begin{align*}
\|E^{\corr}_{g^{\mathrm{Lip}}}\|_{L_p} \lee c \begin{cases}
1/n & \mbox{if } \alpha \in (0, 1)\\
(1+ \log n)/n & \mbox{if } \alpha =1\\
1/n^{\frac{1}{\alpha}} & \mbox{if } \alpha \in (1, 2)
\end{cases},\quad \|E^{\corr}_{g^{\mathrm{bdd}}}\|_{L_p} \lee c \begin{cases}
1/n & \mbox{if } \alpha \in (0, 1)\\
(1+ \log n)/n & \mbox{if } \alpha =1\\
1/n^{\frac{1}{\alpha}[1- \frac{1}{\alpha}(\alpha -1)^2]-\delta} & \mbox{if } \alpha \in (1, 2),
\end{cases}
\end{align*}
where the latter case holds for any $0 < \delta < \frac{1}{2}(1- \frac{1}{\alpha})(\frac{2}{\alpha}-1)$. The parameter $n$ in those estimates represents the expected cardinality of the used time-nets. Our results show the convergence rate $n^{-\frac{1}{\alpha}}$ not only for Lipschitz $g^{\mathrm{Lip}}$ but also for certain H\"older functionals $g$. We remark here that the obtained rate $n^{-\frac{1}{\alpha}}$ is consistent with that in \cite[Remark 5 with $\beta = 0$]{RT14}, and moreover, our approach enables the case $\alpha \in (0, 1]$ and the $L_p$-setting which are not treated there.

\subsection{Structure of the article} In \cref{sec:preliminaries}, we introduce the notation, recall Malliavin--Sobolev spaces and exponential L\'evy processes. \cref{sec:martingale-representation} aims to establish a martingale representation with explicit integrands for functionals of a L\'evy process. \cref{sec:LRM-strategy} is devoted to prove \cref{theo:LRM-strategy} above. \cref{sec:Discrete-time hedging} presents the discrete-time hedging problem with the weighted $\BMO$-approach for exponential L\'evy models. \cref{appen:proof:thm:approximation-BMO} provides the proof for main results of \cref{sec:Discrete-time hedging}, and some technical results are given in \cref{appen:technical-results}.

\section{Preliminaries} \label{sec:preliminaries}

\subsection{General notations}
Denote $\R_0 := \R \backslash \{0\}$. For $a, b \in \R$, we set $a\vee b := \max\{a, b\}$ and $a\wedge b : = \min\{a, b\}$ as usual. For $A, B \gee 0$ and $c\gee 1$, by $A\sim_c B$ we mean $A/c \lee B \lee cA$.  Subindexing a symbol by a label indicates the place where that symbol appears (e.g., $c_{\eqref{eq:RH-inequality}}$ refers to formula \eqref{eq:RH-inequality}). 

Let $\cB(\R)$ be the Borel $\sigma$-algebra on $\R$. The Lebesgue measure on $\cB(\R)$ is denoted by $\Leb$, and we also write $\od x$ instead of $\Leb(\od x)$ for simplicity. For $p \in [1, \infty]$ and $A \in \cB(\R)$, the space $L_p(A)$ consists of all $p$-order integrable Borel functions on $A$ with respect to $\Leb$, where the essential supremum is taken when $p=\infty$. For a measure $\mu$ on $\cB(\R)$, its support is given by 
\begin{align*}
\supp\mu : = \{x \in \R : \mu((x -\ep, x+\ep)) >0 ,\forall \ep>0\}.
\end{align*}

Let $(\Omega, \F, \p)$ be a probability space and $\xi \colon \Omega \to \R$ a random variable. Denote by $\p_\xi$ the push-forward measure of $\p$ with respect to $\xi$. If $\xi$ is $\p$-integrable (non-negative), then the (generalized) conditional expectation of $\xi$ given a sub-$\sigma$-algebra $\G\subseteq \F$ is denoted by  $\E^{\p}_{\G}[\xi]$ for which we usually omit the reference measure $\p$ if there is no risk of confusion. Set $L_p(\p) : = L_p(\Omega, \F, \p)$.

For a non-empty and open interval $U \subseteq \R$, let $C^\infty(U)$ denote the family of all functions $f$ which have derivatives of all orders on $U$.

\subsection{Notation for stochastic processes} 
Let $T >0$ be a fixed finite time horizon,  and let $(\Omega, \F, \p)$ be a complete probability space equipped with a right continuous filtration $\bF = (\F_t)_{t \in [0, T]}$. Assume that $\F_0$ is generated by $\p$-null sets only. The conditions imposed on $\bF$ allow us to assume that every martingale adapted to this filtration is \textit{c\`adl\`ag} (right-continuous with left limits).  We use the following notations and conventions where
 \begin{align*}
 \bI = [0, T] \quad \mbox{or} \quad \bI = [0, T).
 \end{align*}
 \begin{itemize}
 	\itemsep0.3em
 	\item For processes $X = (X_t)_{t \in \bI}$ and $Y = (Y_t)_{t \in \bI}$, we write $X = Y$ to indicate that $X_t = Y_t$ for all $t \in \bI$ a.s., and analogously when the relation ``='' is replaced by some other standard relations such as ``$\lee$'', ``$\gee$'', etc.
 	
 	\item For a c\`adl\`ag process $X = (X_t)_{t\in \bI}$, the process $X_- = (X_{t-})_{t\in \bI}$ is defined by setting $X_{0-} : = X_0$ and $X_{t-} : = \lim_{0< s \uparrow t} X_{s}$ for $t \in \bI \backslash\{0\}$. We set $\Delta X: = X - X_{-}$.
 	
 	\item $\CL(\bI)$ denotes the family of all c\`adl\`ag and $\bF$-adapted processes.
 	
 	\item $\CL_0(\bI)$ (resp. $\CL^+(\bI)$) consists of all $X \in \CL(\bI)$ with $X_0 = 0$ a.s. (resp. $X\gee 0$). 
 	
 	\item For $p \in [1, \infty]$ and $X \in \CL([0, T])$, we set $\|X\|_{S_p(\p)}: = \|\sup_{t \in [0, T]}|X_t|\|_{L_p(\p)}$.
 	
 	\item $\cP$ is the predictable $\sigma$-algebra\footnote{$\cP$ is the $\sigma$-algebra generated by $\{A \times \{0\} : A \in \F_0\} \cup \{A \times (s, t] : 0\lee s < t \lee T, A \in \F_s\}$.} on $\Omega \times [0, T]$ and $\wt \cP : = \cP \otimes \cB(\R)$.
 \end{itemize}
 	
 	We recall some notions regarding semimartingales on the finite time interval $[0, T]$.
 	
 	\begin{itemize}
 		\itemsep0.3em
 	\item An $M\in \CL([0, T])$ is called a local (resp. locally square integrable) $\p$-martingale if there is a sequence of non-decreasing stopping times $(\rho_n)_{n\gee 1}$ taking values in $[0, T]$ such that $\p(\rho_n <T) \to 0$ as $n\to \infty$ and the stopped process $M^{\rho_n} = (M_{t\wedge \rho_n})_{t\in [0, T]}$ is a  $\p$-martingale (resp. square integrable $\p$-martingale) for all $n\gee 1$. Let $\cM_2^0(\p)$ be the space of all square integrable $\p$-martingales $M = (M_t)_{t\in [0, T]}$ with $M_0 =0$ a.s. 
 	
 	\item An $S \in \CL([0, T])$ is called a $\p$-semimartingale if $S$ can be written as a sum of a local $\p$-martingale and a process of finite variation a.s. The \textit{quadratic covariation} of two semimartingales $S$ and $R$ is denoted by $[S, R]$.  The predictable $\p$-compensator of $[S, R]$, if it exists, is denoted by $\<S, R\>^\p$.
 	
 	\item Let $M$, $N$ be  locally square integrable $\p$-martingales. Then, $M$ and $N$ are said to be \textit{$\p$-orthogonal} if $[M, N]$ is a local $\p$-martingale, or equivalently, $\<M, N\>^{\p} =0$.
 \end{itemize}

\subsection{L\'evy process and It\^o's chaos expansion} \label{subsection-Levy-process} Let $X=(X_t)_{t \in [0, T]}$ be a real-valued L\'evy process on a complete probability space $(\Omega, \F, \p)$, i.e. $X_0 =0$, $X$ has independent and stationary increments and $X$ has c\`adl\`ag paths. Let $\bF = (\F_t)_{t\in [0, T]}$ denote the augmented natural filtration generated by $X$. Throughout this article, we assume $\F = \F_T$. According to the L\'evy--Khintchine formula (see, e.g., \cite[Theorem 8.1]{Sa13}),  the \textit{characteristic exponent $\psi$} of $X$, which is defined by
$$\E \e^{\im u X_t} = \e^{- t \psi(u)}, \quad u \in \R, t\in [0, T],$$
 is of the form 
\begin{align}\label{charac-exponent}
\psi(u)=- \im \gamma u + \frac{\sigma^2 u^2}{2} - \int_{\R}\(\e^{\im u x}-1-\im ux \1_{\{|x| \lee 1\}}\)\nu(\od x), \quad u \in \R.
\end{align} 
Here, $\gamma \in \R$, while $\sigma \gee 0$ is the coefficient of the Brownian component, and $\nu\colon \cB(\R) \to [0, \infty]$ is a L\'evy measure, i.e., $\nu(\{0\}):=0$ and $\int_{ \R}(x^2 \wedge 1) \nu(\od x) <\infty$. The triplet $(\gamma, \sigma, \nu)$ is called the characteristics of $X$. To indicate explicitly the characteristics of $X$ under $\p$, we write 
\begin{align*}
(X|\p) \sim (\gamma, \sigma, \nu) \quad \mbox{or} \quad (X|\p)\sim \psi.
\end{align*}

We present briefly the Malliavin calculus for L\'evy processes by means of It\^o's chaos expansion which is the main tool to prove \cref{thm:martingale-representation}. For further details, we refer to \cite{Ap09, NN18, NOP09, SUV07b} and the references therein.  Define the $\sigma$-finite measures $\mu$ on $\cB(\R)$ and $\m$ on $\cB([0, T] \times \R)$ by setting
\begin{align*}
\mu(\od x) := \sigma^2 \delta_0(\od x) + x^2 \nu(\od x)\quad \mbox{and} \quad \m := \Leb \otimes \mu,
\end{align*}
where $\delta_0$ is the Dirac measure at zero. For $B\in \cB([0, T] \times \R)$ with $\m(B)<\infty$, the random measure $M$ is defined by
\begin{align*}
M(B): = \sigma \int_{\{t\in [0, T] : (t, 0) \in B\}} \od W_t + L_2(\p)\mbox{-}\lim_{ n \to \infty}\int_{B \cap ([0, T] \times \{\frac{1}{n} <|x| < n\})} x \wt N(\od t, \od x),
\end{align*} 
where $W$ is the standard Brownian motion and $\wt N(\od t, \od x): = N(\od t, \od x) - \od t \nu(\od x)$ is the compensated Poisson random measure appearing in the L\'evy--It\^o decomposition of $X$ (see, e.g., \cite[Theorem 2.4.16]{Ap09}). 
Set $L_2(\mu^{0}) = L_2(\m^{0}) : = \R$. For $n \gee 1$, we denote 
\begin{align*}
L_2(\mu^{\otimes n}) &: = L_2(\R^n, \cB(\R^n), \mu^{\otimes n}),\\ L_2(\m^{\otimes n}) &: = L_2(([0, T] \times \R)^n, \cB(([0, T] \times \R)^n), \m^{\otimes n}).
\end{align*} 
 The multiple integral $I_n \colon L_2(\m^{\otimes n}) \to L_2(\p)$ is defined in the sense of It\^o \cite{It56} by a standard way using approximation, where it is given for simple functions as follows: for $$\xi_n^m: = \sum_{k=1}^m a_k \1_{B_1^k \times \cdots \times B_n^k},$$
  where $a_k \in \R$, $B_i^k \in \cB([0, T] \times \R)$ with $\m(B_i^k)<\infty$ and $B_i^k \cap B_j^k = \emptyset$ for $k = 1,\ldots, m$, $i, j =1, \ldots, n$, $i\neq j$ and $m\gee 1$, we define
  $$I_n(\xi_n^m) : = \sum_{k=1}^m a_k M(B_1^k)\cdots M(B_n^k).$$
   Then, \cite[Theorem 2]{It56} asserts the following It\^o chaos expansion
\begin{align*}
L_2(\p) =  \bigoplus_{n=0}^\infty  \{ I_n(\xi_n) : \xi_n \in L_2(\m^{\otimes n})\},
\end{align*}
where $I_0(\xi_0):= \xi_0 \in \R$. For $n\gee 1$, the symmetrization $\tilde \xi_n$ of a $\xi_n \in L_2(\m^{\otimes n})$ is 
\begin{align*}
\tilde \xi_n((t_1, x_1), \ldots, (t_n, x_n)) := \frac{1}{n!}\sum_{\pi}\xi_n ((t_{\pi(1)}, x_{\pi(1)}), \ldots, (t_{\pi(n)}, x_{\pi(n)})),
\end{align*}
where the sum is taken over all permutations $\pi$ of $\{1, \ldots, n\}$, so that  $I_n(\xi_n) = I_n(\tilde \xi_n)$ a.s.  The It\^o chaos decomposition verifies that $\xi \in L_2(\p)$ if and only if there are $\xi_n \in L_2(\m^{\otimes n})$ such that $\xi =  \sum_{n=0}^\infty I_n(\xi_n)$ a.s., and this  expansion is unique if every $\xi_n$ is symmetric, i.e. $\xi_n = \tilde \xi_n$. 
 Furthermore, $\|\xi\|_{L_2(\p)}^2  = \sum_{n=0}^\infty n! \|\tilde \xi_n\|_{L_2(\m^{\otimes n})}^2$.
 \begin{defi}\label{Malliavin-derivative}
 	Let $\bD_{1,2}$ be the Malliavin--Sobolev space of all $\xi = \sum_{n=0}^\infty I_n(\xi_n) \in L_2(\p)$ such that
 	\begin{align*}
 	\|\xi\|_{\bD_{1,2}}^2 := \sum_{n=0}^\infty (n+1)! \|\tilde \xi_n\|_{L_2(\m^{\otimes n})}^2 <\infty.
 	\end{align*}
 	The \textit{Malliavin derivative operator}  $D \colon \bD_{1,2} \to L_2(\p \otimes \m)$, where $L_2(\p \otimes \m) : = L_2(\Omega \times [0, T] \times \R, \F \otimes \cB([0, T] \times \R), \p \otimes  \m)$, is defined for $\xi = \sum_{n=0}^\infty I_n(\xi_n) \in \bD_{1,2}$ by
 	\begin{align*}
 	D_{t, x} \xi := \sum_{n=1}^\infty n I_{n-1}(\tilde \xi_n((t, x), \cdot)), \quad (\omega, t, x) \in \Omega \times [0, T] \times \R.
 	\end{align*}
 \end{defi}

\subsection{Exponential L\'evy processes} \label{subsec:exponential-levy} Let $X$ be a L\'evy process with $(X|\p) \sim (\gamma, \sigma, \nu)$. The stochastic exponential of $X$, denoted by $\cE(X)$, is the c\`adl\`ag process that satisfies the stochastic differential equation (SDE)
\begin{align*}
\od \cE(X) = \cE(X)_{-}\od X, \quad \cE(X)_0 =1.
\end{align*}
 We apply \cite[Theorem 5.1.6]{Ap09} with the truncation function $x\1_{\{|x| \lee 1\}}$ instead of $x\1_{\{|x| < 1\}}$ to obtain that if $\cE(X)>0$, then there is a L\'evy process $Y$ with $(Y|\p) \sim (\gamma_Y, \sigma_Y, \nu_Y)$ such that $\cE(X) = \e^Y$, where $\sigma_Y = \sigma$ and
 \begin{align*}
 	\nu_Y & = \nu \circ h^{-1} \quad \mbox{for } h(x) := \ln (1 + x),\\
 \gamma_Y & = \gamma - \frac{\sigma^2}{2} + \int_{\R}\(\1_{\{|h(x)|\lee 1\}}h(x) - x \1_{\{|x|\lee 1\}}\)\nu(\od x).
 \end{align*}
Conversely, there is a L\'evy process $Z$ with $(Z|\p) \sim (\gamma_Z, \sigma_Z, \nu_Z)$ such that $\e^X = \cE(Z)$.  Moreover, one has $\sigma_Z = \sigma$ and
\begin{align*}
\nu_Z  &= \nu \circ \tilde h^{-1} \quad \mbox{for } \tilde h(x): = \e^x -1,\\
\gamma_Z & = \gamma + \frac{\sigma^2}{2} + \int_{\R}\(\1_{\{|\tilde h(x)| \lee 1\}} \tilde h(x) -  x\1_{\{|x|\lee 1\}}\)\nu(\od x). 
\end{align*}

%\newpage

\section{Martingale representation with explicit integrands} \label{sec:martingale-representation}

This section is to prove a martingale representation for $f(X_T)$ by using Malliavin calculus. There are two key observations: first, the kernels in the chaos expansion of $f(X_T) \in L_2(\p)$ do not depend on the time variables which implies the Malliavin differentiability of  $\ce{\F_t}{f(X_T)}$ for any $t\in [0, T)$, see \cref{lemm:smooth-conditional-expentation}; secondly, the Malliavin derivative of a functional of $X_t$, provided it is Malliavin differentiable, can be expressed in an explicit form, see \cref{lemm:Malliavin-derivative-functional}.

%Support of a L\'evy process. Support of $(X_t)_{t\in [0, T]}$ is the smallest closed set $\mathscr S_X$ such that $\p(X_t \in \mathscr S_X, \forall t\in [0, T]) = 1$...

%Due to \cref{lemm:sigma>0}\eqref{item:1:derivative-conditional-expectation} below, if $\sigma >0$, then $f \in \scrD(X|\p)$ is automatically implied from $f(X_T) \in L_2(\p)$.

Assume  $(X|\p) \sim (\gamma, \sigma, \nu)$ in this section. We first need the following:

\begin{lemm}[\cite{GN20}, Theorem 9.13(1)]\label{lemm:sigma>0}
	Assume $\sigma > 0$. Let $f\colon \R \to \R$ be a Borel function with $\E|f(X_T)|^q <\infty$ for some $q>1$. Then, $\E|f(x+X_{T-t})| <\infty$ for all $(t, x) \in [0, T] \times \R$, and the function $x \mapsto F(t, x)  : = \E f(x + X_{T-t})$ belongs to $C^\infty(\R)$ for any $t \in [0, T)$. Furthermore,  
			\begin{align*}
			\ce{\F_s}{\pd_x F(t, X_t)} = \pd_x F(s, X_s) \quad \mbox{a.s.}
			\end{align*}
			for any $0\lee s<t<T$.
\end{lemm}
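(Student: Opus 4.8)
The plan is to exploit the smoothing effect of the nondegenerate Gaussian component. First I would record that, since $\sigma>0$, the characteristic exponent in \cref{charac-exponent} satisfies $\re\psi(u)=\frac{\sigma^2u^2}{2}+\int_{\R}(1-\cos(ux))\,\nu(\od x)\gee\frac{\sigma^2u^2}{2}$, so for every $r\in(0,T]$ one has $|\E\e^{\im uX_r}|=\e^{-r\re\psi(u)}\lee\e^{-r\sigma^2u^2/2}$, which is integrable together with all of its polynomial multiples. Fourier inversion then shows that $X_r$ has a density $p_r\in C^\infty(\R)$ whose derivatives
\begin{align*}
p_r^{(k)}(y)=\frac{1}{2\pi}\int_{\R}(-\im u)^k\e^{-\im uy}\,\E\e^{\im uX_r}\,\od u
\end{align*}
are bounded for every $k\gee0$. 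Equivalently, splitting $X_r=\sigma W_r+J_r$ into the Brownian and the independent jump part gives $p_r=\phi_{\sigma^2r}*\p_{J_r}$, whence $p_r^{(k)}(y)=\E\,\phi_{\sigma^2r}^{(k)}(y-J_r)$ with $\phi_{\sigma^2r}$ the centred Gaussian density of variance $\sigma^2r$; this representation is convenient for the tail estimates below.

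Writing $F(t,x)=\int_{\R}f(z)\,p_{T-t}(z-x)\,\od z$ for $t\in[0,T)$, the heart of the matter is the integrability estimate
\begin{align*}
\int_{\R}|f(z)|\,\big|p_{T-t}^{(k)}(z-x)\big|\,\od z<\infty,\qquad k\gee0,
\end{align*}
locally uniformly in $x$. I would prove it by Hölder's inequality with exponents $q$ and $q'=q/(q-1)$ against the reference density $p_T$, reducing matters, since $\int_{\R}|f(z)|^qp_T(z)\,\od z=\E|f(X_T)|^q<\infty$, to
\begin{align*}
\int_{\R}\frac{\big|p_{T-t}^{(k)}(z-x)\big|^{q'}}{p_T(z)^{q'/q}}\,\od z<\infty.
\end{align*}
This is where the hypothesis $q>1$ is essential: the exponent $q'/q=\frac{1}{q-1}$ is strictly positive, so the denominator supplies a genuine gain over the numerator. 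Using the Gaussian representation for an upper bound on $\big|\phi^{(k)}_{\sigma^2(T-t)}\big|$ and a lower bound on $p_T=\phi_{\sigma^2T}*\p_{J_T}$ obtained by restricting the jump part to a bounded set, one checks that the integrand decays at infinity; the decisive points are that $T-t\lee T$ makes the Gaussian in the numerator no wider than the one controlling the denominator, and that the factor $\frac{1}{q-1}$ closes the remaining gap. I expect this tail comparison, carried out uniformly across the various tail regimes of $p_T$, to be the main obstacle.

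Granting the estimate, \emph{Part (1)} is the case $k=0$: for $t\in(0,T)$ it is the above with $r=T-t\in(0,T)$, for $t=0$ it is the extreme case $r=T$, and for $t=T$ it is trivial since $F(T,x)=f(x)$. \emph{Part (2)} then follows by differentiating under the integral sign $k$ times, which the local uniformity legitimises, giving $\pd_x^kF(t,x)=(-1)^k\int_{\R}f(z)\,p_{T-t}^{(k)}(z-x)\,\od z$ and hence $F(t,\cdot)\in C^\infty(\R)$.

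For \emph{Part (3)} I would first establish the semigroup identity $F(s,x)=\E\,F(t,x+X_{t-s})$ for $0\lee s<t<T$ by splitting $X_{T-s}$ into independent increments of lengths $t-s$ and $T-t$, using stationarity, and conditioning on the former. Differentiating this relation in $x$ under the expectation---justified by the domination coming from the same estimate---yields $\pd_xF(s,x)=\E\,\pd_xF(t,x+X_{t-s})$. Finally, combining this with the Markov property and stationarity of $X$ gives, a.s.,
\begin{align*}
\ce{\F_s}{\pd_xF(t,X_t)}=\big(\E\,\pd_xF(t,\,\cdot+X_{t-s})\big)\big|_{\,\cdot\,=X_s}=\pd_xF(s,X_s),
\end{align*}
which is the claim.
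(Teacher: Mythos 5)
Your reduction is a sound skeleton: Fourier inversion does give $p_r\in C^\infty$ with $p_r^{(k)}=\E\,\phi^{(k)}_{\sigma^2 r}(\cdot-J_r)$, the H\"older step is legitimate, and parts (1)--(3) would indeed follow by the routine arguments you describe from the finiteness of
\begin{align*}
\mathrm{I}_k(t,x):=\int_{\R}\frac{\big|p_{T-t}^{(k)}(z-x)\big|^{q'}}{p_T(z)^{q'/q}}\,\od z ,
\end{align*}
which, moreover, is a true statement. (For the record, the paper offers no proof to compare against: it imports the lemma from \cite[Example 8.18(1)]{GN20}.) The genuine gap is that you do not prove $\mathrm{I}_k(t,x)<\infty$, and the method you sketch for it fails for every $\nu\neq0$. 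You propose a Gaussian upper bound on the numerator (via $\phi^{(k)}_{\sigma^2(T-t)}$) against a Gaussian lower bound on the denominator (restricting the jump part of $X_T$ to a bounded set). But the lemma assumes nothing about $\nu$, and the decay of $p^{(k)}_{T-t}(y)=\E\,\phi^{(k)}_{\sigma^2(T-t)}(y-J_{T-t})$ is dictated by the tail of $J_{T-t}$, not by the Gaussian kernel: the Gaussian decay holds only \emph{conditionally} on $J_{T-t}$. Indeed $p_{T-t}(y)\gee\phi_{\sigma^2(T-t)}(1)\,\p\big(J_{T-t}\in[y-1,y+1]\big)$, which decays polynomially for regularly varying $\nu$, and even for a compound Poisson part with bounded jumps the decay is only of order $\e^{-c|y|\log|y|}$ (taking $\nu$ atomic also rules out any cancellation helping the derivatives). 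Since your denominator bound is of order $\e^{-c(|z|+M)^2}$, the majorant produced by your two bounds has a numerator decaying more slowly than any Gaussian over a Gaussian denominator, so it tends to infinity as $|z|\to\infty$; no choice of $q>1$ or of variances repairs this. In particular the assertion that ``$T-t\lee T$ makes the Gaussian in the numerator no wider than the one controlling the denominator'' is vacuous, because unconditionally there is no Gaussian controlling the numerator. As written, your argument is complete only in the case $\nu=0$.

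The missing idea is to make the jump tails cancel between numerator and denominator rather than estimating them separately. Write $J_T=J_{T-t}+\tilde J_t$ with independent summands, so that $p_T(z)=\E\,\Psi(z-J_{T-t})$ with $\Psi:=\E\,\phi_{\sigma^2 T}(\cdot-\tilde J_t)\gee\p(|\tilde J_t|\lee M)\,\phi_{\sigma^2 T}(|\cdot|+M)$; crucially the \emph{same} $J_{T-t}$ now appears in numerator and denominator. For $t\in(0,T)$ the purely Gaussian comparison $\big|\phi^{(k)}_{\sigma^2(T-t)}(u-x)\big|\lee C(k,x,t)\,\Psi(u)$ holds for all $u$ (this is where $T-t<T$ enters), and averaging over $J_{T-t}$ gives the pointwise domination $\big|p^{(k)}_{T-t}(z-x)\big|\lee C\,p_T(z)$; since $q'-q'/q=1$, this yields
\begin{align*}
\frac{\big|p_{T-t}^{(k)}(z-x)\big|^{q'}}{p_T(z)^{q'/q}}\lee C^{q'/q}\,\big|p_{T-t}^{(k)}(z-x)\big|\in L_1(\R),
\end{align*}
so $\mathrm{I}_k(t,x)<\infty$; note $q>1$ is not even needed here. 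Where $q>1$ is truly essential is the case $t=0$, which your parts (1) and (2) also require: there the pointwise domination $p_T(z-x)\lee C\,p_T(z)$ fails already for $\nu=0$, and one can instead apply H\"older \emph{inside} the jump expectation using the shift identity $\phi_a(u-x)=\phi_a(u)\,\e^{(2ux-x^2)/(2a)}$, which gives $p_T(z-x)\lee\e^{(q'-1)x^2/(2\sigma^2 T)}\,p_T(z)^{1/q}\,p_T(z-q'x)^{1/q'}$ and hence $\mathrm{I}_0(0,x)\lee\e^{q'(q'-1)x^2/(2\sigma^2 T)}$ (derivatives are treated the same way after widening the Gaussian by a factor $1+\ep<q$). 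Until the estimate is established along some such lines, parts (1), (2), and the domination invoked in part (3) all remain unproved.
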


\cref{lemm:Malliavin-derivative-functional} below was established in \cite[Corollary 3.1 in the second article of this thesis]{La13} and it provides an equivalent condition such that a functional of $X_t$ belongs to $\bD_{1,2}$. We refer in particular to \cite[Proposition V 2.3.1]{MAKL95} when $X$ is a Brownian motion and refer to \cite[Lemma 3.2]{GS16} when $X$ has no Brownian component.

\begin{lemm}[\cite{La13}] \label{lemm:Malliavin-derivative-functional}
	Let $t \in (0, T]$ and a Borel function $f\colon \R \to \R$ with $f(X_t) \in L_2(\p)$. Then, $f(X_t) \in \bD_{1, 2}$ if and only if the following two assertions hold:
	\begin{enumerate}[\quad \rm (a)]

		\item when $\sigma > 0$, $f$ has a weak derivative\footnote{A locally integrable $h$ is called a \textit{weak derivative} of a locally integrable $f$ on $\R$ if $\int_{\R} f(x) \phi'(x) \od x  = - \int_{\R} h(x) \phi(x) \od x$ for all smooth functions $\phi$ with compact support in $\R$. If such an $h$ exists (unique up to a $\Leb$-null set), then we denote $f'_w: = h$.} $f'_w$ on $\R$ with $f'_w(X_t) \in L_2(\p)$;
		
		\item the map $(s, x) \mapsto \frac{f(X_t + x) - f(X_t)}{x}\1_{[0, t] \times \R_0}(s, x)$ belongs to $L_2(\p\otimes \m)$.
	\end{enumerate}
	Furthermore, if $f(X_t) \in \bD_{1,2}$, then for $\p \otimes \m$-a.e. $(\omega, s, x) \in \Omega \times [0, T]\times \R$ one has
	\begin{align*}
	D_{s, x}f(X_t) = f'_w(X_t)\1_{[0, t] \times \{0\}}(s, x) + \frac{f(X_t + x) - f(X_t)}{x}\1_{[0, t] \times \R_0}(s, x),
	\end{align*}
	where we set, by convention, $f'_w:=0$ when $\sigma =0$.
\end{lemm}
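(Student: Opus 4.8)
The plan is to reduce the whole statement to the single claim that $f(X_t)\in\bD_{1,2}$ if and only if the candidate kernel
\begin{equation*}
\Phi_f(\omega,s,x):=f'_w(X_t)\1_{[0,t]\times\{0\}}(s,x)+\frac{f(X_t+x)-f(X_t)}{x}\1_{[0,t]\times\R_0}(s,x)
\end{equation*}
lies in $L_2(\p\otimes\m)$, in which case $Df(X_t)=\Phi_f$. First I would unpack this $L_2$-membership using the product structure $\m=\Leb\otimes(\sigma^2\delta_0+x^2\nu)$: the contribution of the atom at $x=0$ equals $\sigma^2 t\,\E|f'_w(X_t)|^2$, while the contribution from $x\in\R_0$ equals $t\int_{\R_0}\E|f(X_t+x)-f(X_t)|^2\,\nu(\od x)$, since the weight $x^2$ in $\mu$ cancels the denominator. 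Thus $\Phi_f\in L_2(\p\otimes\m)$ is exactly the conjunction of (a) and (b) (the $x=0$ part being empty when $\sigma=0$, matching the convention $f'_w:=0$), and it remains to prove the equivalence with $\bD_{1,2}$-membership and to identify the derivative.

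For the sufficiency direction I would first verify the formula on a dense, tractable class and then invoke closability of $D$. The natural generators are the exponentials $f_u(x)=\e^{\im u x}$, whose chaos kernels are explicit; a direct computation gives $f_u(X_t)\in\bD_{1,2}$ with $D_{s,x}f_u(X_t)=\im u\,\e^{\im uX_t}\1_{\{x=0\}}+\e^{\im uX_t}\tfrac{\e^{\im ux}-1}{x}\1_{\{x\neq0\}}$ for $s\le t$, which is precisely $\Phi_{f_u}$. Starting from a general $f$ satisfying (a)--(b), I would mollify to obtain smooth $f_n$ with $f_n(X_t)\to f(X_t)$ in $L_2(\p)$ and $\Phi_{f_n}\to\Phi_f$ in $L_2(\p\otimes\m)$: the jump component converges by dominated convergence using the bound supplied by (b), and the Brownian component converges because, when $\sigma>0$, the law $\p_{X_t}$ is absolutely continuous, so mollification controls $f_n'\to f'_w$ tested against this law. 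Closability of $D$ then yields $f(X_t)\in\bD_{1,2}$ with $Df(X_t)=\Phi_f$.

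For the necessity direction, assume $f(X_t)\in\bD_{1,2}$, so that $Df(X_t)\in L_2(\p\otimes\m)$ exists, and identify its two components separately. The jump component is pinned down by the general fact (as in \cite{GS16} for the no-Brownian case) that the $x\neq0$ part of $D$ acts as the add-a-jump difference operator; testing against the multiple integrals $I_n$ forces the $x\neq0$ part of $Df(X_t)$ to equal the difference quotient, which in particular gives (b). For the Brownian component when $\sigma>0$, the cleanest device is to condition on the jump part of $X$: writing $X_t=\sigma W_t+J_t$ with $J_t$ independent of $W$, the existence of the $x=0$ component of $Df(X_t)$ reduces, conditionally on $J_t$, to Malliavin differentiability of $y\mapsto f(y+J_t)$ on one-dimensional Wiener space, which by the classical characterization \cite{MAKL95} forces a weak derivative and delivers $f'_w(X_t)\in L_2(\p)$, that is, (a).

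The hard part will be the necessity direction, specifically the passage from the mere $L_2(\p\otimes\m)$-existence of the Brownian component of $Df(X_t)$ to genuine weak differentiability of $f$ on $\R$. This is where the absolute continuity, indeed smoothness, of the law of $X_t$ for $\sigma>0$ is essential, since it is what transfers an a.s.\ statement about $f'_w(X_t)$ into an analytic statement about the weak derivative $f'_w$; the conditioning on the independent jump part is the mechanism I would use to isolate the Gaussian direction and invoke the one-dimensional Wiener-space integration-by-parts argument. A secondary technical nuisance is the bookkeeping needed to keep the Brownian and jump components of the single operator $D$ decoupled throughout the approximation, which the product structure of $\m$ makes transparent but which still requires care in the dominated-convergence estimates.
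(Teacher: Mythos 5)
First, a remark on the comparison itself: the paper does not prove this lemma --- it is quoted verbatim from \cite{La13} (with \cite{MAKL95} covering the purely Gaussian case and \cite{GS16} the case $\sigma=0$), and those sources argue through Hermite/chaos expansions. Your reduction of (a)+(b) to the membership $\Phi_f\in L_2(\p\otimes\m)$ is correct, and your necessity direction --- identifying the $x\neq 0$ part of $Df(X_t)$ with the add-a-jump difference operator, then conditioning on the independent jump part and invoking the one-dimensional Wiener-space characterization of \cite{MAKL95} for the Gaussian direction --- is exactly the architecture of the cited proofs and is sound, modulo the product-space Fubini facts for $\bD_{1,2}$ that you would in any case import from \cite{GS16}.

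The genuine gap is in your sufficiency direction. Convolution mollification $f_n=f*\rho_n$ is incompatible with the law of $X_t$: the Gaussian factor of that law is not translation-quasi-invariant with bounded density ratios, so neither $f_n(X_t)\in L_2(\p)$ nor $f_n(X_t)\to f(X_t)$ in $L_2(\p)$ is guaranteed; likewise your dominated-convergence claim for $\Phi_{f_n}\to\Phi_f$ has no dominating function, since (b) controls the difference quotients of $f$ under the law of $X_t$ only, not those of $f_n$, whose values sample \emph{translated} laws. Concretely, take $X=\sigma W$ (so $\nu=0$ and (b) is vacuous), set $v:=\sigma^2 t$ and
\begin{equation*}
f(x):=\frac{\e^{x^2/(4v)}}{\sqrt{1+x^4}}\,.
\end{equation*}
Then $\E|f(X_t)|^2=\int_{\R}(2\pi v)^{-1/2}(1+x^4)^{-1}\od x<\infty$ and similarly $f'(X_t)\in L_2(\p)$, so (a) holds and $f(X_t)\in\bD_{1,2}$ by \cite{MAKL95}; but for any standard mollifier one has $f_n(x)\gee c\, f(x)\,\e^{x/(4nv)}$ for all large $x>0$ (use the mass of $\rho_n$ on $[-1/n,-1/(2n)]$ and $f(x+\delta)\gtrsim f(x)\e^{x\delta/(2v)}$), whence
\begin{equation*}
\E|f_n(X_t)|^2\;\gee\; c'\int_{x_0}^{\infty}\frac{\e^{x/(2nv)}}{1+x^4}\,\od x\;=\;\infty \quad\mbox{for every } n.
\end{equation*}
Your approximating sequence therefore never enters $L_2(\p)$, and closability of $D$ cannot be applied; the same mechanism invalidates the convergence of both components of $\Phi_{f_n}$ in general. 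The repair is to smooth with operators adapted to the underlying measure rather than to Lebesgue measure --- on the Gaussian factor, the Ornstein--Uhlenbeck semigroup or a direct Hermite-expansion argument as in \cite{MAKL95} --- or, as in \cite{La13}, to prove sufficiency directly on the chaos expansion $f(X_t)=\sum_{n}I_n(\tilde f_n\1_{[0,t]}^{\otimes n})$, where (a) and (b) translate into the summability $\sum_n (n+1)!\,\|\tilde f_n\1_{[0,t]}^{\otimes n}\|_{L_2(\m^{\otimes n})}^2<\infty$ without ever approximating $f$ by convolutions.
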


\begin{lemm}\label{lemm:smooth-conditional-expentation}
	Let $f\colon \R \to \R$ be a Borel function with  $f(X_T) \in L_2(\p)$.
	\begin{enumerate}[\rm (1)]

		\item \label{item:chaos-expansion-f(XT)} There are symmetric $\tilde f_n \in L_2(\mu^{\otimes n})$ such that $f(X_T) = \sum_{n=0}^\infty I_n(\tilde f_n \1_{[0, T]}^{\otimes n})$ a.s.
		\item \label{item:chaos-expansion-conditional-f(XT)} For $t\in [0, T)$, one has $\ce{\F_t}{f(X_T)} = \sum_{n=0}^\infty I_n(\tilde f_n \1_{[0, t]}^{\otimes n})$ a.s. and $\ce{\F_t}{f(X_T)} \in \bD_{1,2}$.
		\item \label{item:integrability-conditional-expectation} For $t\in (0, T)$, it holds
		\begin{align}\label{eq:integrable-derivative}
		\E \[|\sigma \pd_x F(t, X_t)|^2 + \int_{\R}|F(t, X_t+x) - F(t, X_t)|^2 \nu(\od x)\] < \infty,
		\end{align}
		where  $F(t, x) : = \E f(x + X_{T-t})$ if $\sigma >0$, and in the case $\sigma =0$ we let $F(t, \cdot)$ be a Borel function such that $F(t, X_t) = \ce{\F_t}{f(X_T)}$ a.s. and set $\pd_x F : = 0$.
	\end{enumerate} 
\end{lemm}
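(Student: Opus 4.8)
The plan is to treat the three assertions in turn, with the structural input of part~\eqref{item:chaos-expansion-f(XT)} --- that the chaos kernels of $f(X_T)$ carry their time dependence only through the indicators $\1_{[0,T]}^{\otimes n}$ --- doing the real work. I would first establish \eqref{item:chaos-expansion-f(XT)} for the exponentials $f_u(x) = \e^{\im u x}$, $u \in \R$. Writing $Z_t := \e^{\im u X_t + t\psi(u)}$ for the normalized exponential martingale, one has $\ce{\F_t}{f_u(X_T)} = \e^{-T\psi(u)} Z_t$ and $Z$ solves $\od Z = Z_{-}\,\od U$ with $U_t = I_1(h_u \1_{[0,t]})$, where $h_u(x) := (\e^{\im u x}-1)/x$ for $x \neq 0$ and $h_u(0):=\im u$; a direct check gives $h_u \in L_2(\mu)$ since $\int_{\R}|\e^{\im u x}-1|^2\nu(\od x) < \infty$. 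Expanding the Dol\'eans--Dade exponential $Z_T = \sum_{n} \int_{0<s_1<\cdots<s_n<T} \od U_{s_1}\cdots \od U_{s_n}$ as a series of iterated integrals and symmetrizing (the $n!$ time orderings tile $[0,T]^n$) identifies the $n$-th kernel of $f_u(X_T)$ as $\frac{\e^{-T\psi(u)}}{n!} h_u^{\otimes n}\1_{[0,T]}^{\otimes n}$, which is symmetric and of the asserted form; the identity $\E|f_u(X_T)|^2 = \e^{2T\re\psi(u)}$ confirms the $L_2$-convergence.

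Then I would pass to general $f$ by density. Let $\mathcal{K}_n \subseteq L_2(\m^{\otimes n})$ be the subspace of symmetric kernels of the form $h\1_{[0,T]}^{\otimes n}$ with $h \in L_2(\mu^{\otimes n})$; it is closed, being the isometric image (up to the factor $T^{n/2}$) of the symmetric part of $L_2(\mu^{\otimes n})$. Since $f \mapsto (\xi_n)_n$ is, by the chaos isometry $\|f(X_T)\|_{L_2(\p)}^2 = \sum_n n!\|\xi_n\|_{L_2(\m^{\otimes n})}^2$, a linear isometry, and since $\mathrm{span}\{f_u : u \in \R\}$ is dense in $L_2(\Omega, \sigma(X_T), \p)$, any $f(X_T) \in L_2(\p)$ is an $L_2$-limit of functionals whose kernels lie in $\mathcal{K}_n$; the isometry forces componentwise convergence of the kernels, and closedness of $\mathcal{K}_n$ yields $\xi_n = \tilde f_n \1_{[0,T]}^{\otimes n}$ with symmetric $\tilde f_n \in L_2(\mu^{\otimes n})$, proving \eqref{item:chaos-expansion-f(XT)}. (For real-valued $f$ one works with real linear combinations of $\cos(u\,\cdot)$ and $\sin(u\,\cdot)$, whose kernels are likewise of this form.)

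For \eqref{item:chaos-expansion-conditional-f(XT)}, I would invoke the standard fact that, since $M$ restricted to $[0,t]\times\R$ generates $\F_t$, conditioning truncates kernels: $\ce{\F_t}{I_n(\xi_n)} = I_n(\xi_n \prod_{i}\1_{[0,t]}(s_i))$. Applied to part~\eqref{item:chaos-expansion-f(XT)} and using $\1_{[0,T]}\1_{[0,t]} = \1_{[0,t]}$ for $t \lee T$, together with the $L_2$-continuity of $\ce{\F_t}{\cdot}$, this gives $\ce{\F_t}{f(X_T)} = \sum_n I_n(\tilde f_n \1_{[0,t]}^{\otimes n})$. The membership in $\bD_{1,2}$ is where the restriction $t < T$ enters decisively: writing $a_n := n!\,T^n\|\tilde f_n\|_{L_2(\mu^{\otimes n})}^2$ so that $\sum_n a_n = \|f(X_T)\|_{L_2(\p)}^2 < \infty$, one computes
\begin{align*}
\|\ce{\F_t}{f(X_T)}\|_{\bD_{1,2}}^2 = \sum_{n=0}^\infty (n+1)!\,t^n \|\tilde f_n\|_{L_2(\mu^{\otimes n})}^2 = \sum_{n=0}^\infty (n+1)(t/T)^n a_n \lee \Big(\sup_{n\gee 0}(n+1)(t/T)^n\Big)\sum_{n=0}^\infty a_n,
\end{align*}
which is finite because $t/T < 1$ makes the geometric decay beat the linear growth; this is exactly the mechanism that may fail at $t = T$.

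Finally, for \eqref{item:integrability-conditional-expectation} I would identify $\ce{\F_t}{f(X_T)} = F(t, X_t)$: when $\sigma > 0$, $\E|f(X_T)|^2 < \infty$ lets \cref{lemm:sigma>0} guarantee that $F(t,x) = \E f(x+X_{T-t})$ is finite and smooth and, by independence of increments, equals the conditional expectation; when $\sigma = 0$ this identity holds by the chosen definition of $F(t,\cdot)$. By \eqref{item:chaos-expansion-conditional-f(XT)}, $F(t, X_t) \in \bD_{1,2}$ for $t \in (0,T)$, so \cref{lemm:Malliavin-derivative-functional} applies to $g := F(t,\cdot)$ and $X_t$: its condition (a) yields $\sigma \pd_x F(t,X_t) \in L_2(\p)$ (with $g'_w = \pd_x F(t,\cdot)$ by smoothness), and its condition (b), namely that $(s,x)\mapsto \frac{F(t,X_t+x)-F(t,X_t)}{x}\1_{[0,t]\times\R_0}$ lies in $L_2(\p\otimes\m)$, unwinds --- after integrating out $s$ over $[0,t]$ and absorbing the $x^2$ from $\mu(\od x) = \sigma^2\delta_0(\od x) + x^2\nu(\od x)$ --- to $\E\int_{\R}|F(t,X_t+x)-F(t,X_t)|^2\nu(\od x) < \infty$. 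Adding the two bounds gives \cref{eq:integrable-derivative}. The main obstacle throughout is part~\eqref{item:chaos-expansion-f(XT)}: making the time-homogeneity of the kernels precise, i.e. the exponential-martingale computation together with the closedness/density argument, since for general $f$ the functional $f(X_T)$ need not itself be Malliavin differentiable and the kernels cannot simply be read off from an iterated derivative.
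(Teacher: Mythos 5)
Your treatment of part \eqref{item:integrability-conditional-expectation} is exactly the paper's argument: identify $\ce{\F_t}{f(X_T)}$ with $F(t,X_t)$, use the $\bD_{1,2}$-membership from part \eqref{item:chaos-expansion-conditional-f(XT)}, and unwind conditions (a) and (b) of \cref{lemm:Malliavin-derivative-functional}, with the $x^2$ from $\mu$ cancelling the difference quotient to produce the $\nu$-integral. For parts \eqref{item:chaos-expansion-f(XT)} and \eqref{item:chaos-expansion-conditional-f(XT)}, however, the paper simply cites \cite[Lemma D.1]{GN20}, whereas you reconstruct them from scratch: the Dol\'eans--Dade expansion of the normalized exponential martingales gives time-homogeneous kernels for $\e^{\im u X_T}$; density of trigonometric functionals in $L_2(\Omega,\sigma(X_T),\p)$, the chaos isometry (which forces componentwise convergence of kernels), and closedness of the subspace $\{h\1_{[0,T]}^{\otimes n}: h \in L_2(\mu^{\otimes n}) \mbox{ symmetric}\}$ transfer this form to arbitrary $f(X_T)\in L_2(\p)$; and the kernel-truncation property of conditioning plus the elementary bound $\sup_{n\gee 0}(n+1)(t/T)^n<\infty$ for $t<T$ yield the expansion of $\ce{\F_t}{f(X_T)}$ and its $\bD_{1,2}$-membership. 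This is a legitimate, self-contained alternative: it costs length but removes the dependence on the external reference, and it isolates precisely why the restriction $t<T$ is needed (at $t=T$ the factor $(n+1)$ is no longer damped), a point the paper's citation hides.

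Two small repairs are needed. First, your consistency check ``$\E|f_u(X_T)|^2 = \e^{2T\re\psi(u)}$'' is false as written, since $|f_u(X_T)|=1$ pointwise; the identity you want concerns the normalized martingale $Z_T=\e^{\im u X_T+T\psi(u)}$, for which $\E|Z_T|^2=\e^{2T\re\psi(u)}$, and this matches the chaos side because $\sum_{n} n!\,\|\tfrac{1}{n!}h_u^{\otimes n}\1_{[0,T]}^{\otimes n}\|^2_{L_2(\m^{\otimes n})}=\e^{T\|h_u\|^2_{L_2(\mu)}}$ and $\|h_u\|^2_{L_2(\mu)}=\sigma^2u^2+2\int_{\R}(1-\cos(ux))\,\nu(\od x)=2\re\psi(u)$. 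Second, in part \eqref{item:integrability-conditional-expectation} with $\sigma>0$, passing from ``$(F(t,\cdot))'_w(X_t)\in L_2(\p)$'' to the statement about the classical derivative $\pd_x F(t,X_t)$ uses not only smoothness of $F(t,\cdot)$ from \cref{lemm:sigma>0} (so that $(F(t,\cdot))'_w=\pd_x F(t,\cdot)$ $\Leb$-a.e.) but also that the law of $X_t$ is absolutely continuous with respect to $\Leb$ (true here since $\sigma>0$), because the weak derivative is only an a.e.-equivalence class; the paper makes this step explicit, and your parenthetical ``by smoothness'' should be expanded to include it. Neither point affects the validity of the overall argument.
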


\begin{proof} Items \eqref{item:chaos-expansion-f(XT)} and \eqref{item:chaos-expansion-conditional-f(XT)} are due to  \cite[Lemma D.1]{GN20}. For Item \eqref{item:integrability-conditional-expectation}, it is clear for the case $\sigma =0$ that \eqref{eq:integrable-derivative} is implied by \cref{lemm:Malliavin-derivative-functional}. Let us turn to the case $\sigma >0$. According to \cref{lemm:sigma>0}, one has $F(t, \cdot) \in C^\infty(\R)$, and hence $(F(t, \cdot))'_w = \pd_x F(t, \cdot)$ a.e. with respect to the Lebesgue measure $\Leb$. Since the law of $X_t$ is absolutely continuous with respect to $\Leb$, it holds that $(F(t, \cdot))'_w(X_t) = \pd_x F(t, X_t)$ a.s. Then, \eqref{eq:integrable-derivative} follows from \cref{lemm:Malliavin-derivative-functional}.
\end{proof}

The following result provides a martingale representation for $f(X_T)$ with explicit integrands.

\begin{prop}\label{thm:martingale-representation} Let $f\colon \R \to \R$ be a Borel function such that $\E|f(x+ X_t)| <\infty$ for all $(t, x) \in [0, T] \times \R$. If $f(X_T) \in L_2(\p)$, then 
	\begin{align*}
		\E \int_0^T  |\sigma \pd_x F(t, X_{t-})|^2 \od t + \E \int_0^T\!\!\int_{\R}|F(t, X_{t-}+x) - F(t, X_{t-})|^2 \nu(\od x) \od t <\infty
	\end{align*}
	and, a.s.,
	\begin{align}\label{eq:martingale-representation}
		f(X_T) = \E f(X_T)  + \int_0^T \sigma \pd_x F(t, X_{t-}) \od W_t   + \int_0^T\!\!\int_{\R\backslash\{0\}} (F(t, X_{t-} +x) - F(t, X_{t-})) \wt N(\od t, \od x),
	\end{align}
	where  $F(t, x) : = \E f(x+ X_{T-t})$ for $(t, x) \in [0, T] \times \R$, and we set $\pd_x F :=0$ if $\sigma =0$.
\end{prop}

\begin{proof} For $(t, x) \in [0, T] \times \R$, denote
	\begin{align}\label{eq:defi-Delta-F}
	\cD F(t, x) : = \pd_x F(t, X_{t-}) \1_{\{x =0\}} + \frac{F(t, X_{t-} +x ) - F(t, X_{t-})}{x} \1_{\{x \neq 0\}},
	\end{align}
	where we recall that $\pd_x F:=0$ if $\sigma =0$ by  convention. The assumption $\E|f(x+X_t)|<\infty$ for all $(t, x) \in [0, T] \times \R$ implies that $(F(t, X_t + x) - F(t, X_t))_{t\in [0, T]}$ is a martingale for each $x\in \R$. Moreover, in the case $\sigma >0$, the assumption $f(X_T) \in L_2(\p)$ and \cref{lemm:sigma>0} imply that  $F(t, \cdot) \in C^\infty(\R)$ for all $t\in [0, T)$ and $(\pd_x F(t, X_t))_{t\in [0, T)}$ is a martingale.
	
	\smallskip
	
	\textit{Step 1.} We show that for any $t\in (0, T)$,
	\begin{align*}
	C(t): = \E \int_0^t \!\!\int_{\R} |\cD F(s, x)|^2  \m(\od s, \od x) <\infty.
	\end{align*}
	Note that $(t, x) \mapsto F(t, x)$ is Borel measurable by Fubini's theorem. Since $X_-$ is predictable, it implies that $(\omega, t, x) \mapsto F(t, X_{t-}(\omega) +x)$ is $\wt \cP$-measurable. Hence, $\cD F$ given in \eqref{eq:defi-Delta-F} is $\wt \cP$-measurable. Since $X_s = X_{s-}$ a.s. for each $s\in [0, T]$, using Fubini's theorem and the martingale property, together with \eqref{eq:integrable-derivative}, we obtain for any $t\in (0, T)$ that
	\begin{align*}
	C(t)
	& = \E \int_0^t \!\!  |\sigma \pd_x F(s, X_s)|^2 \od s   + \E \int_0^t\!\!\int_{\R} |F(s, X_s +x) - F(s, X_s)|^2 \nu(\od x) \od s\\
	& \lee t\(\E |\sigma \pd_x F(t, X_t)|^2 + \E \int_{\R}|F(t, X_t +x) - F(t, X_t)|^2 \nu(\od x)\)  <\infty.
	\end{align*}
	Hence, the  stochastic integral $\int_0^t\int_{\R} \cD F(s, x) M(\od s, \od x)$ exists as an element in $L_2(\p)$.
	
	\smallskip
	
	\textit{Step 2.} Fix $t\in (0, T)$. We prove that, a.s., 
	\begin{align}\label{eq:Clark-Ocone-F}
	F(t, X_t)  = \E f(X_T) + \int_0^t\!\!\int_{\R} \cD F(s, x) M(\od s, \od x).
	\end{align}
	The representation \eqref{eq:Clark-Ocone-F} can be regarded as a consequence of the Clark--Ocone formula. However, this formula seems to be considered   either when the L\'evy process $X$ is square integrable or when $X$ has no Brownian component (i.e., $\sigma =0$), see, e.g., \cite{BNLOP03, Lo04, NN18, NOP09, SUV07b}. So, for the reader's convenience, we present here a complete proof for \eqref{eq:Clark-Ocone-F} where neither square integrability nor $\sigma =0$ is assumed. Due to the denseness of the simple multiple stochastic integrals in $L_2(\p)$ (see \cite[Lemma 2.1]{GL11}), in order to obtain \eqref{eq:Clark-Ocone-F} it is sufficient to show that
	\begin{align}\label{eq:test-product}
	\E \[I_m(k_m) F(t, X_t)\]  = \E \[ I_m(k_m) \int_0^t\!\!\int_{\R} \cD F(s, x) M(\od s, \od x)\]
	\end{align}
	for all $m\gee 1$ and all functions $k_m$ of the form
	\begin{align}\label{eq:simple-kernel}
	k_m = \1_{B_1 \times \cdots \times B_m},
	\end{align}
	where $B_i = (s_i, t_i] \times (a_i, b_i]$ in which $(a_i, b_i]$ are finite intervals and the time intervals $(s_i, t_i] \subset [0, t]$ satisfy $t_{i-1}\lee s_i$, $i=2,\ldots, m$.

	Since $F(t, X_t) \in \bD_{1, 2}$ by \cref{lemm:smooth-conditional-expentation}\eqref{item:chaos-expansion-conditional-f(XT)}, applying \cref{lemm:Malliavin-derivative-functional} we have for $\p\otimes \m$-a.e. $(\omega, s, x) \in \Omega \times [0, T]\times \R$,
	\begin{align}\label{eq:malliavin-derivative-formula}
	D_{s,x} F(t, X_t) & = \pd_x F(t, X_t)\1_{[0, t]\times\{0\}}(s, x) + \frac{F(t, X_t + x) - F(t, X_t)}{x}\1_{[0, t]\times \R_0}(s, x).
	\end{align}
	Moreover, for each $(s, x)\in [0, t]\times \R$, the martingale property implies that, a.s., 
	\begin{align*}
	& \ce{\F_s}{\pd_x F(t, X_t)\1_{\{x= 0\}}+ \frac{F(t, X_t + x) - F(t, X_t)}{x}\1_{\{x\neq 0\}}} \\
	& = \pd_x F(s, X_s)\1_{\{x=0\}} + \frac{F(s, X_s + x) - F(s, X_s)}{x}\1_{\{x\neq 0\}}\\
	& = \cD F(s, x),
	\end{align*}
	where the second equality comes from the fact that $X_s = X_{s-}$ a.s.
	
	We let $f(X_T) = \sum_{n=0}^\infty I_n(\tilde f_n \1_{[0, T]}^{\otimes n})$ and $F(t, X_t) = \sum_{n=0}^\infty I_n(\tilde f_n \1_{[0, t]}^{\otimes n})$ as in \cref{lemm:smooth-conditional-expentation}\eqref{item:chaos-expansion-f(XT)} and \eqref{item:chaos-expansion-conditional-f(XT)} respectively, where $\tilde f_n \in L_2(\mu^{\otimes n})$ are symmetric. Let $k_m$ be of the form as in \eqref{eq:simple-kernel}. Since functions $\tilde f_n$ are symmetric, the left-hand side of \eqref{eq:test-product} is  computed as follows
	\begin{align}\label{eq:compute-LHS}
	\mathrm{LHS}_{\eqref{eq:test-product}} = m! \int_{B_1\times \cdots \times B_m} \tilde f_m(x_1, \ldots, x_m) \m(\od s_1, \od x_1)\cdots \m(\od s_m, \od x_m). 
	\end{align}
	For the right-hand side of \eqref{eq:test-product}, writing $I_m(k_m) = \int_{B_m} I_{m-1} (k_{m-1}) M(\od s, \od x)$, where $k_{m-1}: = \1_{B_1\times \cdots \times B_{m-1}}$, and using Fubini's theorem we obtain
	\begin{align}
	& \mathrm{RHS}_{\eqref{eq:test-product}}  = \E \int_{B_m} I_{m-1}(k_{m-1}) \cD F(s, x)  \m(\od s, \od x) \notag \\
	& = \int_{B_m} \E \[I_{m-1}(k_{m-1}) \ce{\F_s}{\pd_x F(t, X_t)\1_{\{x= 0\}}+ \frac{F(t, X_t + x) - F(t, X_t)}{x}\1_{\{x\neq 0\}}}\] \m(\od s, \od x)  \notag \\
	& = \E \int_{B_m} I_{m-1}(k_{m-1}) D_{s,x} F(t, X_t) \m(\od s, \od x) \label{eq:F-s-measurability}\\
	& =  \E \int_{B_m} I_{m-1}(k_{m-1}) \bigg(L_2(\p\otimes \m)\mbox{-}\lim_{j\to \infty} \sum_{i=1}^j i I_{i-1}\bigg(\tilde f_i(\cdot, x) \1_{[0, t]}^{(i-1)} \1_{[0, t]}(s)\bigg)\bigg) \m(\od s, \od x) \notag \\
	& = m\int_{B_m} \E \[I_{m-1}(k_{m-1}) I_{m-1}\(\tilde f_m(\cdot, x) \1_{[0, t]}^{(m-1)}\1_{[0, t]}(s)\)\] \m(\od s, \od x) %\label{eq:interchange-lim-integral-chaos}
	 \notag \\
	& = m! \int_{B_m} \int_{B_1\times \cdots \times B_{m-1}} \tilde f_m(x_1, \ldots, x_{m-1}, x) \m(\od s_1, \od x_1)\cdots \m(\od s_{m-1}, \od x_{m-1}) \m(\od s, \od x). \label{eq:compute-RHS}
	\end{align}
	Here, one uses \eqref{eq:malliavin-derivative-formula} and the fact that  $I_{m-1}(k_{m-1})$ is $\F_s$-measurable for all $s\in (s_m, t_m]$ to obtain \eqref{eq:F-s-measurability}.  Combining \eqref{eq:compute-LHS} with \eqref{eq:compute-RHS} yields \eqref{eq:test-product}.
	
	\smallskip
	
	\textit{Step 3}. For any $t\in (0, T)$, Jensen's inequality implies that $\E|f(X_T)|^2 \gee \E|F(t, X_t)|^2$. Then, we apply \textit{Step 2} and It\^o's isometry to obtain 
	\begin{align*}
	\E|f(X_T)|^2  \gee  |\E f(X_T)|^2 + \E\int_0^t\!\!\int_{\R} |\cD F(s, x)|^2 \m(\od s, \od x).
	\end{align*}
	Letting $t\uparrow T$, we infer that the stochastic integral
	$\int_0^T\int_{\R}\cD F(s, x) M(\od s, \od x)$
	exists as an element in $L_2(\p)$ and equals to $L_2(\p)$-$\lim_{t\uparrow T} \int_0^t\int_{\R} \cD F(s, x) M(\od s, \od x)$. On the other hand, due to the martingale convergence theorem, $F(t, X_t) =\ce{\F_t}{f(X_T)} \to \ce{\F_{T-}}{f(X_T)}$ a.s. and in $L_2(\p)$ as $t\uparrow T$, where $\F_{T-} := \sigma(\cup_{t<T}\F_t)$. Since $(\F_t)_{t\in [0, T]}$ is the augmented natural filtration of the L\'evy process $X$, it holds that $\F_{T-}  = \F_T$, and hence the desired conclusion follows. 
\end{proof}

\begin{rema}
	\cref{thm:martingale-representation}  extends \cite[Proposition 7]{CTV05} in which the function $f$ has a polynomial growth and $X$ satisfies certain conditions. A similar representation to \eqref{eq:martingale-representation} in a general framework (with different assumptions from ours) can be found in the proof of \cite[Theorem 2.4]{JMP00}. On the other hand, when $f(X_T)$ is Malliavin differentiable then one can use the Clark--Ocone formula (see, e.g.,  \cite{AS15, BNLOP03, Lo04}) to obtain its explicit martingale representation. However, the Malliavin differentiability of $f(X_T)$ fails to hold in many contexts. For example, if $f(x) =\1_{[K, \infty)}(x)$ for some $K \in \R$, and if $X$ is of infinite variation and $X_T$ has a density satisfying a mild condition, then $f(X_T)$ is not Malliavin differentiable, see \cite[Theorem 6(b)]{La20}.
	
	The representation \eqref{eq:martingale-representation} is a Clark--Ocone type formula but $f(X_T)$ is not necessarily differentiable in the Malliavin sense. The idea exploiting Malliavin calculus to obtain explicit integrands in the martingale representation of non-differentiable $f(X_T)$ was used in \cite{Pe08} for more general underlying process $X$. However, their results require the presence of the diffusion component of $X$, which in our context means $\sigma >0$.
\end{rema}

\section{Closed form for the local risk-minimizing strategy}
\label{sec:LRM-strategy}

This section gives the proof of \cref{theo:LRM-strategy}. First, let us fix the setting of this section.

\subsection{Setting} \label{sett:section4} Let $S = \e^X$ be the exponential of a L\'evy process $X$ with $(X|\p) \sim (\gamma, \sigma, \nu)$.  Assume that
 $\sigma^2 + \nu(\R) \in (0, \infty]$ and $\int_{x>1} \e^{2x} \nu(\od x) <\infty$.

Condition $\sigma^2 + \nu(\R) >0$ is simply to exclude the degenerate case that $X$ is a.s. deterministic. Since $\int_{x>1} \e^{2x} \nu(\od x) <\infty \Leftrightarrow \int_{|x|>1} \e^{2x} \nu(\od x) <\infty$, it follows from \cite[Theorem 25.3]{Sa13} that $\int_{x>1} \e^{2x} \nu(\od x) <\infty$ is equivalent to the square integrability of $S$. Moreover,  It\^o's formula yields
\begin{align*}
S & = 1+ \(\int_0^{\boldsymbol{\cdot}} \sigma S_{t-} \od W_t + \int_0^{\boldsymbol{\cdot}}\!\! \int_{ \R_0} S_{t-}(\e^x -1) \wt N(\od t, \od x)\) + \gamma_{\eqref{gammaS}} \int_0^{\boldsymbol{\cdot}} S_{t-} \od t  =:  1 + S^{\mar} + S^{\fv},
\end{align*}
where $S^{\mar}$ and $S^{\fv}$ respectively denote the martingale part and the predictable finite variation part in the representation of $S$, and where
\begin{align}\label{gammaS}
\gamma_{\eqref{gammaS}} := \gamma + \frac{\sigma^2}{2} + \int_{\R}(\e^x - 1- x\1_{\{|x|\lee 1\}})\nu(\od x).
\end{align}
Recall from \cref{theo:LRM-strategy} the  notation 
\begin{align*}%\label{eq:constant }
\sn = \sigma^2 + \int_{\R} (\e^x -1)^2 \nu(\od x) \in (0, \infty).
\end{align*}

\subsection{F\"ollmer--Schweizer (FS) decomposition}

We briefly present the FS decomposition of a random variable and the notion of the minimal local martingale measure which is the key tool to determine the FS decomposition. We refer the reader to \cite{Sc01} for  a survey about these objects.

In this article, we follow \cite[p.863]{HKK06} and use the family of \textit{admissible strategies} as
\begin{align}\label{eq:adm-strategies}
\mathcal A(S|\p) := \left\{\vartheta \mbox{ predictable} : \E \int_0^T \vartheta_t^2 S_{t-}^2 \od t <\infty \right\}.
\end{align}
It turns out that if $\vartheta \in \mathcal A(S|\p)$, then 
\begin{align}\label{eq:QV-stochastic-integral}
\E \int_0^T \vartheta^2_t \od [S, S]_t & = \E \int_0^T \vartheta^2_t \od [S^{\mar}, S^{\mar}]_t = \E \int_0^T \vartheta^2_t \od\<S^{\mar}, S^{\mar}\>_t = \sn \E \int_0^T \vartheta_t^2 S_{t-}^2 \od t <\infty.
\end{align}

\begin{defi}[\cite{Sc01}]\label{defi:FS-decomposition} 	\begin{enumerate}[\rm(1)]
		\item An $H \in L_2(\p)$ admits an \textit{FS decomposition} if one can express
		\begin{align*}
		H = H_0 + \int_0^T \vartheta^{H}_t \od S_t + L^{H}_T,
		\end{align*}
		where $H_0 \in \R$, $\vartheta^{H} \in \mathcal A(S|\p)$ and $L^{H} \in \cM_2^0(\p)$ is $\p$-orthogonal to $S^{\mar}$.
		
		\item  The integrand $\vartheta^H$ is called the \textit{local risk-minimizing} strategy of $H$.
	\end{enumerate}
\end{defi}

\begin{rema}\label{rema:SC-condition}
	In our context, $S$ satisfies the \textit{structure condition}, and the \textit{mean-variance trade-off process} $\wh K$ of $S$ in the sense of \cite[p.553]{Sc01} is 
	\begin{align*}
	\wh K_t = \frac{\gamma_{\eqref{gammaS}}^2}{\sn}t,
	\end{align*} 
	which is uniformly bounded in $(\omega, t) \in \Omega \times [0, T]$. Hence, \cite[Theorem 3.4]{MS95} implies that any $H \in L_2(\p)$ admits a unique FS decomposition. 
\end{rema}

%The original definition of LRM strategies is quite technical and it was shown in \cite[Theorem 3.3 and Proposition 3.4]{Sc01} that the LRM strategy of an $H \in L_2(\p)$ can be determined via the FS decomposition of $H$. In fact, the LRM strategy of $H$ is the pair $(\vartheta^H, \eta^H)$ (see \cite[p.553]{Sc01}), where $\vartheta^H$ is the integrand of the integral term in the FS decomposition of $H$, and $\eta^H$ is determined by $\eta^H  = H_0 + \int_0^{\boldsymbol{\cdot}} \vartheta^H_u \od S_u + L^H - \vartheta^H S$. Because $\eta^H$ can be computed by knowing $\vartheta^H$, we identify $\vartheta^H$ with the LRM strategy of $H$.

		We continue with the notion of minimal martingale measure.
		
		\begin{defi}[\cite{Sc95}, Section 2]\label{defi:minimal-martingale-measure}
		Let $\mathcal E(U) \in \CL([0, T])$ be the stochastic exponential of $U$, i.e. $\od \cE(U) = \cE(U)_{-}\od U$ with $\cE(U)_0=1$, where
			\begin{align}\label{eq:MMM-density-U}
			U  = - \frac{\gamma_{\eqref{gammaS}}}{\sn}\(\sigma W + \int_0^{\boldsymbol{\cdot}} \!\!\int_{\R_0}(\e^x -1) \wt N(\od s, \od x)\).
			\end{align}
			If $\cE(U)>0$, then the probability measure $\mmm$ defined by 
			\begin{align*}
			\od \mmm := \mathcal E(U)_T \od \p
			\end{align*}
			is called the \textit{minimal martingale measure} for $S$. 
		\end{defi}

Since $U$ given in \eqref{eq:MMM-density-U} is a L\'evy process and belongs to $\cM_2^0(\p)$, it is known that $\cE(U)$ is also an $L_2(\p)$-martingale (see, e.g., \cite[Ch.V, Theorem 67]{Pr05} or \cite[Lemma 1]{GGL13}). 

We now provide a condition imposed on the characteristics of $X$ such that $\mmm$ exists as a probability measure. Let $(U|\p) \sim (\gamma_U, \sigma_U, \nu_U)$ and denote 
$$\alpha_U(x): = -\frac{\gamma_{\eqref{gammaS}}(\e^x -1)}{\sn}, \quad x \in \R.$$
 Then, it follows from \eqref{eq:MMM-density-U} that 
\begin{align}\label{eq:characteristics-U}
\gamma_U = - \int_{|\alpha_U(x)|>1} \alpha_U(x) \nu(\od x), \quad \sigma_U = \frac{|\gamma_{\eqref{gammaS}}|\sigma}{\sn}, \quad \nu_U = \nu \circ \alpha_U^{-1}.
\end{align}
Because
$$\cE(U) >0 \Leftrightarrow \Delta U >-1 \Leftrightarrow \nu_U((-\infty, -1]) =0 \Leftrightarrow \nu\Big(\Big\{x \in \R : 1 - \tfrac{\gamma_{\eqref{gammaS}}(\e^x -1)}{\sn} \lee 0\Big\}\Big) =0,$$ 
the following assumption ensures the existence of $\mmm$ as a probability measure: 
\begin{assum}\label{assum:condition-MMM} $\nu(\R\backslash A) =0$ where $A: = \Big\{x \in \R : 1 - \frac{\gamma_{\eqref{gammaS}}(\e^x -1)}{\sn} > 0\Big\}$.
\end{assum}
A sufficient condition for \cref{assum:condition-MMM} is 
\begin{align}\label{eq:sufficient-assum:condition-MMM}
	\gamma_{\eqref{gammaS}} (\e^x-1)< \sn, \quad \forall x\in \supp\nu,
\end{align}
and \eqref{eq:sufficient-assum:condition-MMM} is particularly satisfied when
$$0\gee \gamma_{\eqref{gammaS}} \gee  - \sn.$$

Assume that \cref{assum:condition-MMM} holds. Then, an application of Girsanov's theorem (see, e.g.,  \cite[Propositions 2 and 3]{ES05}) asserts that $X$ is a L\'evy process under $\mmm$ with $(X|\mmm) \sim (\gamma^*, \sigma^*, \nu^*)$, where
\begin{align}
\gamma^* &= \gamma -\frac{\gamma_{\eqref{gammaS}}}{\sn}\bigg(\sigma^2 + \int_{|x|\lee 1}x(\e^x -1) \nu(\od x)\bigg), \notag \\
\sigma^* & = \sigma \quad \mbox{and} \quad \nu^* (\od x) = \(1- \frac{\gamma_{\eqref{gammaS}} (\e^x-1)}{\sn}\) \nu(\od x). \label{eq:Levy-measure-MMM}
\end{align}
Moreover, a calculation shows $\int_{|x| >1} \e^x \nu^*(\od x) <\infty$ and $\gamma^* + \frac{\sigma^2}{2} + \int_{\R}(\e^x - 1- x\1_{\{|x|\lee 1\}})\nu^*(\od x) = 0$ which implies that $S = \e^X$ is a $\mmm$-martingale via using It\^o's formula. Let $W^*$ and $\wt N^*$ respectively denote the Brownian motion and the compensated Poisson random measure of $X$ under $\mmm$, then
\begin{align}
W_t^*   = W_t + \frac{\gamma_{\eqref{gammaS}} \sigma}{\sn}t, \quad \wt N^*(\od t, \od x)   = \wt N(\od t, \od x) + \frac{\gamma_{\eqref{gammaS}}}{\sn}(\e^x -1)\nu(\od x) \od t. \label{eq:WN-under-MMM}
\end{align}

In the sequel, let $\mme$ (resp. $\E^*_{\G}$) denote the expectation (resp. conditional expectation given a $\sigma$-algebra $\G \subseteq \F$) with respect to $\mmm$.

\subsection{Proof of \cref{theo:LRM-strategy}} 
Let $f(x): = g(\e^x)$ and $F^*(t, x): = \mme f(x+X_{T-t})$  so that $G^*(t, \e^x) = F^*(t, x)$ for $(t, x) \in [0, T] \times \R$. We define
	\begin{align*}
	\cD_J G^*(t, x) : =  G^*(t, \e^x S_{t-}) - G^*(t, S_{t-}), \quad (t, x) \in [0, T] \times \R.
	\end{align*}

	\eqref{item:LRM-strategy-form} We present here a direct proof for this assertion, an alternative argument in an abstract setting can be found in \cite[Proof of Theorem 4.3]{CVV10}. By assumption, $f(X_T) = g(S_T) \in L_2(\p^*)$ and  $\mme |f(x+X_{t})| = \mme |g(\e^x S_{t})| <\infty$ for any $(t, x) \in [0, T] \times \R$, we apply \cref{thm:martingale-representation} to obtain
	\begin{align}\label{eq:mart-representation-H}
	K^* = \mme g(S_T) + \int_0^{\boldsymbol{\cdot}} \sigma S_{t-}\pd_y G^*(t, S_{t-}) \od W_t^* + \int_0^{\boldsymbol{\cdot}} \!\! \int_{\R_0} \cD_J G^*(t, x) \wt N^*(\od t, \od x),
	\end{align}
	where $K^* = (K^*_t)_{t\in [0, T]}$ denotes a c\`adl\`ag version of the $L_2(\mmm)$-martingale $(\cem{\F_t}{g(S_T)})_{t\in [0, T]}$, and
	where $W^*$ and $\wt N^*$ are introduced in \eqref{eq:WN-under-MMM}. Then, $ \cE(U) K^*$ is a martingale under $\p$. Since the $\p$-martingale $U$ given in \eqref{eq:MMM-density-U} satisfies 
	\begin{align*}
	\|\<U, U\>_T\|_{L_\infty(\p)} = \frac{\gamma_{\eqref{gammaS}}^2 T}{\sn^2}\(\sigma^2 + \int_{\R} (\e^x -1)^2 \nu(\od x)\) <\infty,
	\end{align*}
	it implies that $\cE(U)$ is regular and satisfies $(R_2)$ in the sense of \cite[Proposition 3.7]{CKS98}. Since $K^*_T = g(S_T) \in L_2(\p)$ by assumption, 
we apply \cite[Theorem 4.9((i)$\Leftrightarrow$(ii))]{CKS98} to obtain
\begin{align*}
\E [K^*, K^*]_T <\infty.
\end{align*}
Combining this with \eqref{eq:mart-representation-H} yields
\begin{align*}
\E \int_0^T \sigma^2 |S_{t-} \pd_y G^*(t, S_{t-})|^2 \od t + \E \int_0^T\!\!\int_{\R_0} |\cD_J G^*(t, x)|^2 N(\od t, \od x) = \E [K^*, K^*]_T <\infty.
\end{align*}
Since $\od t \nu(\od x)$ is the predictable $\p$-compensator of $N(\od t, \od x)$, it implies that
\begin{align}\label{eq:integrable-under-P}
\E \int_0^T \sigma^2 |S_{t-} \pd_y G^*(t, S_{t-})|^2 \od t + \E \int_0^T\!\!\int_{\R} |\cD_J G^*(t, x)|^2 \nu(\od x) \od t <\infty.
\end{align}
Using Cauchy--Schwarz's inequality yields
\begin{align}\label{eq:integrable-LRM-strategy-P}
&\E \int_0^T \sigma^2 S_{t-}^2 |\pd_y G^*(t, S_{t-})| \od t  + \E \int_0^T\!\!\int_{\R}|\cD_J G^*(t, x) S_{t-} (\e^x -1)| \nu(\od x) \od t \notag \\
& \lee  \sqrt{\E\int_0^T S_{t-}^2 \od t} \sqrt{\E \int_0^T  |\sigma^2 S_{t-} \pd_y G^*(t, S_{t-})|^2 \od t} \notag \\
& \qquad + \sqrt{\int_{\R} (\e^x -1)^2  \nu(\od x)} \sqrt{\E \int_0^T S_{t-}^2 \od t } \sqrt{\E \int_0^T\!\!\int_{\R}|\cD_J G^*(t, x)|^2 \nu(\od x) \od t} \notag \\
& <\infty.
\end{align}
On the other hand, the FS decomposition of $H = g(S_T)$ is
	\begin{align}\label{eq:FS-decomp-g(S_T)}
	g(S_T) = H_0 + \int_0^T \vartheta^H_t \od S_t + L_T^H
	\end{align}
	where $H_0 \in \R$, $\vartheta^H \in \mathcal A(S|\p)$ and $L^H \in \cM_2^0(\p)$ is $\p$-orthogonal to the martingale component $S^{\mar}$ of $S$. According to \cite[Eq. (3.10)]{Sc01}, it holds that $L^H$ is a local $\mmm$-martingale.  Note that $\int_0^{\boldsymbol{\cdot}} \vartheta^H_t \od S_t$ is also a local $\mmm$-martingale. Using Cauchy--Schwarz's inequality and \eqref{eq:QV-stochastic-integral} yields
	\begin{align*}
	\mme\sqrt{[L^H, L^H]_T} & \lee \|\cE(U)_T\|_{L_2(\p)} \sqrt{\E [L^H, L^H]_T}  <\infty,\\
	\mme \sqrt{\int_0^T |\vartheta^H_t|^2 \od [S, S]_t} & \lee \|\cE(U)_T\|_{L_2(\p)} \sqrt{\E \int_0^T |\vartheta^H_t|^2 \od [S, S]_t}   <\infty.
	\end{align*} 
	Hence, the Burkholder--Davis--Gundy  inequality verifies that both $L^H$ and $\int_0^{\boldsymbol{\cdot}} \vartheta^H_t \od S_t$ are $\mmm$-martingales. Combining \eqref{eq:mart-representation-H} with \eqref{eq:FS-decomp-g(S_T)}, we derive $H_0 = \mme g(S_T)$ and
	\begin{align}\label{eq:identity-for-g(S)}
	\int_0^{\boldsymbol{\cdot}} \vartheta^H_t \od S_t + L^H = \int_0^{\boldsymbol{\cdot}} \sigma S_{t-}\pd_y G^*(t, S_{t-}) \od W_t^* + \int_0^{\boldsymbol{\cdot}} \!\! \int_{\R_0} \cD_J G^*(t, x) \wt N^*(\od t, \od x).
	\end{align}
	Recall that the martingale part of $S$ is $S^{\mar} = \int_0^{\boldsymbol{\cdot}} \sigma S_{t-} \od W_t + \int_0^{\boldsymbol{\cdot}}\int_{\R_0} S_{t-}(\e^x-1)\wt N(\od t, \od x)$. Since $\<L^H, S^{\mar}\>^\p = 0$ by the definition of the FS decomposition, we take the predictable quadratic covariation on both sides of \eqref{eq:identity-for-g(S)} with $S^{\mar}$ under $\p$ and notice that the integrability condition \eqref{eq:integrable-LRM-strategy-P} holds to obtain
	\begin{align*}
	\sn \int_0^{\boldsymbol{\cdot}} \vartheta^H_t S_{t-}^2\od t= \int_0^{\boldsymbol{\cdot}} \sigma^2 S_{t-}^2 \pd_y G^*(t, S_{t-}) \od t + \int_0^{\boldsymbol{\cdot}} \!\! \int_{\R} \cD_J G^*(t, x) S_{t-}(\e^x -1) \nu(\od x) \od t,
	\end{align*}
	which yields \eqref{eq:LRM-strategy} as desired.
	
	\medskip
	\eqref{item:LRM-strategy-cadlag} It follows from Cauchy--Schwarz's inequality and \eqref{eq:integrable-under-P} that
	\begin{align}\label{eq:integrable-strategy-under-MMM}
	& \mme \int_0^T |\sigma^2 S_{t-} \pd_y G^*(t, S_{t-})| \od t + \mme \int_0^T \!\! \int_{\R} |\cD_J G^*(t, x) (\e^x-1)| \nu(\od x) \od t \notag \\
	& \lee \sqrt{T} \|\cE(U)_T\|_{L_2(\p)} \sqrt{\E \int_0^T |\sigma^2 S_{t-} \pd_y G^*(t, S_{t-})|^2 \od t} \notag \\
	& \qquad + \|\cE(U)_T\|_{L_2(\p)} \sqrt{T \int_{\R}(\e^x -1)^2 \nu(\od x)}  \sqrt{\E \int_0^T\!\! \int_{\R}|\cD_J G^*(t, x)|^2 \nu(\od x)\od t} \notag \\
	& < \infty.
	\end{align}
	By assumption, it is clear that $(G^*(t, \e^x S_t) - G^*(t, S_t))_{t\in [0, T]}$ is a $\mmm$-martingale  for each $x\in \R$. In the case $\sigma >0$, due to $g(S_T) \in L_2(\mmm)$ and \cref{lemm:sigma>0}, $(S_t \pd_y G^*(t, S_t))_{t\in [0, T)}$ is also a $\mmm$-martingale.  Hence, the function
	\begin{align*}
	[0, T) \ni t \mapsto \mme |\sigma^2 S_t \pd_yG^*(t, S_t)| + \mme \int_{\R}|G^*(t, \e^x S_t) - G^*(t, S_t)| |\e^x -1|\nu(\od x)
	\end{align*}
	is non-decreasing by the martingale property. In addition, noting that $S_{t-} = S_t$ a.s. for each $t\in [0, T]$, we infer from \eqref{eq:integrable-strategy-under-MMM} and Fubini's theorem that
	\begin{align*}
	\mme |\sigma^2 S_{t} \pd_y G^*(t, S_t)| + \mme \int_{\R} |G^*(t, \e^x S_t) - G^*(t, S_t)||\e^x -1| \nu(\od x) <\infty
	\end{align*} 
	for all $t\in [0, T)$. Therefore,
	\begin{align*}
	 \(\frac{1}{\sn}\(\sigma^2 S_t \pd_y G^*(t, S_t) + \int_{\R}(G^*(t, \e^x S_t) - G^*(t, S_t))(\e^x -1)\nu(\od x)\)\)_{t\in [0, T)}
	\end{align*}
	is a $\mmm$-martingale for which one can find a c\`adl\`ag modification, denoted by $\varphi^g$. Then, the process $\tilde \vartheta^g$ defined by
	\begin{align}\label{eq:defi-cadlag-version-LRM}
	\tilde \vartheta^g : = \varphi^g/S
	\end{align}
	satisfies the desired requirements. 
\qed

\begin{rema}\label{rema:unique-cadlag-verion-LRM}
	Let $\tilde \vartheta \in \CL([0, T))$ such that $\tilde \vartheta = \tilde \vartheta^g$ for $\p \otimes \Leb$-a.e. $(\omega, t) \in \Omega \times [0, T)$, where $\tilde \vartheta^g$ given in \eqref{eq:defi-cadlag-version-LRM}. Then,  $\p(\tilde \vartheta_{t} = \tilde \vartheta^g_{t}, \; \forall t\in [0, T)) =1$ due to the c\`adl\`ag property. Hence, $\tilde \vartheta_{-}$ is also a LRM strategy of $H = g(S_T)$. Moreover, for any $t \in [0, T)$,
	\begin{align*}
	\tilde \vartheta_t = \frac{1}{\sn}\(\sigma^2 S_t \pd_y G^*(t, S_t) + \int_{\R}(G^*(t, \e^x S_t) - G^*(t, S_t))(\e^x -1)\nu(\od x)\)\quad \mbox{a.s.}
	\end{align*}
\end{rema}

\section{Discrete-time hedging via weighted BMO-approach}\label{sec:Discrete-time hedging}

This section is a continuation of the work in \cite{Th20} in the exponential L\'evy models. First, we use the discrete-time approximation for stochastic integrals introduced in \cite{Th20} and investigate the resulting error in weighted $\BMO$ spaces. Consequently, the $L_p$-estimates, $p \in (2, \infty)$, for the error are provided. Secondly, to illustrate the obtained results, the approximated stochastic integrals we consider is the integral term, which can be interpreted as the hedgeable part, in the FS decomposition of a European type option. 

%...The selection of the hedging strategies in the martingale and semimartingale setting can be quite different.  For the option hedging problem, it is not necessary to consider under the martingale measure...

\subsection{Weighted bounded mean oscillation (BMO)} Let $\cS([0, T])$ denote the family of all stopping times $\rho\colon \Omega \to [0, T]$. Set $\inf \emptyset : = \infty$.

\begin{defi}[\cite{Ge05, GN20}]
Let $p\in (0, \infty)$ and $\Phi \in \CL^+([0, T])$.
\begin{enumerate}[\rm(1)]
	\item  Let $\BMO^\Phi_p(\p)$ consist of all $Y \in \CL_0([0, T])$ with $\|Y\|_{\BMO_p^\Phi(\p)} <\infty$, where
	\begin{align*}
	\|Y\|_{\BMO_p^\Phi(\p)}: = \inf\left\{c \gee 0 : \ce{\F_\rho}{|Y_T-Y_{\rho-}|^p} \lee c^p \Phi^p_{\rho} \quad\mbox{a.s., } \forall \rho \in \cS([0, T])\right\}.
	\end{align*}

	\item (\textit{Weight regularity}) We let $\Phi \in \cSM_p(\p)$ if $	\|\Phi\|_{\cSM_p(\p)} <\infty$, where
	\begin{align*}
	\|\Phi\|_{\cSM_p(\p)} : = \inf \left\{\ts c\gee 0: \bce{\F_a}{\sup_{t \in [a, T]} \Phi_t^p} \lee c^p \Phi_{a}^p \quad \mbox{a.s., } \forall a \in [0, T]\right\}.
	\end{align*} 
\end{enumerate}
\end{defi}
The theory of non-weighted $\BMO$-martingales, i.e., when $\Phi \equiv 1$ and $Y$ is a martingale, can be found in \cite[Ch.IV]{Pr05}. Remark that the weighted $\BMO$ spaces above were introduced and discussed in \cite{Ge05} for general c\`adl\`ag processes which are not necessarily martingales. 

\begin{defi}[\cite{Ge05}]\label{defi:RH-space} For $s\in (1, \infty)$, we denote by $\RH_s(\p)$ the family of all probability measures $\bQ$ equivalent to $\p$ such that    $\od \bQ/\od \p =: U \in L_s(\p)$ and there exists a constant $c_{\eqref{eq:RH-inequality}}>0$ such that $U$ satisfies the following \textit{reverse H\"older inequality }
	\begin{align}\label{eq:RH-inequality}
	\E^{\p}_{\F_\rho}[U^s] \lee c_{\eqref{eq:RH-inequality}}^s (\E^{\p}_{\F_\rho}[U])^s \quad \mbox{a.s., } \forall \rho \in \cS([0, T]).
	\end{align}
\end{defi}

We refer the reader to \cite{Ge05, GN20} for further properties of those quantities.  \cref{prop:feature-BMO} below recalls some features of weighted $\BMO$ which are crucial for our applications, and  their proofs can be found in \cite[Proposition A.6]{GN20} and \cite[Proposition 2.5]{Th20}.

\begin{prop}[\cite{GN20, Th20}]\label{prop:feature-BMO} Let $p\in (0, \infty)$ and $\Phi \in \CL^+([0, T])$.
	\begin{enumerate}[\rm (1)]

		\item \label{item:Lp-BMO-estimate} For any $r \in (0, \infty)$, there is a  $c_1= c_1(r, p)>0$ such that $\|\cdot\|_{S_p(\p)} \lee c_1 \|\Phi\|_{S_p(\p)} \|\cdot \|_{\BMO^\Phi_r(\p)}$.
		
	\item \label{item:equivalent-BMO} If $\Phi \in \cSM_p(\p)$, then for any $r\in (0, p]$ there exists a $c_2 = c_2(r, p, \|\Phi\|_{\cSM_p(\p)}) >0$ such that $\|\cdot\|_{\BMO^\Phi_p(\p)} \sim_{c_2} \|\cdot\|_{\BMO^\Phi_r(\p)}$. 
	
	\item \label{item:change-of-measure} If $\bQ \in \RH_s(\p)$ for some $s\in (1, \infty)$ and $\Phi \in \cSM_p(\bQ)$, then there exists a $c_3 = c_3(s, p)>0$ such that $\|\cdot \|_{\BMO^\Phi_p(\bQ)} \lee c_3 \|\cdot\|_{\BMO^\Phi_p(\p)}$.
	\end{enumerate}
\end{prop}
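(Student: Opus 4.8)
The plan is to establish the three assertions in the order \eqref{item:Lp-BMO-estimate}, \eqref{item:equivalent-BMO}, \eqref{item:change-of-measure}, treating the first two as the substantive steps and deriving \eqref{item:change-of-measure} from \eqref{item:equivalent-BMO} together with the reverse H\"older inequality. The guiding observation is that every statement is a pointwise conditional estimate tested against all $\rho \in \cS([0, T])$, so the whole proof reduces to manipulating conditional $L_p$-norms of the increments $Y_T - Y_{\rho-}$ against the weight $\Phi_\rho$. Note in particular that $Y$ is only assumed c\`adl\`ag adapted, not a martingale, so no martingale property of $Y$ itself may be invoked.

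For \eqref{item:Lp-BMO-estimate}, I would first extract the endpoint bound by testing the defining inequality at $\rho = 0$: since $\F_0$ is $\p$-trivial and $Y_0 = 0$, this gives $\ce{}{|Y_T|^p} \lee \|Y\|_{\BMO_p^\Phi(\p)}^p\, \Phi_0^p \lee \|Y\|_{\BMO_p^\Phi(\p)}^p\,\|\Phi\|_{L_p(\p)}^p$. To pass from $Y_T$ to the maximal function $\sup_{t}|Y_t|$, I would run a stopping-time (good-$\lambda$) argument built on the first-passage times $\rho_\lambda := \inf\{t : |Y_t| > \lambda\}$: the value attained at $\rho_\lambda$ is controlled by the increment $Y_T - Y_{\rho_\lambda-}$ (estimated through $\BMO_p^\Phi(\p)$) together with $|Y_T|$, and one then integrates the resulting distributional bound against $\od(\lambda^p)$, keeping the weight in play via $\Phi_{\rho_\lambda} \lee \sup_t \Phi_t$. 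The delicacy here is the presence of jumps, which forces one to separate the predictable left limit $Y_{\rho_\lambda-}$, which $\BMO$ controls, from the post-jump value $Y_{\rho_\lambda}$; a crude first-moment bound alone runs into an $L\log L$ loss on the maximal function, so the full $\BMO_p^\Phi$-control of the increments must be used.

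For \eqref{item:equivalent-BMO}, one inequality is free: by conditional Jensen the map $q \mapsto \|Y\|_{\BMO_q^\Phi(\p)}$ is non-decreasing, so $\|Y\|_{\BMO_r^\Phi(\p)} \lee \|Y\|_{\BMO_p^\Phi(\p)}$ for $r \lee p$. The content is the reverse bound $\|Y\|_{\BMO_p^\Phi(\p)} \lee c_2\, \|Y\|_{\BMO_r^\Phi(\p)}$, which I regard as \emph{the main obstacle} of the whole proposition. The plan is a weighted John--Nirenberg self-improvement: starting from the lower-exponent control of $|Y_T - Y_{\rho-}|$ by $\Phi_\rho$, one iterates a stopping-time decomposition to produce exponential-type conditional tails for $|Y_T - Y_{\rho-}|/\Phi_\rho$, whence all higher conditional moments. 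The weight regularity $\Phi \in \cSM_p(\p)$ enters decisively at each iteration: the estimate $\ce{\F_a}{\sup_{t \in [a, T]}\Phi_t^p} \lee \|\Phi\|_{\cSM_p(\p)}^p\, \Phi_a^p$ is exactly what prevents the weight from degrading as the stopping times advance, so the recursion closes with a constant depending only on $r$, $p$ and $\|\Phi\|_{\cSM_p(\p)}$.

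Finally, for \eqref{item:change-of-measure}, I would combine a change-of-measure identity with the reverse H\"older inequality and then invoke \eqref{item:equivalent-BMO} under $\bQ$. Write $U := \od\bQ/\od\p$, let $s' := s/(s-1)$, fix $\rho \in \cS([0, T])$, and set $r := p/s' \in (0, p]$. By the conditional Bayes formula $\ceq{\F_\rho}{Z} = \ce{\F_\rho}{U Z}/\ce{\F_\rho}{U}$, H\"older's inequality with exponents $(s, s')$, and the reverse H\"older inequality \cref{eq:RH-inequality}, one obtains
\begin{align*}
\ceq{\F_\rho}{|Y_T - Y_{\rho-}|^r}
&\lee \frac{(\ce{\F_\rho}{U^s})^{1/s}}{\ce{\F_\rho}{U}}\,\(\ce{\F_\rho}{|Y_T - Y_{\rho-}|^{r s'}}\)^{1/s'} \\
&\lee c_{\eqref{eq:RH-inequality}}\,\(\ce{\F_\rho}{|Y_T - Y_{\rho-}|^{p}}\)^{1/s'},
\end{align*}
since $r s' = p$. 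As $p/s' = r$, the definition of $\|Y\|_{\BMO_p^\Phi(\p)}$ bounds the right-hand side by $c_{\eqref{eq:RH-inequality}}\,\|Y\|_{\BMO_p^\Phi(\p)}^{r}\,\Phi_\rho^{r}$, so $\|Y\|_{\BMO_r^\Phi(\bQ)} \lee c_{\eqref{eq:RH-inequality}}^{1/r}\,\|Y\|_{\BMO_p^\Phi(\p)}$. Since $\Phi \in \cSM_p(\bQ)$ by hypothesis, applying \eqref{item:equivalent-BMO} \emph{under the measure} $\bQ$ lifts the exponent back from $r$ to $p$, yielding $\|Y\|_{\BMO_p^\Phi(\bQ)} \lee c_2\,\|Y\|_{\BMO_r^\Phi(\bQ)} \lee c_3\,\|Y\|_{\BMO_p^\Phi(\p)}$, the constant $c_3$ being assembled from $c_{\eqref{eq:RH-inequality}}$ and the constant of \eqref{item:equivalent-BMO} (hence depending on $s$, $p$ and $\|\Phi\|_{\cSM_p(\bQ)}$). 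The one point requiring care is that the passage to the small exponent $r$ under $\bQ$ is essential: working directly at exponent $p$ would leave $\|Y\|_{\BMO_{ps'}^\Phi(\p)}$ on the right, which could only be reduced using $\Phi \in \cSM_p(\p)$, a hypothesis not available here.
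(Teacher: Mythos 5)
The paper does not prove \cref{prop:feature-BMO} at all: it imports items \eqref{item:Lp-BMO-estimate}--\eqref{item:equivalent-BMO} from \cite[Proposition A.6]{GN20} and item \eqref{item:change-of-measure} from \cite[Proposition 2.5]{Th20}, so your proposal has to be measured against those references' arguments. Your proof of \eqref{item:change-of-measure} is complete, correct, and in substance identical to the cited one: conditional Bayes, H\"older with exponents $(s,s')$, the reverse H\"older inequality \cref{eq:RH-inequality} to get $\|Y\|_{\BMO^\Phi_{p/s'}(\bQ)} \lee c\, \|Y\|_{\BMO^\Phi_p(\p)}$, and then item \eqref{item:equivalent-BMO} applied \emph{under} $\bQ$ (legitimate, since that item is measure-agnostic and $\Phi \in \cSM_p(\bQ)$ is hypothesised) to climb back from the exponent $p/s'$ to $p$; you also correctly isolate why the detour through the smaller exponent is unavoidable. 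The only caveat is bookkeeping: your constant depends on $c_{\eqref{eq:RH-inequality}}$ and $\|\Phi\|_{\cSM_p(\bQ)}$, which is what any proof along these lines yields; the paper's notation $c_3 = c(s,p)$ suppresses this dependence, and you are right to flag it.

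The gaps are in \eqref{item:Lp-BMO-estimate} and \eqref{item:equivalent-BMO}, which you correctly identify as the substantive content but leave as strategy sketches of what are, in effect, the weighted good-$\lambda$ and weighted John--Nirenberg theorems of \cite{Ge05, GN20}. For \eqref{item:Lp-BMO-estimate} your route (first-passage times plus a good-$\lambda$ integration) is viable, but the two devices that make it work are missing: (i) the event $\{\sup_{t}\Phi_t \lee \delta\lambda\}$ is not $\F_\rho$-measurable, so before applying conditional Chebyshev at a stopping time $\rho$ one must pass to the larger, $\F_\rho$-measurable event $\{\Phi_\rho \lee \delta\lambda\}$; (ii) post-jump values $Y_\sigma$, which the $\BMO$-condition does not see, are controlled by approximating $\sigma$ from the right by $(\sigma + 1/n)\wedge T$ and invoking Fatou's lemma --- you flag the jump issue but do not resolve it. More seriously, in \eqref{item:equivalent-BMO} the pivotal claim that the iteration yields ``exponential-type conditional tails'' of $|Y_T - Y_{\rho-}|/\Phi_\rho$, ``whence all higher conditional moments'', is false under the stated hypothesis: with only $\Phi \in \cSM_p(\p)$ the increment can be comparable to $\sup_{t\in[\rho,T]}\Phi_t$, which conditionally has moments of order $p$ and in general no more, so neither exponential tails nor moments beyond order $p$ are available. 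What the stopping-time iteration actually produces is geometric decay in $k$ of $\p\big(|Y_T - Y_{\rho-}| > k\lambda,\ \sup_{t\in[\rho,T]}\Phi_t \lee \delta\lambda \mid \F_\rho\big)$; one must then integrate this against the conditional $p$-th moment of $\sup_{t\in[\rho,T]}\Phi_t$ furnished by the $\cSM_p$-hypothesis, and the argument closes exactly at exponent $p$, not beyond. This is repairable --- it is precisely what the cited references do --- but as written the decisive step of what you yourself call the main obstacle would fail.
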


\subsection{Setting}\label{subsec-setting-5} We assume \Cref{sett:section4} from now until the end of \cref{sec:Discrete-time hedging}, i.e., $S = \e^X$ is a square integrable exponential L\'evy process under $\p$ with 
\begin{align*}
(X|\p)\sim (\gamma, \sigma, \nu) \quad \mbox{and} \quad 
\sigma^2 + \nu(\R) \in (0, \infty].	
\end{align*}
Then, as aforementioned in \Cref{subsec:exponential-levy}, there is a L\'evy process $Z$ such that $S$ is the stochastic exponential of $Z$, i.e.,
\begin{align}\label{eq:stoch-logarithm}
	\od S_t = S_{t-} \od Z_t, \quad t \in [0, T].
\end{align}
Let $(Z|\p) \sim (\gamma_Z, \sigma, \nu_Z)$. Then, the square integrability of $S$ implies  $\int_{\R} z^2 \nu_Z(\od z) <\infty$.

The stochastic integral we are going to discretize is of the form
\begin{align}\label{eq:approx-integral}
	\int_0^T \tilde\vartheta_{t-} \od S_t  = \int_0^T \tilde\vartheta_{u-} S_{t-} \od Z_t
\end{align}
for suitable $\tilde\vartheta \in \CL([0, T))$\footnote{We use the tilde sign to indicate the c\`adl\`ag property of the applied process.} introduced in \cref{assum:change-measure} below.

\subsection{Discrete-time approximation with jump correction scheme} \label{subsec:approximate-BMO-general} Although the martingale framework is convenient for handling the discrete-time hedging problem as one can simplify the selection of quadratic hedging strategies and reduce some technicalities, it is more natural to consider the problem in the semimartingale framework.  In principle, one can switch the semimartingale setting to the martingale setting by an appropriate change of measure. However, in general, the convergence rates of the discretization error might be slower if one use H\"older's inequality  to switch distributional estimates back to the original semimartingale setting.

The approach we use here is beneficial from the change of measure feature  of weighted BMO where the convergence rates remain the same after a (suitable) change of measure. Here, we do not assume the (local) martingale property under $\p$ for the underlying price process $S$. The idea is as follows: For $\tilde \vartheta$ as in \eqref{eq:approx-integral}, one looks for a suitable equivalent probability measure $\whP \sim \p$ such that the process $M:=\tilde \vartheta S$ is a square integrable martingale (not necessarily uniformly integrable) under $\whP$. We then exploit the $\whP$-orthogonality of increments of $M$, which naturally appear in the discretization of the stochastic integral, to achieve favorable estimates for the approximation error which possibly yields considerably good convergence rates. Here, $S$ is not necessarily a (local) $\whP$-martingale but a $\whP$-semimartingale. One also notices that the obtained convergence rates are still measured under $\whP$. Thanks to \cref{prop:feature-BMO}\eqref{item:change-of-measure}, under satisfiable conditions, one can switch from $\whP$ back to the original measure $\p$ so that the convergence rates under $\p$ remain unchanged. Such a change of measure is in general not unique, and hence, we have a flexibility to choose an appropriate $\whP$ so that the problem is convenient to handle. In our situation considered later, one can choose $\whP$ such that $S$ is also a (local) $\whP$-martingale.

 Let us introduce the main assumption.

\begin{assum}
	\label{assum:change-measure} For $\tilde\vartheta \in \CL([0, T))$, $\theta \in (0, 1]$, an a.s. non-decreasing  $\Theta \in \CL^+([0, T])$ and for a probability measure $\whP$ equivalent to $\p$, we let 
	$$\tilde \vartheta \in \bfA(\theta, \Theta | \whP)$$ if the following conditions hold:
	\begin{enumerate}[\rm (a)]
		\item \label{item:condition-stoch-integral} $\E \int_0^T \tilde \vartheta_t^2 S_t^2 \od t <\infty$.
		
		\item\label{item:equivalent-martingale-measure} $M: = \tilde \vartheta S \in \CL([0, T))$ is an $L_2(\whP)$-martingale.
			
		\item \label{item:growth-condition} \textit{$($Growth condition$)$}  There are constants $c_{\eqref{eq:theta-1}}, c_{\eqref{eq:theta-(0,1)}} >0$, which might depend on $\theta$, such that
		\begin{enumerate}[\rm (i)]
			\item Case $\theta =1$: One has 
			\begin{align}\label{eq:theta-1}
				|\tilde \vartheta_a| \lee c_{\eqref{eq:theta-1}} \Theta_a \quad \mbox{a.s., } \forall a \in [0, T).
			\end{align}
			
			\item Case $\theta \in (0, 1)$: One has
			\begin{align}\label{eq:theta-(0,1)}
				\hbbce{\F_a}{\int_{(a, T)} (T-t)^{- \theta} \tilde \vartheta_t^2 S_t^2 \od t} \lee c_{\eqref{eq:theta-(0,1)}}^2 \Theta_a^2 S_a^2 \quad \mbox{a.s., } \forall a \in [0, T).
			\end{align}
		\end{enumerate}

		\item \label{item:semimartingale-Z-hat-P} $Z$ is an $L_2(\whP)$-semimartingale with the canonical representation
		\begin{align*}
			Z_t = \bar Z_0 + \int_0^t \bar b^Z_s \od s + \sigma W^{\whP}_t + \int_0^t\!\! \int_{\R_0} z (N_Z - \pi_Z^{\whP})(\od z, \od s), \quad t\in [0, T],
		\end{align*}
		where $\bar Z_0 \in \R$,  $W^{\whP}$ is a Brownian motion under $\whP$, $\pi_Z^{\whP}$ is the predictable $\whP$-compensator of the jump measure $N_Z$ of $Z$ such that 
		\begin{align*}
			\pi_Z^{\whP}(\omega, \od z, \od t) = \nu_Z^{\whP} (t, \od z) \od t, \quad (\omega, t, z) \in \Omega \times [0, T] \times \R
		\end{align*}
		where $\nu_Z^{\whP} (t, \cdot)$, $t \in [0, T]$,  are L\'evy measures, and that 
		\begin{align}\label{eq:2nd-moment-hat-mu-Z}
			\ts \nu_{\eqref{eq:2nd-moment-hat-mu-Z}}: = \big\|t \mapsto  \int_{\R_0} z^2  \nu_Z^{\whP}(t, \od z)\big\|_{L_\infty([0, T], \Leb)} < \infty,
		\end{align}
		and  $\bar b^Z$ is a measurable and deterministic function with
		\begin{align}\label{eq:drift-Z-hat-P}
		 b_{\eqref{eq:drift-Z-hat-P}} : = \|\bar b^Z\|_{L_\infty ([0, T], \Leb)} <\infty.
		\end{align}
	\end{enumerate}
\end{assum}
	
Let us briefly comment on the conditions \eqref{item:condition-stoch-integral}--\eqref{item:growth-condition} above:
\begin{itemize}
	\item Since  c\`adl\`ag functions have at most countable many discontinuities, condition \eqref{item:condition-stoch-integral} ensures that $\tilde \vartheta_{-}$ is an admissible integrand, i.e., $\tilde \vartheta_{-} \in \mathcal A(S|\p)$ for $\mathcal A(S|\p)$ given in \eqref{eq:adm-strategies}.
	
	\item Apparently, condition \eqref{item:equivalent-martingale-measure} looks unnatural, however, this is typically satisfied in applications where $\tilde \vartheta_{-}$ is the hedging strategies of  European options derived from the quadratic hedging approach. We clarify this fact in \cref{exam:martinagle-structure} below. 
	
	\item The parameter $\theta \in (0, 1]$ in \eqref{item:growth-condition} is related to the growth property of $\tilde \vartheta$ (relatively to the corresponding weights and measures)  when the time variable approaches $T$. Remark that the bigger $\theta$ is, the less singular $\tilde \vartheta$ has at $T$, which leads the approximated stochastic integral to be more regular. In particular, in the Black--Scholes model where $\tilde \vartheta$ is the delta-hedging strategy of a European payoff, $\theta$ can be interpreted as the \textit{fractional smoothness} of the payoff in the sense of \cite{GT15} in which $\theta = 1$ corresponds to the smoothness of order 1. Therefore, in this article we call the situation where \eqref{item:growth-condition} holds for $\theta = 1$ as \textit{regular regime}. Further discussion on \eqref{item:growth-condition} is given in \cref{rema:parameter-theta}.
\end{itemize}    

\begin{exam}[Exponential L\'evy model]\label{exam:martinagle-structure} Consider a European payoff $g(S_T)$. \begin{enumerate}[\rm (1)]
		\item \label{exam:martinagle} (\textit{MVH and delta hedging in the martingale framework}) Assume that $S$ is a $\p$-martingale. Analogously to the Black--Scholes model, the \textit{delta hedging} strategy in the exponential L\'evy model is defined by $\tilde \vartheta_t = \frac{\pd G}{\pd y} (t, S_t)$ provided that the derivative exists, where $G(t,y) : = \E g(y S_{T-t})$ for $(t, y) \in [0, T] \times (0, \infty)$. Although being not optimal in general, the delta hedging is often used by financial market practitioners. Then, it is shown in \cite[Theorem 4.2]{Th20} that under an integrability condition, both delta hedging and MVH strategies satisfy \cref{assum:change-measure}\eqref{item:equivalent-martingale-measure} with $\whP = \p$.
		
		\item \label{exam:semimartinagle} (\textit{LRM in the semimartingale framework}) Under the assumptions of \cref{theo:LRM-strategy}, if  $\tilde \vartheta^g$ is a c\`adl\`ag modification for the LRM strategy of $g(S_T)$, then \cref{theo:LRM-strategy}\eqref{item:LRM-strategy-cadlag} verifies that $M: = \tilde \vartheta^g S$ is a c\`adl\`ag $\mmm$-martingale. In particular, in the application when $g$ is bounded or H\"older/Lipschitz continuous, then $M$ is square integrable under $\mmm$, see  \cref{thm:approximation-convergence-rate}\eqref{item:2:growth-strategy} below. Hence, \cref{assum:change-measure}\eqref{item:equivalent-martingale-measure}  holds for $\tilde\vartheta^g$ and $\whP = \mmm$.
	\end{enumerate}
\end{exam}

\begin{prop}\label{rema:parameter-theta}
	\begin{enumerate}[\rm (1)]
		\item \label{item:2:rema:parameter-theta} For $0 < \theta' < \theta <1$, one has $\bfA(\theta, \Theta|\whP) \subseteq \bfA(\theta', \Theta|\whP)$.
		
		\item \label{item:1:rema:parameter-theta} If $M = \tilde \vartheta S$ is an $L_2(\whP)$-martingale and $\theta \in (0,1)$, then \eqref{eq:theta-(0,1)} is equivalent to the following system of two inequalities:
		\begin{align}
			   & |\tilde\vartheta_a|  \lee c_{\eqref{item:growth-varphi}} (T-a)^{\frac{\theta -1}{2}} \Theta_a  \quad \mbox{a.s. for all } a \in [0, T);  \label{item:growth-varphi}\\
			   & \hbbce{\F_a}{\int_{(a, T)} (T-t)^{1- \theta} \od \<M\>^{\whP}_t}  \lee c_{\eqref{item:curvature-condition} }^2 \Theta_a^2 S_a^2 \quad \mbox{a.s. for all } a \in [0, T). \label{item:curvature-condition}
		\end{align}
		Here, $c_{\eqref{item:growth-varphi}}$, $c_{\eqref{item:curvature-condition} } >0$ are constants independent of $a$.

		\item\label{item:3:rema:parameter-theta} Assume Items \eqref{item:condition-stoch-integral}, \eqref{item:equivalent-martingale-measure}, \eqref{item:semimartingale-Z-hat-P} in \cref{assum:change-measure}.  If $\Theta S \in \cSM_2(\whP)$ and if there exist a constant $c_{\eqref{eq:upper-weight-function}}>0$ and a measurable function $w\colon [0, T) \to [0, \infty)$ such that
		\begin{align}\label{eq:upper-weight-function}
			|\tilde \vartheta_t| \lee c_{\eqref{eq:upper-weight-function}} w(t) \Theta_t \quad \mbox{a.s., } \forall t\in [0, T)
		\end{align}
	and that $I_w: = \{\delta \in (0, 1] : \int_0^T (T-t)^{- \delta} w(t)^2 \od t <\infty\} \neq \emptyset$. Then $\tilde \vartheta \in \cap_{0 < \theta'<\theta}\bfA(\theta', \Theta|\whP)$ for $\theta : = \sup I_w$.
	\end{enumerate}
\end{prop}

\begin{proof}
\eqref{item:2:rema:parameter-theta} is obvious. \eqref{item:1:rema:parameter-theta} Let $\theta \in (0,1)$ and assume that $M = \tilde \vartheta S$ is an $L_2(\whP)$-martingale. For $a\in [0, T)$, by the orthogonality and Fubini's theorem, we get, a.s., 
\begin{align*}
	\hbbce{\F_a}{\int_{(a, T)} (T-t)^{- \theta} \tilde \vartheta_t^2 S_t^2 \od t} & = \hbbce{\F_a}{\int_{(a, T)} (T-t)^{- \theta} (M_t - M_a)^2 \od t} + M_a^2 \int_a^T (T-t)^{-\theta} \od t \\
	& =  \hbbce{\F_a}{\int_{(a, T)} (T-t)^{- \theta} \int_{(a, t]} \od \<M\>^{\whP}_u \od t} + M_a^2 \frac{(T-a)^{1- \theta}}{1- \theta}\\
	& = \frac{1}{1- \theta} \bigg[ \hbbce{\F_a}{\int_{(a, T)} (T-u)^{1 - \theta} \od \<M\>^{\whP}_u} + (T-a)^{1- \theta} M_a^2 \bigg],
\end{align*}
which yields the equivalence as desired.

\eqref{item:3:rema:parameter-theta} For any $0 < \theta' < \theta = \sup I_w$, there exits $\delta \in (\theta', \theta]$ such that
$\int_0^T (T-t)^{-\delta} w(t)^2 \od t<\infty$. Since $\Theta S \in \cSM_2(\whP)$, it holds that, for $a\in [0, T)$, a.s., 
\begin{align*}
	\hbbce{\F_a}{\int_{(a, T)} (T-t)^{- \theta'} \tilde \vartheta_t^2 S_t^2 \od t} & \lee c_{\eqref{eq:upper-weight-function}}^2 \|\Theta S\|_{\cSM_2(\whP)}^2 \Theta_a^2 S_a^2 \int_a^T (T-t)^{-\theta'} w(t)^2 \od t\\
	& \lee  \bigg[c_{\eqref{eq:upper-weight-function}}^2 \|\Theta S\|_{\cSM_2(\whP)}^2 T^{\delta - \theta'} \int_0^T (T-t)^{-\delta} w(t)^2 \od t \bigg]\Theta_a^2 S_a^2,
\end{align*} 
which then implies $\ttt \in \bfA(\theta', \Theta | \whP)$.
\end{proof}

\subsubsection{Deterministic discretization times} Let $\cT_{\det}$ denote the family of all \textit{deterministic} time-nets $\tau = (t_i)_{i=0}^n$, $0=t_0<t_1<\cdots<t_n =T$, $n\gee 1$. The mesh size of $\tau = (t_i)_{i=0}^n \in \cT_{\det}$ associated with a parameter $\theta \in (0, 1]$ is defined by
\begin{align*}
	\|\tau\|_{\theta} : = \max_{i =1, \ldots, n} \frac{t_i - t_{i-1}}{(T-t_{i-1})^{1- \theta}} = \max_{i =1, \ldots, n} \; \sup_{a \in [t_{i-1}, t_i)}\frac{t_i - a}{(T- a)^{1- \theta}}.
\end{align*}
Let $\tau_n \in \cT_{\det}$ with $\# \tau_n = n+1$. By a short calculation we find that $\|\tau_n\|_\theta \gee T^\theta/n$. Minimizing $\|\tau_n\|_\theta$ over $\tau_n \in \cT_{\det}$ with $\#\tau = n+1$ leads us to the following time-nets, which were exploited in \cite{GGL13, Ge02, Ge05, GN20, Th20}: For $\theta \in (0, 1]$ and $n\gee 1$, the \textit{adapted time-net} $\tau^\theta_n = (t^\theta_{i, n})_{i=0}^n$ is given by
\begin{align}\label{eq:adapted-time-net}
	t^\theta_{i, n} : = T  - T \big(1-\tfrac{i}{n}\big)^{1/\theta}, \quad i =1, \ldots, n, \quad\mbox{and satisfies } \tfrac{T^\theta}{n} \lee \|\tau_n^\theta\|_\theta \lee \tfrac{T^\theta}{\theta n}, \quad n \gee 1.
\end{align}

\subsubsection{Approximation with jump correction} We recall from \cite{Th20} the discrete-time approximation with the jump correction method. Let $\tilde\vartheta \in \CL([0, T))$ such that $\E\int_0^T \tilde\vartheta_t^2 S_t^2\od t <\infty$.
The Riemann approximation $A^{\riem}(\tilde \vartheta, \tau)$ of $\int_0^T \tilde\vartheta_{t-} \od S_t$ along with the time-net $\tau = (t_i)_{i=0}^n\in \cT_{\det}$ is given by
\begin{align*}
	A^{\riem}_t(\tilde\vartheta, \tau) : = \sum_{i=1}^n \tilde \vartheta_{t_{i-1}- } (S_{t_i \wedge t} - S_{t_{i-1} \wedge t}), \quad t\in [0, T].
\end{align*}

The following random times are employed to capture the relatively large jumps of $S$: for $\ep>0$ and $\kappa \gee 0$, we define  $\rho(\ep, \kappa) = (\rho_i(\ep, \kappa))_{i\gee 0}$ by setting $\rho_0(\ep, \kappa): = 0$ and 
\begin{align*}
	\rho_i(\ep, \kappa) : = \inf\{T \gee t > \rho_{i-1}(\ep, \kappa) : |\Delta S_t| >  \ep (T-t)^{\kappa} S_{t-}\} \wedge T,\quad  i \gee 1.
\end{align*}
Since $Z$ is c\`adl\`ag and the considered filtration satisfies the usual conditions, it holds that $\rho_i(\ep, \kappa)$ are stopping times. The quantity $\ep(T-t)^\kappa$ is the threshold at the time $t$ from which we decide which jumps of $S$ are (relatively) large. If we normalize $T=1$ and let $t = 0$, then $\ep>0$ can be regarded as the \textit{initial jump size threshold}. Moreover, for $\kappa>0$, this threshold continuously shrinks as $t \uparrow T$, and thus the parameter $\kappa$ indices the \textit{jump size decay rate}.  The reason for using the decay function $\ep(T-t)^\kappa$ is to compensate the growth of integrands.
\begin{defi}[\cite{Th20}]\label{def:approx-correction}
	Let $\ep >0$, $\kappa \in [0, \frac{1}{2})$ and $\tau = (t_i)_{i =0}^n \in \cT_{\det}$. 
	\begin{enumerate}[\rm (1)]
		\item Let $\comb{\tau}{\rho(\ep, \kappa)}$ denote the \textit{combined time-net}  derived from combining $\tau$ with $\rho(\ep, \kappa)$ and re-ordering their time-knots.
		
		\item For $t \in [0, T]$, we define
		\begin{align*}
			\tilde\vartheta^\tau_t  &: = \sum_{i=1}^n \tilde\vartheta_{t_{i-1}-} \1_{(t_{i-1}, t_i]}(t),\\
			A^{\corr}_t(\tilde\vartheta, \tau | \ep, \kappa) &: = A^{\riem}_t(\tilde\vartheta, \tau) + \sum_{\rho_i(\ep, \kappa) \in [0, t] \cap [0, T)} \(\tilde\vartheta_{\rho_i(\ep, \kappa)-} - \tilde\vartheta^\tau_{\rho_i(\ep, \kappa)}\) \Delta S_{\rho_i(\ep, \kappa)},\\
			E^{\corr}_t(\tilde\vartheta, \tau | \ep, \kappa) &: = \int_0^t \tilde\vartheta_{u-} \od S_u - A^{\corr}_t(\tilde\vartheta, \tau | \ep, \kappa).
		\end{align*}
	\end{enumerate}
\end{defi}

The number of random jump times is denoted by 
\begin{align*}
	\cN(\ep, \kappa): = \inf\{ i \gee 1: \rho_i(\ep, \kappa) = T\}.
\end{align*}
Under the condition \eqref{eq:2nd-moment-hat-mu-Z}, we apply \cref{lem:cardinality-quantify} with $\alpha = 2$ and $\ep_n = \ep$ to conclude that $\cN(\ep, \kappa) < \infty$ a.s. for any $\ep >0$ and $\kappa \in [0, \frac{1}{2})$. Hence, the sum in the definition of $A^{\corr}(\tilde\vartheta, \tau | \ep, \kappa)$ is a finite sum a.s. Then, we adjust this sum on a set of probability zero so that one may assume that $A^{\corr}(\tilde\vartheta, \tau | \ep, \kappa)$ and $E^{\corr}(\tilde\vartheta, \tau | \ep, \kappa)$ belong to $\CL_0([0, T])$.

\subsubsection{A general approximation result}

There are three factors which affect the approximation rates: the presence of the diffusion component of $Z$, the small jump activity of the underlying L\'evy process, and the growth property of the integrand which is described by $\theta$.

\begin{theo}\label{thm:approximation-BMO}
	Let $\theta \in (0, 1]$, $\kappa:= \frac{1- \theta}{2}$ and $\tilde \vartheta \in \mathbf A(\theta, \Theta | \whP)$. Let $\Phi \in \CL^+([0, T])$ such that
		\begin{align*}
		\max\{\Theta_t S_t, \Theta_{t-}S_{t-}\} \lee \Phi_t, \quad \forall t \in [0, T].
			\end{align*}
	Assume that $\Phi \in  \cSM_2(\whP)$ and that
	\begin{align}\label{eq:small-jump-conditino}
	\ts c_{\eqref{eq:small-jump-conditino}}(\alpha): = 	\sup_{r \in (0, 1)} r^\alpha                                                                                                \big\|t \mapsto  \int_{r < |z| \lee 1} \nu_Z^{\whP}(t, \od z) \big\|_{L_\infty([0, T], \Leb)} < \infty \quad \mbox{for some } \alpha \in (0, 2].
	\end{align}
Then the following assertions hold:
	\begin{enumerate}[\rm(1)]
		\item\label{item:thm:rate-whP} For $\mathcal X = \BMO_2^{\Phi}(\whP)$, one has
\begin{align}\label{eq:thm:rate-whP}
	\sup_{\ep>0, \,\tau \in \cT_{\det}} \frac{ \big\|E^{\corr} (\tilde \vartheta, \tau \,\big|\, \ep, \kappa) \big\|_{\mathcal X}}{\max\big\{\sigma\sqrt{\|\tau\|_\theta}, H(\theta, \alpha, \ep, \|\tau\|_\theta)\big\}} <\infty,
\end{align}
	where 
	\begin{align*}
		H(\theta, \alpha, \ep, \|\tau\|_\theta) : = \begin{cases}
			\max\Big\{\ep,\, \ep^{1- \frac{\alpha}{2}} \sqrt{\|\tau\|_\theta},\, \|\tau\|_\theta + \ep^{1- \alpha} \|\tau\|_\theta^{1- \kappa(\alpha-1)}\Big\} & \mbox{ if } \alpha \in (1, 2]\\[3pt]
			\max\Big\{\ep,\, \sqrt{\ep} \sqrt{\|\tau\|_\theta},\, \big[1 + \log^+(\frac{1}{\ep}) + \log^+\big(\frac{1}{\|\tau\|_\theta}\big)\big] \|\tau\|_\theta\Big\} & \mbox{ if } \alpha =1\\[3pt]
			\max\Big\{\ep, \, \sqrt{\ep}\sqrt{\|\tau\|_\theta}, \, \|\tau\|_\theta\Big\} & \mbox{ if } \alpha \in (0, 1).
	\end{cases}
	\end{align*}

	\item \label{eq:thm:Sp-whP-estimate} ($S_p(\whP)$-estimates) If $\Phi \in S_p(\whP)$ for some $p \in (2, \infty)$, then \eqref{eq:thm:rate-whP}  holds for $\mathcal X = S_p(\whP)$.
	
	\item \label{eq:thm:P-estimate} (Switch back to $\p$) If $\p \in \RH_s(\whP)$  and $\Phi \in \cSM_r(\p)$ for some $s\in (1, \infty)$, $r \in (0, 2]$, then  \eqref{eq:thm:rate-whP} holds for $\mathcal X = \BMO^\Phi_r(\p)$.
	
	\item \label{eq:thm:Sp-P-estimate} ($S_p(\p)$-estimates) If $\p \in \RH_s(\whP)$  and $\Phi \in \cSM_r(\p) \cap S_p(\p)$ for some $s\in (1, \infty)$, $r \in (0, 2]$, $p \in [r, \infty)$, then  \eqref{eq:thm:rate-whP} holds for $\mathcal X = S_p(\p)$.
	\end{enumerate}
\end{theo}

\begin{proof} The idea for the proof of Item \eqref{item:thm:rate-whP} follows closely \cite[Proofs of Theorems 3.11 and 3.15]{Th20}. Hence, we provide in \cref{appen:proof:thm:approximation-BMO} the essential steps of the proof. Items \eqref{eq:thm:Sp-whP-estimate}, \eqref{eq:thm:P-estimate} and \eqref{eq:thm:Sp-P-estimate} follow from applying \cref{prop:feature-BMO}(Items \ref{item:Lp-BMO-estimate} and \ref{item:change-of-measure}).
\end{proof}

A minimization procedure for the function $H(\theta, \alpha, \ep, \|\tau\|_\theta)$ over $\ep>0$ leads to the selection
\begin{align*}
	\ep = \ep(\theta, \alpha, \|\tau\|_\theta): = \begin{cases}
		\|\tau\|_\theta^{\frac{1}{\alpha}(1- \kappa(\alpha-1))} & \mbox{ if } \alpha \in [1, 2]\\
		\|\tau\|_\theta & \mbox{ if } \alpha \in (0, 1).
	\end{cases}
\end{align*}
Continuing to minimize $H(\theta, \alpha, \ep(\theta, \alpha, \|\tau\|_\theta), \|\tau\|_\theta)$ over $\tau \in \cT_{\det}$ with $\# \tau = n+1$ yields to the adapted time-net $\tau = \tau^\theta_n$. Therefore, we arrive at the following corollary.

\begin{coro}[Convergence rates in terms of discretization cardinality]\label{coro:convergence-rate} Assume the assumptions of \cref{thm:approximation-BMO}.
	\begin{enumerate}[\rm (1)]
		\item \label{item:coro:convergence-rate} For  $\mathcal X = \BMO_2^\Phi(\whP)$ one has
		\begin{align}\label{eq:coro:convergence-rates}
			\sup_{n \gee 1}  R(n)\big\|E^{\corr}\big(\tilde \vartheta, \tau^\theta_n\,|\, \ep_n, \tfrac{1- \theta}{2}\big)\big\|_{\mathcal X} < \infty,
		\end{align}
		where the rate of convergence $R(n)$ and the initial jump size threshold $\ep_n$ are provided depending on $\sigma$ as follows:
		\begin{align}%\label{eq:R-and-epsilon}
		&\mbox{when } \sigma = 0 \mbox{ one has }	\begin{cases}
				R(n) = 1/\ep_n = n^{\frac{1}{\alpha}(1- \frac{1}{2}(1- \theta)(\alpha-1))} & \mbox{ if } \alpha \in (1, 2]\\
				R(n) = n/(1+ \log n), \ep_n = 1/n & \mbox{ if } \alpha = 1\\
				R(n) = 1/\ep_n = n & \mbox{ if } \alpha \in (0, 1),		
			\end{cases} \label{item:coro:convergence-rate-sigma=0}\\
			&\mbox{when } \sigma > 0 \mbox{ one has }	\begin{cases}
			R(n) = 1/\ep_n = n^{\frac{1}{\alpha}(1- \frac{1}{2}(1- \theta) (\alpha-1))} & \mbox{ if } \alpha \in (\frac{3 - \theta}{2- \theta}, 2]\\[3pt]
			R(n) = 1/\ep_n = n^{\frac{1}{2}} & \mbox{ if } \alpha  \in (0, \frac{3 - \theta}{2- \theta}]. 		
		\end{cases} \label{item:coro:convergence-rate-sigma>0}	
	\end{align}
		
		\item \label{item:coro:Sp-whP-estimate} ($S_p(\whP)$-estimates) If $\Phi \in S_p(\whP)$ for some $p \in (2, \infty)$, then \eqref{eq:coro:convergence-rates}  holds for $\mathcal X = S_p(\whP)$.
		
		\item \label{item:coro:P-estimate} (Switch back to $\p$) If $\p \in \RH_s(\whP)$  and $\Phi \in \cSM_r(\p)$ for some $s\in (1, \infty)$, $r \in (0, 2]$, then  \eqref{eq:coro:convergence-rates} holds for $\mathcal X = \BMO^\Phi_r(\p)$.
		
		\item \label{item:coro:Sp-P-estimate} ($S_p(\p)$-estimates) If $\p \in \RH_s(\whP)$  and $\Phi \in \cSM_r(\p) \cap S_p(\p)$ for some $s\in (1, \infty)$, $r \in (0, 2]$, $p \in [r, \infty)$, then  \eqref{eq:coro:convergence-rates} holds for $\mathcal X = S_p(\p)$.
	\end{enumerate}
\end{coro}

\begin{proof}
\eqref{item:coro:convergence-rate} The conclusions for $R(n)$ and $\ep_n$ when $\sigma=0$ are straightforward from the minimization argument for $H(\theta, \alpha, \ep, \|\tau\|_\theta)$ above, together with \eqref{eq:adapted-time-net}. For $\sigma >0$, the minimization procedure for the denominator in \eqref{eq:thm:rate-whP} is given as follows in which the constants $c_1, c_2>0$ do not depend on $n$.

For $\alpha  \in (0, \frac{3 - \theta}{2- \theta}] \supset (0, 1]$, we choose $\ep_n = \frac{1}{\sqrt{n}}$ to get that $\sup_{n \gee 1} \sqrt{n} H\big(\theta, \alpha, \frac{1}{\sqrt{n}}, \|\tau_n^\theta\|_\theta\big) <\infty$. Then, $\max\big\{\sigma\sqrt{\|\tau_n^\theta\|_\theta}, H\big(\theta, \alpha, \frac{1}{\sqrt{n}}, \|\tau_n^\theta\|_\theta\big)\big\} \sim_{c_1} \frac{1}{\sqrt{n}}$, and this yields $R(n) = \sqrt{n}$. 

For $\alpha \in (\frac{3 - \theta}{2- \theta}, 2] \subset (1, 2]$, it is easy to check that $\frac{1}{\alpha}(1- \frac{1}{2}(1- \theta)(\alpha -1)) \lee \frac{1}{2}$. Then, we choose $\ep_n = n^{-\frac{1}{\alpha}(1- \frac{1}{2}(1- \theta)(\alpha - 1))}$ to obtain that  $\max\big\{\sigma\sqrt{\|\tau_n^\theta\|_\theta}, H(\theta, \alpha, \ep_n , \|\tau_n^\theta\|_\theta)\big\} \lee  c_2 n^{-\frac{1}{\alpha}(1- \frac{1}{2}(1- \theta)(\alpha - 1))}$. Hence, $R(n) = n^{\frac{1}{\alpha}(1- \frac{1}{2}(1- \theta)(\alpha - 1))}$.

Items \eqref{item:coro:Sp-whP-estimate}, \eqref{item:coro:P-estimate} and \eqref{item:coro:Sp-P-estimate} follow directly from the respective ones in \cref{thm:approximation-BMO}.
\end{proof}

In \eqref{eq:coro:convergence-rates}, to disclose the role of the parameter $n$ in $R(n)$ we need the following:

	\begin{prop}\label{lem:cardinality-quantify}
		If \eqref{eq:small-jump-conditino} holds for some $\alpha \in (0, 2]$, then for any $\theta \in (0, 1]$ and $(\ep_n)_{n \gee 1} \subset (0, \infty)$ with $\inf_{n \gee 1} n \ep_n^\alpha >0$ there exists a constant $c_{\eqref{eq:relation-time-net}}>0$ such that for any $n \gee 1$, $\tau_n \in \cT_{\det}$ with $\#\tau_n = n+1$ one has
			\begin{align}\label{eq:relation-time-net}
			\| \# \comb{\tau_n^\theta}{\rho(\ep_n, \kappa)}\|_{L_2(\whP)} \sim_{c_{\eqref{eq:relation-time-net}}} n.
		\end{align}
	\end{prop} 
	\begin{proof}
		The idea of the proof is similar to that in \cite[Proposition 3.16]{Th20}, and hence, we omit the presentation here.
	\end{proof}

\begin{rema}[Role of the parameter $n$ in \eqref{eq:coro:convergence-rates}] The discretization time-nets employed in the approximation in \eqref{eq:coro:convergence-rates} are $\comb{\tau_n^\theta}{\rho(\ep_n, \kappa)}$, $n\gee 1$, with $\kappa = \frac{1- \theta}{2} \in [0, \frac{1}{2})$. Because of the randomness, a natural way to quantify their cardinality is to evaluate its expectation. This approach has been considered, for example, in \cite{Fu11} or \cite[Eq. (10) with $\beta =0$]{RT14}.
	\begin{enumerate}[\rm (1)]
		\item In \cref{lem:cardinality-quantify}, it is shown that the $L_2(\whP)$-norm of $\comb{\tau_n^\theta}{\rho(\ep_n, \kappa)}$ is comparable to the cardinality of the deterministic time-net $\tau_n^\theta$ for any $\ep_n$ taken case-wise from (\ref{item:coro:convergence-rate-sigma=0}, \ref{item:coro:convergence-rate-sigma>0}). It  means that the argument $n$ in the convergence rate $R(n)$ can be regarded under $\whP$ as the $2$-norm, and consequently, as any $q$-norm, $q \in [1, 2)$, of the employed time-nets. Moreover, the relation \eqref{eq:relation-time-net} indicates that the approximation with jump correction scheme using the time-nets $\comb{\tau_n^\theta}{\rho(\ep_n, \kappa)}$ does, in average, not increase the time complexity  compared to the classical Riemann approximation with the respective deterministic discretization times $\tau^\theta_n$. 
		
		\item To quantify $\comb{\tau_n^\theta}{\rho(\ep_n, \kappa)}$ under the original measure $\p$, we have the following: If the density $\od \p/\od \whP \in L_r(\whP)$ for some $r \in (1, \infty]$, then 
		 	\begin{align*}
		 	\| \# \comb{\tau_n^\theta}{\rho(\ep_n, \kappa)}\|_{L_{2(1-\frac{1}{r})}(\p)} \sim_{c} n, \quad n \gee 1,
		 \end{align*}
	 for some $c\gee 1$ not depending on $n$. This relation can be derived from using directly H\"older's inequality together with \eqref{eq:relation-time-net}. In particular, when $r=2$, then the expected number of $\comb{\tau_n^\theta}{\rho(\ep_n, \kappa)}$ under $\p$ is, up to a multiplicative positive constant, comparable to $n$.
	\end{enumerate}
\end{rema}

\subsection{Discretization for LRM strategies with jump correction method} \label{subsec:approximation}

\subsubsection{H\"older spaces and $\alpha$-stable-like measures} Before tending to applications of \cref{thm:approximation-BMO}, let us introduce in this part H\"older spaces where the payoff functions belong to and $\alpha$-stable-like measures which describes the small jump intensity of the underlying process.

\begin{defi} \label{defi:holder-stable}
	\begin{enumerate}[\rm (1)]
		\item \label{defi:Holder-spaces}  (\textit{H\"older spaces}) For $\eta \in [0, 1]$, let $C^{0, \eta}$ be the family of all Borel functions $g\colon (0, \infty) \to \R$ with $|g|_{C^{0, \eta}}<\infty$, where
		\begin{align*}
			|g|_{C^{0, \eta}} := \inf\{c\gee 0 : |g(x) - g(y)| \lee c |x-y|^\eta, \;\forall x, y >0, x\neq y\}.
		\end{align*}
		
		\item For $\alpha \in (0, 2]$, we let $\ell \in \sUS(\alpha)$ if $\ell$ is a L\'evy measure with
		\begin{align*}
			\sup_{r \in (0, 1)} r^\alpha \int_{r <|x| \lee 1} \ell(\od x) <\infty.
		\end{align*}
		\item \label{defi:alpha-stable-like} (\textit{$\alpha$-stable-like measures}) For $\alpha \in (0, 2)$ and  a L\'evy measure  $\ell$, we let $\ell \in \sS(\alpha)$ if $\ell = \ell_1 + \ell_2$ for some L\'evy measures $\ell_1, \ell_2$ which satisfy that
		\begin{align}\label{eq:item:alpha-stable}
			&\ell_1(\od x) = \frac{k(x)}{|x|^{\alpha +1}}\1_{\{x \neq 0\}} \od x \quad \mbox{and} \quad  \ell_2 \in \sUS(\alpha), 
		\end{align}
		where 
		\begin{itemize}
			\item $k \gee 0$ is right-continuous,
			
			\item $\limsup_{x \to 0} k(x) <\infty$ and  $\liminf_{x \to 0-} k(x) +  \liminf_{x \to 0+} k(x) >0$,
			
			\item $x\mapsto |x|^{-\alpha} k(x)$ is non-decreasing on $(-\infty, 0)$ and non-increasing on $(0, \infty)$.
		\end{itemize}
	\end{enumerate}
\end{defi}

\begin{rema}
	\begin{enumerate}[\rm (1)]
		\item It is obvious that $C^{0,1}$ consists of Lipschitz continuous functions on $(0, \infty)$ and $C^{0, 0}$ contains bounded Borel functions which are \textit{not} necessarily continuous.
		
		\item Evidently, any L\'evy measure belongs to $\sUS(2)$.
		
		\item  Observe that the function $k$ in \eqref{eq:item:alpha-stable} is bounded on compact intervals. Therefore, for any $\alpha \in (0, 2)$, one has $\ell_1 \in \sUS(\alpha)$, and then $\sS(\alpha) \subset \sUS(\alpha)$. Some further properties of $\sUS(\alpha)$ and $\sS(\alpha)$ are given in \cref{lemm:property-stable}.
	\end{enumerate}
\end{rema}

 We now provide some examples in the context of financial modelling using H\"older functions and $\alpha$-stable-like measures above. 

\begin{exam}
	\begin{enumerate}[\rm (1)]
		\itemsep0.3em
		\item The European call and put payoff functions are Lipschitz, hence they belong to $C^{0, 1}$.	For $\eta \in (0, 1)$, the \textit{power call} $g_\eta(y) := ((y-K)\vee 0)^\eta$ with strike $K>0$ belongs to $C^{0, \eta}$. The binary payoff function $g_0(y) : = \1_{(K, \infty)}(y)$ belongs to $C^{0, 0}$ obviously.
		
		\item The following examples are taken from \cite{KL05}:
		\begin{itemize}
			\item A \textit{CGMY process}  with parameters $C, G, M >0$ and $Y \in (0, 2)$  has the L\'evy measure $$\nu_{\mathrm{CGMY}}(\od x) = C \frac{\e^{Gx}\1_{\{x<0\}} + \e^{-Mx}\1_{\{x>0\}} }{|x|^{1+ Y}} \1_{\{x \neq 0\}} \od x.$$ 
			According to \cref{lemm:property-stable}\eqref{item:sufficient-S1}, $\nu_{\mathrm{CGMY}} \in \sS(Y)$.
			
			\item A \textit{Normal Inverse Gaussian} (NIG) process  has a L\'evy density $p_{\mathrm{NIG}}(x) : = \nu_{\mathrm{NIG}}(\od x)/\od x$ that satisfies $$\ts 0< \liminf_{|x| \to 0} x^2 p_{\mathrm{NIG}}(x) \lee \limsup_{|x|\to 0} x^2 p_{\mathrm{NIG}}(x) <\infty.$$
			Hence, \cref{lemm:property-stable}\eqref{item:sufficient-S1} verifies that $\nu_{\mathrm{NIG}} \in \sS(1)$.
			
			\item The L\'evy measure $\nu_{-}$ of a \textit{spectrally negative L\'evy process} is
			\begin{align*}
				\nu_{-}(\od x) = c\1_{\{x <0\}}  |x|^{-1-\alpha} \od x
			\end{align*}
		for some $c>0$ and $\alpha \in (0, 2)$. Then  \cref{lemm:property-stable}\eqref{item:sufficient-S1} shows that $\nu_{-} \in \sS(\alpha)$.
		\end{itemize}	
	\end{enumerate}	
\end{exam}

\subsubsection{Applications to LRM strategies}
This part provides a realization of \cref{thm:approximation-BMO} where the approximated stochastic integral is the integral term in the FS decomposition of $g(S_T)$ (see \cref{defi:FS-decomposition}). Furthermore, we choose the c\`adl\`ag version $\tilde\vartheta^g$ of the LRM strategy, which is feasible due to \cref{theo:LRM-strategy}\eqref{item:LRM-strategy-cadlag}, so that the integral we are going to approximate is of the form
\begin{align*}%\label{eq:stochastic-integral-approximated}
\int_0^T \tilde\vartheta^g_{t-} \od S_t.
\end{align*}
Then, under the assumptions of \cref{theo:LRM-strategy}, it follows from \cref{rema:unique-cadlag-verion-LRM} that, for $t\in [0, T)$,
\begin{align}\label{eq:integrand-formula}
\tilde\vartheta^g_t = \frac{1}{\sn}\(\sigma^2 \pd_y G^*(t, S_t) + \int_{\R} \frac{G^*(t, \e^x S_t) - G^*(t, S_t)}{S_t}(\e^x-1)\nu(\od x)\) \quad \mbox{a.s.}
\end{align} 
The weight processes in this context are given by
\begin{align*}
&\Theta(\eta) : = \ts\sup_{u \in [0, \cdot]} (S^{\eta -1}_u), \quad \Psi(\eta) : = \Theta(\eta) S, \quad \Phi(\eta) : = \Psi(\eta) + \ts \sup_{u \in [0, \cdot]}|\Delta \Psi(\eta)_u|.
\end{align*}

Regarding the coefficient $\gamma_{\eqref{gammaS}}$, here we focus on the case $\gamma_{\eqref{gammaS}} \neq 0$ since the case $\gamma_{\eqref{gammaS}} = 0$, which corresponds to the martingale setting, has been investigated in \cite[Section 4]{Th20}. Recall from \cref{defi:minimal-martingale-measure} the minimal martingale measure $\mmm$ for $S$.

\begin{theo}\label{thm:approximation-convergence-rate}
Assume \cref{assum:condition-MMM}, $\gamma_{\eqref{gammaS}} \neq 0$ and $\int_{x>1} \e^{3x}\nu(\od x) <\infty$. Let $\eta \in [0, 1]$ and $g \in C^{0, \eta}$. Then the following assertions hold:
\begin{enumerate}[\rm(1)]
	\itemsep0.3em
	\item \label{item:1:weight-regularity} Both $\Psi(\eta)$ and $\Phi(\eta)$ belong to $\cSM_3(\p) \cap \cSM_{2}(\mmm)$.
	
	\item \label{item:mmm-reverse-holder} $\mmm \in \RH_{3}(\p)$ and $\|\cdot\|_{\BMO^{\Phi(\eta)}_{2}(\mmm)} \lee c \|\cdot\|_{\BMO^{\Phi(\eta)}_{2}(\p)}$ for some constant $c>0$.
	
	\item \label{item:2:growth-strategy}  In the notations of \cref{assum:change-measure}, one has $\tilde\vartheta^g \in \bfA(\theta, \Theta(\eta)|\mmm)$ for $\ttt^g$ given in \cref{theo:LRM-strategy} and for the parameter $\theta$ case-wise provided in \cref{tab:rates}.
		
	\item \label{item:3:approximation-mmm} For the adapted time-nets $\tau^\theta_n$ given in \eqref{eq:adapted-time-net} and for $\mathcal X = \BMO_2^{\Phi(\eta)}(\mmm)$, one has 
	\begin{align}\label{eq:comvergence-rate-mmm}
	\sup_{n\gee 1} R(n) \big\|E^{\corr}\big(\tilde\vartheta^g, \tau_n^\theta \,\big|\, \ep_n, \tfrac{1-\theta}{2}\big)\big\|_{\mathcal X}<\infty,
	\end{align}
	where $\theta$, $R(n)$, $\ep_n$ are case-wise provided in \cref{tab:rates}.
	
	\item\label{item:4:approximation-original} Let $s \in (1, \infty)$. Assume in addition that
	\begin{itemize}
		\item $\int_{x<-1} \e^{(1-s)x} \nu(\od x) <\infty$ and  $\ln\big( 1 + \frac{\sn}{\gamma_{\eqref{gammaS}}}\big)=: x_0 \notin \supp\nu$ when $\frac{\sn}{\gamma_{\eqref{gammaS}}} \in (-1, \infty)$;
		
		\item $\int_{x<-1} \e^{(1-s)x} \nu(\od x) <\infty$ when $\frac{\sn}{\gamma_{\eqref{gammaS}}} =-1$.
	\end{itemize}
 Then, one has $\p \in \RH_s(\mmm)$, and there is a constant  $c'\gee 1$ such that
	\begin{align*}
	\|\cdot\|_{\BMO_2^{\Phi(\eta)}(\mmm)} \sim_{c'} \|\cdot\|_{\BMO_2^{\Phi(\eta)}(\p)}.
	\end{align*}
	Consequently, \eqref{eq:comvergence-rate-mmm} holds for $\mathcal X = \BMO_2^{\Phi(\eta)}(\p)$ and for $\mathcal X = L_3(\p)$.
	
	\item \label{item:Lq-estimate-error} Under the assumptions of Item \eqref{item:4:approximation-original}, if  $\int_{x>1} \e^{px} \nu(\od x) <\infty$ for some $p \in (3, \infty)$, then \eqref{eq:comvergence-rate-mmm} holds for any $\mathcal X \in \{\BMO^{\Phi(\eta)}_{p-1}(\mmm), S_{p-1}(\mmm), \BMO^{\Phi(\eta)}_{p}(\p), S_p(\p)\}$.
\end{enumerate}

\vspace{-.5 em}
\begin{longtable}{|l|l|l|l|}
	\caption{Parameter $\theta$, convergence rate $R(n)$ and initial jump size threshold $\ep_n$}
	\label{tab:rates}
	\vspace{-.5em}\\
	\hline
	& Interplay between $g$ and $X$ & Values of  $\theta$ & $R(n)$ and $\ep_n$  \\ \hline
	\endfirsthead
	\endhead
	$\mathrm{C1}$ & \begin{tabular}[c]{@{}l@{}} 
		$\sigma =0$ and\\
		$\nu \in \sUS(\alpha)$ for some  \\
		$(\eta,\alpha)  \in ([0, 1) \times (0, 1+ \eta))$\\
		\qquad \qquad $\cup (\{1\} \times (0, 2])$
	\end{tabular}    & $\theta =1$    &      \begin{tabular}[c]{@{}l@{}}   $R(n) = 1/\ep_n = \sqrt[\alpha]{n}$  if $\alpha \in (1, 2]$,  \\
		$R(n) = n/(1+ \log n)$, $\ep_n = 1/n$\\
		\qquad  if $\alpha =1$,\\
		$R(n) = 1/\ep_n = n$ if $\alpha \in (0, 1)$   \end{tabular}   \\ \hline
	$\mathrm{C2}$ & \begin{tabular}[c]{@{}l@{}}
		$\sigma =0$ and\\
		$\nu \in \sS(\alpha)$ for some\\  $(\eta, \alpha) \in [0, 1) \times [1+ \eta, 2)$ \end{tabular}    &    \begin{tabular}[c]{@{}l@{}}
		$\forall\theta \in\Big(0, \frac{2(1+\eta)}{\alpha} -1\Big)$     \end{tabular}     &    \begin{tabular}[c]{@{}l@{}}  	$R(n) = 1/\ep_n = n^{\frac{1}{\alpha}(1- \frac{1}{2}(1- \theta)(\alpha -1))}$  \\
		\qquad if $(\eta, \alpha) \neq (0, 1)$,  \\
		$R(n) = n/(1+ \log n)$, $\ep_n = 1/n$ \\
		\qquad if $(\eta, \alpha) = (0, 1)$	
	\end{tabular} \\ \hline
	$\mathrm{C3}$ & \begin{tabular}[c]{@{}l@{}}  	$\sigma >0$ and $\eta =1$
	\end{tabular} & $\theta = 1$ & $R(n) = 1/\ep_n =  \sqrt{n}$
	\\ \hline 
	$\mathrm{C4}$ &	\begin{tabular}[c]{@{}l@{}}  	$\sigma >0$, $\eta \in (0, 1)$ and\\
		$\nu \in \sUS(\alpha)$ \\ \quad for some 
		$\alpha \in (0, 2]$ 
	\end{tabular} & $\forall \theta \in (0, \eta)$ &  \begin{tabular}[c]{@{}l@{}}  	$R(n) = 1/\ep_n = \sqrt{n}$ if $\alpha \in (0, \frac{3 - \theta}{2- \theta}]$,\\[2pt]
		$R(n) = 1/\ep_n = n^{\frac{1}{\alpha}(1- \frac{1}{2}(1- \theta)(\alpha -1))}$\\
		\qquad  if $\alpha \in (\frac{3- \theta}{2- \theta}, 2]$
	\end{tabular} 	\\ \hline 
\end{longtable}
\end{theo}
\vspace{-.9 em}

\begin{rema}
	The parameter $\theta$, which describes the growth property of $\ttt^g$, is the outcome of the interplay between the small jump activity of $X$ and the H\"older regularity of the payoff function. 
\end{rema}

For the proof of \cref{thm:approximation-convergence-rate}, we need the following lemma.

\begin{lemm} \label{rema:jump-intensity-relation} Under \cref{assum:condition-MMM}, the following assertions hold:
	\begin{enumerate}[\rm (1)]
		\item \label{rema:small-jump-intensity-relation}  For $\alpha \in (0, 2]$, one has $\nu \in \sUS(\alpha)$ if and only if $\nu^* \in \sUS(\alpha)$.
		
		\item \label{lemm:nu*-stable-like-process} For $\alpha \in (0, 2)$, one has $\nu \in \sS(\alpha)$ if and only if $\nu^* \in \sS(\alpha)$.
		
		\item \label{rema:big-jump-intensity-relation} Assume $\gamma_{\eqref{gammaS}} \neq 0$. Then, for $r \in [1, \infty)$ one has
		\begin{align*}
		&\E \e^{rX_t} <\infty, \forall t>0  \Leftrightarrow \int_{|x|>1} \e^{rx} \nu(\od x) <\infty \Leftrightarrow \int_{x>1} \e^{rx} \nu(\od x) <\infty \\
		& \Leftrightarrow \int_{x>1} \e^{(r-1)x} \nu^*(\od x)<\infty \Leftrightarrow \int_{|x|>1} \e^{(r-1)x} \nu^*(\od x)<\infty \Leftrightarrow \mme \e^{(r-1)X_t} <\infty, \forall t>0.
		\end{align*}
	\end{enumerate}
\end{lemm}

\begin{proof} We recall $\nu^*(\od x) = \big(1 - \frac{\gamma_{\eqref{gammaS}} }{\sn}(\e^x -1)\big)\nu(\od x) =: (\frac{\od \nu^*}{\od \nu})(x) \nu(\od x)$ from \eqref{eq:Levy-measure-MMM} and the set $A$ from \cref{assum:condition-MMM}.
	
	\medskip
	
	\eqref{rema:small-jump-intensity-relation} follows directly from the fact that $\lim_{x \to 0} (\frac{\od \nu^*}{\od \nu})(x) = 1$ and $\sup_{x \in (0, 1]} (\frac{\od \nu^*}{\od \nu})(x) <\infty$.
	
	\medskip
	
	\eqref{lemm:nu*-stable-like-process} Assume that $\nu = \nu_1 + \nu_2 \in \sS(\alpha)$, where $\nu_1$, $\nu_2$ are L\'evy measures which satisfy \eqref{eq:item:alpha-stable} for $\ell_i = \nu_i$, $i =1, 2$. We define
	\begin{align*}
		\nu_1^*(\od x) := \begin{cases}
			\big(\big(1- \frac{\gamma_{\eqref{gammaS}}}{\sn}(\e^x -1)\big)\1_{\{x<0\}} + \1_{\{x \gee 0\}}\big) \1_{\{x \in A\}} \nu_1(\od x) & \mbox{if } \frac{\gamma_{\eqref{gammaS}}}{\sn} \lee 0\\[5pt]
			\big(\1_{\{x<0\}} + \big(1- \frac{\gamma_{\eqref{gammaS}}}{\sn}(\e^x -1)\big)\1_{\{x \gee 0\}}\big)\1_{\{x \in A\}} \nu_1(\od x) & \mbox{if } \frac{\gamma_{\eqref{gammaS}}}{\sn} > 0,
		\end{cases}
	\end{align*}
	and set
	\begin{align*}
		\nu^*_2(\od x) & : = \nu^*(\od x) - \nu_1^*(\od x) = \1_{\{x \in A\}} \nu^*(\od x) - \nu_1^*(\od x) \\
		& = \begin{cases}
			- \frac{\gamma_{\eqref{gammaS}}}{\sn}(\e^x -1)\1_{\{x \gee 0,\,x\in A\}}\nu_1(\od x) + \big(1- \frac{\gamma_{\eqref{gammaS}}}{\sn}(\e^x -1)\big) \1_{\{x \in A\}} \nu_2(\od x) & \mbox{if } \frac{\gamma_{\eqref{gammaS}}}{\sn} \lee 0\\[5pt]
			\frac{\gamma_{\eqref{gammaS}}}{\sn}(1-\e^x)\1_{\{x<0,\,x\in A\}}\nu_1(\od x) + \big(1- \frac{\gamma_{\eqref{gammaS}}}{\sn}(\e^x -1)\big)\1_{\{x \in A\}} \nu_2(\od x) & \mbox{if } \frac{\gamma_{\eqref{gammaS}}}{\sn} > 0.
		\end{cases}
	\end{align*}
	One remarks that $\int_{|x| >1} \e^{x} \nu_i (\od x) <\infty$ by the integrability assumption in \Cref{subsec-setting-5}. Then, it is straightforward to check that both $\nu^*_1$ and $\nu^*_2$ are L\'evy measures. Moreover, since the origin is an interior point of $A$, a short calculation shows that $\nu^*_1$ and $\nu^*_2$ satisfy \eqref{eq:item:alpha-stable} for $\ell_i = \nu^*_i$. Hence, $\nu^* \in \sS(\alpha)$.
	
	For the converse implication, we assume $\nu^* = \nu^*_1 + \nu^*_2 \in \sS(\alpha)$. Then, for 
		\begin{align*}
		\nu_1(\od x) := \begin{cases}
			\big( \1_{\{x < 0\}} + \big(1- \frac{\gamma_{\eqref{gammaS}}}{\sn}(\e^x -1)\big)^{-1}\1_{\{x\gee 0\}}  \big) \1_{\{x \in A\}} \nu^*_1(\od x) & \mbox{if } \frac{\gamma_{\eqref{gammaS}}}{\sn} \lee 0\\[5pt]
			\big(\big(1- \frac{\gamma_{\eqref{gammaS}}}{\sn}(\e^x -1)\big)^{-1} \1_{\{x < 0\}} + \1_{\{x\gee 0\}}\big)\1_{\{x \in A\}} \nu^*_1(\od x) & \mbox{if } \frac{\gamma_{\eqref{gammaS}}}{\sn} > 0,
		\end{cases}
	\end{align*}
and $\nu_2(\od x) := \nu(\od x) - \nu_1(\od x)$, an analogous argument as in the first implication yields $\nu = \nu_1 + \nu_2 \in \sS(\alpha)$.
	
	\medskip
	
\eqref{rema:big-jump-intensity-relation} follows from combining \cite[Theorem 25.3]{Sa13} with the relation between $\nu$ and $\nu^*$.
\end{proof}

\begin{proof}[Proof of \cref{thm:approximation-convergence-rate}]
Recall $(X|\mmm) \sim (\gamma^*, \sigma, \nu^*)$ from \eqref{eq:Levy-measure-MMM} and  $A$ from \cref{assum:condition-MMM}.

\medskip 

\eqref{item:1:weight-regularity}  Combining \cref{rema:jump-intensity-relation}\eqref{rema:big-jump-intensity-relation} with \cref{prop:regularity-weight-process}, we obtain that $\Psi(\eta) \in \cSM_3(\p) \cap \cSM_{2}(\mmm)$. Thanks to \cref{lemm:olderline-Phi}, one has $\Phi(\eta) \in \cSM_3(\p) \cap \cSM_{2}(\mmm)$.  

\medskip 

\eqref{item:mmm-reverse-holder} We recall $\cE(U)$ from \cref{defi:minimal-martingale-measure} and notice that $\cE(U)>0$ by \cref{assum:condition-MMM}. According to  \Cref{subsec:exponential-levy}, there is a L\'evy process $V$ with $(V|\p)\sim (\gamma_V, \sigma, \nu_V)$ such that $\cE(U) = \e^V$. Due to \eqref{eq:characteristics-U}, by letting $h(x) : = \ln(1+x)$ for $x>-1$ one has 
\begin{align}\label{eq:form-nuV}
\nu_V = \nu_U \circ h^{-1} = (\nu \circ \alpha^{-1}_U)\circ h^{-1} = \nu \circ(h \circ \alpha_U)^{-1}.
\end{align}
Since $h(\alpha_U(x)) = \ln\big(1-\frac{\gamma_{\eqref{gammaS}}(\e^x -1)}{\sn}\big)$ for $x\in A$, there is an $\ep_{\eqref{eq:exists-epsilon}}>0$ such that 
\begin{align}\label{eq:exists-epsilon}
\left\{x \in A : |h(\alpha_U(x))|>1\right\} \subseteq A \backslash(-\ep_{\eqref{eq:exists-epsilon}}, \ep_{\eqref{eq:exists-epsilon}}).
\end{align}
Then, the assumption $\int_{|x|>1} \e^{3x} \nu(\od x) <\infty$ implies that
 \begin{align*}
\int_{|x|>1} \e^{3x} \nu_V(\od x) &= \int_{|h(\alpha_U(x))|>1} \e^{3(h(\alpha_U(x)))} \nu(\od x)\\
&  \lee  \int_{A \backslash (-\ep_{\eqref{eq:exists-epsilon}}, \ep_{\eqref{eq:exists-epsilon}})} \(1- \frac{\gamma_{\eqref{gammaS}}(\e^x -1)}{\sn}\)^3 \nu(\od x) <\infty.
\end{align*}
Let $(V|\p) \sim \psi_V$. Since $(\e^{3V_t + t \psi_V(-3\im)})_{t \in [0, T]}$ is a c\`adl\`ag martingale, it follows from the optional stopping theorem that for any stopping time $\rho \colon \Omega \to [0, T]$, a.s., 
\begin{align*}
\bce{\F_\rho}{\e^{3V_T}} & = \e^{-T \psi_V(-3\im)}\bce{\F_\rho}{\e^{3V_T + T \psi_V(-3\im)}} =  \e^{-T\psi_V(-3\im )} \e^{3V_\rho + \rho \psi_V(-3\im)}\\
& \lee \e^{T|\psi_V(-3\im)|} \e^{3 V_\rho} = \e^{T|\psi_V(-3\im)|} \left|\bce{\F_\rho}{\e^{V_T}}\right|^3,
\end{align*}
where we use the martingale property of $\e^V$ for the last equality. According to \cref{defi:RH-space},  $\od \mmm = \e^{V_T} \od \p \in \RH_3(\p)$. Hence,  \cref{prop:feature-BMO}\eqref{item:change-of-measure} yields $\|\cdot\|_{\BMO^{\Phi(\eta)}_{2}(\mmm)} \lee c \|\cdot\|_{\BMO^{\Phi(\eta)}_{2}(\p)}$.

\medskip

\eqref{item:2:growth-strategy} Since the function $g \in C^{0, \eta}$ has at most linear growth at infinity and $\int_{|x|>1} \e^{3x} \nu(\od x) <\infty$, which is equivalent to $\int_{|x|>1} \e^{2x} \nu^*(\od x) <\infty$ by \cref{rema:jump-intensity-relation}\eqref{rema:big-jump-intensity-relation}, the assumptions of \cref{theo:LRM-strategy} are satisfied so that \eqref{eq:integrand-formula} is applicable. We now verify \cref{assum:change-measure} with $\whP = \mmm$.

\smallskip

$\bullet$ \underline{For Item} \eqref{item:semimartingale-Z-hat-P}, we notice that $S$ is an $L_2(\mmm)$-martingale. Hence, the stochastic logarithm $Z$ appeared in \eqref{eq:stoch-logarithm} is a L\'evy process under $\mmm$ and is also an $L_2(\mmm)$-martingale. Denote 
$$(Z|\mmm) \sim (\gamma_Z^*, \sigma, \nu^*_Z).$$ Then, Item \eqref{item:semimartingale-Z-hat-P} is  satisfied for $\bar Z_0 = 0$, $\bar b^Z = 0$, $W^{\mmm} = W^*$ and $\pi^{\mmm} = \Leb\otimes \nu^*_Z$.

\smallskip

$\bullet$ \underline{For Item} \eqref{item:growth-condition}, we aim to apply \cref{theo:envelop-MVH-strategy} with the selections 
$$\bQ = \mmm \quad \mbox{and} \quad \ell = \nu$$
so that, for any $t \in [0, T)$, \eqref{eq:integrand-formula} gives
\begin{align*}
	|\ttt^g_t| = (\sn)^{-1} |\varGamma_\nu^{\mmm}(T-t, S_t)| \quad \mbox{a.s.}
\end{align*}
 Let us examine each case in \cref{tab:rates}. One recalls that, thanks to \cref{rema:jump-intensity-relation}, $\nu \in \sUS(\alpha) \Leftrightarrow \nu^* \in \sUS(\alpha)$ and $\nu \in \sS(\alpha) \Leftrightarrow \nu^* \in \sS(\alpha)$.\\ 
  \underline{For $\mathrm{C1}$}, the given range of $(\eta, \alpha)$ yields $\int_{|x| \lee 1} |x|^{1+ \eta} \nu(\od x) <\infty$. Thus, \cref{theo:envelop-MVH-strategy}\eqref{item:2:sigma=0-envelope} gives
 \begin{align*}
 	|\ttt^g_t| \lee (\sn)^{-1} c_{\eqref{eq:thm:Holder-case-estimate:psi-process}} S_t^{\eta -1} \lee (\sn)^{-1} c_{\eqref{eq:thm:Holder-case-estimate:psi-process}} \Theta(\eta)_t \quad \mbox{a.s.,}
 \end{align*}
which shows that \eqref{item:growth-condition} holds with $\theta = 1$ and $\Theta = \Theta(\eta)$.\\
\underline{For $\mathrm{C2}$}, when $(\eta, \alpha) \in [0, 1) \times (1 + \eta, 2)$ (resp. $(\eta, \alpha) \in [0, 1) \times \{1 + \eta\}$) we apply \cref{theo:envelop-MVH-strategy}\eqref{item:2.2:sigma=0-envelope} to find that \eqref{eq:upper-weight-function} is satisfied for $\Theta = \Theta(\eta)$ and $w(t) = (T-t)^{\frac{1+ \eta}{\alpha} - 1}$ (resp. $w(t) = \max\{1, \log\frac{1}{T-t}\}$). Hence, applying \cref{rema:parameter-theta}\eqref{item:3:rema:parameter-theta}  yields $\ttt^g \in \cap_{0<\theta<\frac{2(1+\eta)}{\alpha}-1} \bfA(\theta, \Theta(\eta)|\mmm)$ for any $(\eta, \alpha) \in [0, 1) \times [1 + \eta, 2)$.\\
\underline{For $\mathrm{C3}$}, we use \cref{theo:envelop-MVH-strategy}\eqref{item:1:sigma>0-envelope} to get $\theta = 1$ and $\Theta = \Theta(\eta)$.\\
\underline{For $\mathrm{C4}$}, we apply \cref{theo:envelop-MVH-strategy}\eqref{item:1:sigma>0-envelope} again to find that \eqref{eq:upper-weight-function} is satisfied for $\Theta = \Theta(\eta)$ and $w(t) = (T-t)^{\frac{\eta-1}{2}}$. Then,  \cref{rema:parameter-theta}\eqref{item:3:rema:parameter-theta} shows $\ttt^g \in \cap_{0<\theta< \eta} \bfA(\theta, \Theta(\eta)|\mmm)$.

\smallskip

$\bullet$ \underline{For Item} \eqref{item:equivalent-martingale-measure}, $\ttt S$ is a $\mmm$-martingale because of \cref{theo:LRM-strategy}\eqref{item:LRM-strategy-cadlag}. The square $\mmm$-integrability of $\ttt S$ follows from the growth property of $\ttt$ in Item \eqref{item:growth-condition} and $\Phi(\eta) \in \cSM_2(\mmm)$.

\smallskip

$\bullet$ \underline{Item} \eqref{item:condition-stoch-integral} follows from combining the growth property of $\ttt$ with $\Phi(\eta) \in \cSM_3(\p)\subset \cSM_2(\p)$.

\medskip

\eqref{item:3:approximation-mmm}  Our purpose is to apply \cref{coro:convergence-rate} with $\whP = \mmm$, $\Theta = \Theta(\eta)$ and $\Phi = \Phi(\eta)$. We check the assumptions of \cref{thm:approximation-BMO}. By definition, it is clear that 
\begin{align*}
	\max\{\Theta(\eta)_{-} S_{-}, \Theta(\eta) S\} \lee \Phi(\eta)
\end{align*}
and that $\Phi(\eta) \in \cSM_2(\mmm)$ according to Item \eqref{item:1:weight-regularity}. Condition \eqref{eq:small-jump-conditino} in this context simply means that $\nu^*_Z \in \sUS(\alpha)$. Since $\nu_Z^* = \nu^* \circ \tilde h^{-1}$ for $\tilde h(x): = \e^x -1$, it is straightforward that 
$$\nu^*_Z \in \sUS(\alpha) \Leftrightarrow \nu^* \in \sUS(\alpha) \Leftrightarrow \nu \in \sUS(\alpha),$$
 where the second equivalence is  due to   \cref{rema:jump-intensity-relation}.  In \cref{tab:rates}, the conclusions for $R(n)$ and $\ep_n$ in the \underline{cases $\mathrm{C1}$ and $\mathrm{C2}$} (resp. \underline{cases $\mathrm{C3}$ and $\mathrm{C4}$}) follow directly from \eqref{eq:coro:convergence-rates} and \eqref{item:coro:convergence-rate-sigma=0} (resp. \eqref{item:coro:convergence-rate-sigma>0}).

\medskip

\eqref{item:4:approximation-original} \textit{Step 1}. For $\nu_V$ given in \eqref{eq:form-nuV}, we first show that  $\int_{|x|>1} \e^{(1-s)x} \nu_V(\od x) <\infty$. Indeed,
\begin{align}\label{eq:itegrability-nu-minusV}
 \int_{|x|>1} \e^{(1-s)x} \nu_V(\od x) & = \int_{|h(\alpha_U(x))|>1} \(1- \frac{\gamma_{\eqref{gammaS}}(\e^x -1)}{\sn}\)^{1-s} \nu(\od x) \notag \\
 &\lee \int_{A \backslash (-\ep_{\eqref{eq:exists-epsilon}}, \ep_{\eqref{eq:exists-epsilon}})} \(1- \frac{\gamma_{\eqref{gammaS}}(\e^x -1)}{\sn}\)^{1-s} \nu(\od x) =: \mathrm{I}_{\eqref{eq:itegrability-nu-minusV}}.
\end{align}
We consider three cases for $\frac{\sn}{\gamma_{\eqref{gammaS}}}$ as follows:\\
\underline{Case 1}: $\frac{\sn}{\gamma_{\eqref{gammaS}}}>-1$. Since $x_0 \notin \supp\nu$ by assumption, it implies that $\nu((x_0 - \ep_0, x_0 + \ep_0)) =0$ for some $\ep_0>0$. Moreover, one has $1- \frac{\gamma_{\eqref{gammaS}}(\e^x -1)}{\sn} \gee |x-x_0| \frac{|\gamma_{\eqref{gammaS}}|}{\sn} \e^{x\wedge x_0}$ for all $x\in A$ by the mean value theorem. Hence,
\begin{align*}
\mathrm{I}_{\eqref{eq:itegrability-nu-minusV}} & = \int_{|x-x_0| \gee \ep_0,\,x \in A \backslash (-\ep_{\eqref{eq:exists-epsilon}}, \ep_{\eqref{eq:exists-epsilon}})} \(1- \frac{\gamma_{\eqref{gammaS}}(\e^x -1)}{\sn}\)^{1-s} \nu(\od x),\\
& \lee \ep_0^{1-s} \frac{|\gamma_{\eqref{gammaS}}|^{1-s}}{\sn^{1-s}} \int_{|x-x_0|\gee \ep_0,\,x \in A \backslash (-\ep_{\eqref{eq:exists-epsilon}}, \ep_{\eqref{eq:exists-epsilon}})} \e^{(1-s)(x\wedge x_0)} \nu(\od x)  \\
& \lee \ep_0^{1-s} \frac{|\gamma_{\eqref{gammaS}}|^{1-s}}{\sn^{1-s}} \int_{A \backslash (-\ep_{\eqref{eq:exists-epsilon}}, \ep_{\eqref{eq:exists-epsilon}})} \e^{(1-s)(x\wedge x_0)} \nu(\od x)  <\infty,
\end{align*}
where the finiteness is due to the assumption $\int_{x < -1} \e^{(1-s)x} \nu(\od x) <\infty$.\\
\underline{Case 2}: $\frac{\sn}{\gamma_{\eqref{gammaS}}}=-1$. We have 
$\mathrm{I}_{\eqref{eq:itegrability-nu-minusV}} =  \int_{A \backslash (-\ep_{\eqref{eq:exists-epsilon}}, \ep_{\eqref{eq:exists-epsilon}})} \e^{(1-s)x} \nu(\od x)<\infty.$\\
\underline{Case 3}: $\frac{\sn}{\gamma_{\eqref{gammaS}}}<-1$. In this case one has $\gamma_{\eqref{gammaS}} <0$, which implies that $\inf_{x \in \R}\big(1 - \frac{\gamma_{\eqref{gammaS}}(\e^x-1)}{\sn}\big) = 1 + \frac{\gamma_{\eqref{gammaS}}}{\sn} >0$. Hence,
\begin{align*}
\mathrm{I}_{\eqref{eq:itegrability-nu-minusV}} \lee \(1 + \frac{\gamma_{\eqref{gammaS}}}{\sn}\)^{1-s} \int_{A \backslash (-\ep_{\eqref{eq:exists-epsilon}}, \ep_{\eqref{eq:exists-epsilon}})} \nu(\od x) <\infty.
\end{align*}
We conclude from three cases above that $ \int_{|x|>1} \e^{(1-s)x} \nu_V(\od x) <\infty$, or equivalently,
$$\e^{-t \psi_V((s-1)\im)} = \E \e^{(1-s)V_t} <\infty, \quad t>0.$$

  \textit{Step 2}.  We show $\p \in \RH_s(\mmm)$. Indeed, since $\od \p  = \e^{-V_T} \od \mmm$ and $\e^V = \cE(U)$ is a $\p$-martingale, it holds that $\e^{-V}$ is a $\mmm$-martingale. We have for any $t\in [0, T]$ that, a.s.,
\begin{align*}
\bcem{\F_t}{\e^{s(-V_T)}} = \e^{-V_t} \bce{\F_t}{\e^{-sV_T} \e^{V_T}} = \e^{-V_t} \bce{\F_t}{\e^{(1-s)V_T}} \lee  \e^{T|\psi_V((s-1)\im)|} \e^{-s V_t}.
\end{align*}
By a standard approximation argument, we infer that 
\begin{align*}
\bcem{\F_\rho}{\e^{s(-V_T)}} \lee \e^{T|\psi_V((s-1)\im)|} \e^{-s V_\rho}  = \e^{T|\psi_V((s-1)\im)|} \big|\bcem{\F_\rho}{\e^{-V_T}}\big|^s \quad \mbox{a.s.}
\end{align*}
for any stopping times $\rho \colon \Omega \to [0, T]$. Therefore, $\p \in \RH_s(\mmm)$. 

\textit{Step 3}. Thanks to \textit{Step 2} and Items  \eqref{item:1:weight-regularity},  \eqref{item:mmm-reverse-holder}, we apply \cref{prop:feature-BMO}\eqref{item:change-of-measure} with $\bQ = \mmm$ and $p =2$  to obtain  $$\|\cdot\|_{\BMO_2^{\Phi(\eta)}(\mmm)} \sim_c \|\cdot\|_{\BMO_2^{\Phi(\eta)}(\p)}.$$
The ``Consequently'' part is straightforward because of $\Phi(\eta) \in \cSM_3(\p)$ and \cref{prop:feature-BMO}\eqref{item:Lp-BMO-estimate}.

\medskip

\eqref{item:Lq-estimate-error} An analogous argument as in the proof of Item \eqref{item:1:weight-regularity} shows that both $\Psi(\eta)$ and $\Phi(\eta)$ belong to $\cSM_p(\p) \cap \cSM_{p-1}(\mmm)$. We now apply \cref{prop:feature-BMO}\eqref{item:equivalent-BMO} with $r =2$ and then combine this with Item \eqref{item:4:approximation-original} to derive the assertion.
\end{proof}

\appendix

\section{Proof of \cref{thm:approximation-BMO}, Item \eqref{item:thm:rate-whP}}\label{appen:proof:thm:approximation-BMO}

In this proof, we often use the fact that a \textit{c\`agl\`ad} (left-continuous with right limits) function has countably many discontinuities which then implies that when integrating such a function with respect to the Lebesgue measure we may use its \textit{c\`adl\`ag} version without changing the value of the integral.  The proof is divided in the following steps.

\smallskip

\textit{\textbf{\textlabel{Step 1}{thm:step-1}:}} \textit{Growth in time and no fixed-time discontinuities of $\tilde\vartheta$}. For $0\lee a  < T$, a.s., 
\begin{align}\label{eq:estimate-variance-vartheta}
	|\ttt_a| & \lee \begin{cases}
		c_{\eqref{eq:theta-1}} \Theta_a & \mbox{ if } \theta =1\\
		c_{\eqref{item:growth-varphi}}  (T-a)^{\frac{\theta-1}{2}}\Theta_a & \mbox{ if } \theta \in (0, 1)
	\end{cases} 
	=: c_{\eqref{eq:estimate-variance-vartheta}}(T-a)^{\frac{\theta-1}{2}} \Theta_a \quad \mbox{for } \theta \in (0, 1],
\end{align}
where the case $\theta \in (0,1)$ holds because of \cref{rema:parameter-theta}\eqref{item:1:rema:parameter-theta} (here, by assumption, $M = \tilde\vartheta S$ is an $L_2(\whP)$-martingale), and where $c_{\eqref{eq:estimate-variance-vartheta}} : = c_{\eqref{eq:theta-1}}\vee c_{\eqref{item:growth-varphi}}$. Moreover, it follows from the monotonicity of $\Theta$ and the c\`adl\`ag property of $\ttt, \Theta$ that
\begin{align}\label{eq:path-variation}
	|\ttt_t - \ttt_a| \lee 2 c_{\eqref{eq:estimate-variance-vartheta}}(T-t)^{\frac{\theta-1}{2}} \Theta_t \quad \mbox{for all } 0\lee a <t <T \mbox{ a.s.}
\end{align} 
On the other hand, since $\ttt S$ is a martingale adapted to the natural completed filtration of a L\'evy process, which is a quasi-left continuous filtration (see, e.g., \cite[p.150, Exercise 9]{Pr05}), it follows from \cite[p.190]{Pr05} that $\ttt S$ has no fixed-time of discontinuities, i.e., $\ttt_t S_t = \ttt_{t-} S_{t-}$ a.s. for $t \in [0, T)$. Since $S_{-} >0$ and $S_t = S_{t-}$ a.s., we infer that $\ttt_{t} = \ttt_{t-}$ a.s. for $t \in [0, T)$. Hence, $\ttt$ has no fixed-time of discontinuities. As a consequence, for any $\tau = (t_i)_{i=0}^n \in \cT_{\det}$ one has, a.s., $\ttt^\tau_u = \sum_{i=1}^n \ttt_{t_{i-1}}\1_{(t_{i-1}, t_i]}(u)$, $\forall u \in [0, T]$.

\smallskip

\textbf{\textit{Step 2:}} \textit{One-step approximation}. Let $0 \lee a < t <T$ arbitrarily. Since $S$ is a $\whP$-semimartingale which satisfies the SDE $\od S_t = S_{t-} \od Z_t$, using a Gronwall argument as in \cite[Lemma 5.1]{Th20} yields constants $c_{\eqref{eq:estimate-S}}, d_{\eqref{eq:estimate-S}}>0$ independent of $a, t$ such that, a.s., 
\begin{align}\label{eq:estimate-S}
	\hbbce{\F_a}{\int_{(a, t]} S_u^2 \od u} \lee c_{\eqref{eq:estimate-S}}^2 (t-a) S_a^2 \quad \mbox{and} \quad  \hbbce{\F_a}{\int_{(a, t]} |S_u- S_a|^2 \od u} \lee d_{\eqref{eq:estimate-S}}^2 (t-a)^2 S_a^2.
\end{align}
Recall that $M = \tilde\vartheta S$ is an $L_2(\whP)$-martingale. Then, applying the triangle inequality and Fubini's theorem we obtain, a.s., 
\begin{align}\label{eq:one-step-approximation}
	& \hbbce{\F_a}{\int_{(a, t]} |\tilde\vartheta_u - \tilde\vartheta_a|^2 S_u^2 \od u} \lee 2 \hbbce{\F_a}{\int_{(a, t]} |M_u - M_a|^2 \od u} + 2 \ttt_a^2\, \hbbce{\F_a}{\int_{(a, t]} |S_u - S_a|^2\od u} \notag\\
	& = 2\hbbce{\F_a}{\int_{(a, t]} (t-u) \od \<M\>^{\whP}_u} + 2 d_{\eqref{eq:estimate-S}}^2 (t-a)^2  \ttt_a^2  S_a^2 \notag\\
	& \lee 2 \frac{t-a}{(T-a)^{1- \theta}}\bigg[ \hbbce{\F_a}{\int_{(a, t]} (T-u)^{1- \theta} \od \<M\>^{\whP}_u} + c_{\eqref{eq:one-step-approximation}}^2 (t-a) \Theta_a^2 S_a^2\bigg],
\end{align}
where $c_{\eqref{eq:one-step-approximation}}:= c_{\eqref{eq:estimate-variance-vartheta}} d_{\eqref{eq:estimate-S}}$.

\smallskip

\textbf{\textit{Step 3:}} \textit{Multi-step approximation}. For $\tau = (t_i)_{i=0}^n \in \cT_{\det}$ and $t\in [0, T]$, we set
$$\<\tilde \vartheta, \tau\>_t: =  \int_0^t   \bigg|\tilde\vartheta_u - \sum_{i=1}^n\tilde\vartheta_{t_{i-1}} \1_{(t_{i-1}, t_i]}(u)\bigg|^2 S_u^2 \od u \stackrel{\textrm{a.s.}}{=}  \int_0^t   |\tilde\vartheta_u - \ttt^\tau_u|^2 S_u^2 \od u.$$ Then there exists a constant $c_{\eqref{eq:estimate:bmo}}>0$ such that for any $\tau \in \cT_{\det}$ and any $a \in [0, T]$,
\begin{align}\label{eq:estimate:bmo}
	\hbce{\F_a}{\<\tilde\vartheta, \tau\>_T - \<\tilde\vartheta, \tau\>_a} \lee c_{\eqref{eq:estimate:bmo}}^2 \|\tau\|_\theta \Phi_a^2 \quad \mbox{a.s.}
\end{align} 
Indeed, for $\tau = (t_i)_{i=0}^n \in \cT_{\det}$ and $a \in [t_{k-1}, t_k)$, $k = 1, \ldots, n$, we let $s_i: = a \vee t_i$, $i = k-1, \ldots, n$. Then, applying \eqref{eq:one-step-approximation} with the aid of \cref{rema:parameter-theta}\eqref{item:1:rema:parameter-theta} yields, a.s., 
\begin{align*}
	& \hbce{\F_a}{\<\tilde\vartheta, \tau\>_T - \<\tilde\vartheta, \tau\>_a} 
	\lee 2\hbbce{\F_a}{|\tilde\vartheta_{a} - \tilde\vartheta_{t_{k-1}}|^2\int_{(a, t_k]} S_u^2 \od u} + 2\sum_{i=k}^n \hbbce{\F_a}{\int_{(s_{i-1}, s_i]}|\tilde\vartheta_{u} - \tilde\vartheta_{a}|^2 S_u^2 \od u}\\
	& \lee 8 c_{\eqref{eq:estimate-variance-vartheta}}^2  c_{\eqref{eq:estimate-S}}^2 \frac{t_k -a}{(T-a)^{\theta-1}} \Phi_a^2  + 4 \|\tau\|_\theta \, \hbbce{\F_a}{\int_{(a, T)}(T-u)^{1- \theta} \od \<M\>^{\whP}_u + c_{\eqref{eq:one-step-approximation}}^2\sum_{i=k}^n (s_i - s_{i-1})\Phi_{s_{i-1}}^2} \\
	& \lee c_{\eqref{eq:estimate:bmo}}^2 \|\tau\|_\theta \Phi_a^2,
\end{align*}
where $c_{\eqref{eq:estimate:bmo}}>0$ is determined by $c_{\eqref{eq:estimate:bmo}}^2 = 8 c_{\eqref{eq:estimate-variance-vartheta}}^2 c_{\eqref{eq:estimate-S}}^2 + 4c_{\eqref{item:curvature-condition} }^2 + 4Tc_{\eqref{eq:one-step-approximation}}^2 \|\Phi\|_{\cSM_2(\whP)}^2$.

\smallskip

\textit{\textbf{Step 4:}} \textit{Examine $E^{\corr}(\tilde \vartheta, \tau|\ep, \kappa)$}. Since
\begin{align*}
	\int_0^T\!\!\int_{|z| > \ep(T-u)^\kappa} |z| \pi^{\whP}_Z(\od z, \od u) \lee \frac{1}{\ep}  \int_0^T (T-u)^{-\kappa}\int_{\R_0} z^2 \nu^{\whP}_Z(u, \od z) \od u \lee \frac{\nu_{\eqref{eq:2nd-moment-hat-mu-Z}} T^{1- \kappa}}{\ep (1- \kappa)}  < \infty,  
\end{align*}
the jump part of $Z$ can be decomposed under $\whP$ as\footnote{For a semimartingale $Y$, a random measure $\Pi$, and suitable integrands $\vartheta$ and $f$, we denote by $\vartheta \cdot Y = ((\vartheta \cdot Y)_t)_{t \in [0, T]}$ and by $f\cdot \Pi = ((f\cdot \Pi)_t)_{t \in [0, T]}$ the integral processes which are c\`adl\`ag and start at zero with 
	$$(\vartheta \cdot Y)_t  : = \int_{(0, t]} \vartheta_u \od Y_u, \quad (f\cdot \Pi)_t:  = \int_{(0, t] \times \R_0} f(z,s) \Pi(\od z, \od s), \quad t \in (0, T].$$
	In particular, if the integrator is induced by the Lebesgue measure, then $\vartheta \cdot \Leb = ((\vartheta \cdot \Leb)_t)_{t\in [0, T]}$ denotes a continuous process given by $(\vartheta \cdot \Leb)_t : = \int_0^t \vartheta_u \od u$.}
\begin{align*}
	z \cdot (N_Z - \pi^{\whP}_Z) & =  (\1_{\{0 <|z| \lee \ep (T-u)^\kappa\}} z)\cdot (N_Z - \pi^{\whP}_Z)  + (\1_{\{|z| > \ep (T-u)^\kappa\}} z)\cdot  N_Z  - (\1_{\{|z| > \ep (T-u)^\kappa\}} z)\cdot  \pi^{\whP}_Z\\
	&=: \ol Z^{1,\ep, \kappa} +  Z^{2, \ep, \kappa} - \bar \gamma^{\ep, \kappa}
\end{align*}
so that
\begin{align*}
	Z = \bar Z_0 + \bar b^Z \cdot \Leb + \sigma  W^{\whP} + \ol Z^{1,\ep, \kappa} + Z^{2, \ep, \kappa} - \bar \gamma^{\ep, \kappa}.
\end{align*}
Then we have the following decomposition 
\begin{align*}
	(\tilde \vartheta_{-} - \tilde \vartheta^\tau)\cdot S  & = ((\tilde \vartheta_{-} - \tilde \vartheta^\tau)S_{-}) \cdot Z  \\
	& = ((\tilde \vartheta_{-} - \tilde \vartheta^\tau)S_{-} \bar b^Z) \cdot \Leb + (\sigma (\tilde \vartheta_{-} - \tilde \vartheta^\tau)S_{-}) \cdot W^{\whP}  \\
	& \quad + ((\tilde \vartheta_{-} - \tilde \vartheta^\tau)S_{-})\cdot \ol Z^{1, \ep, \kappa} + ((\tilde \vartheta_{-} - \tilde \vartheta^\tau)S_{-}) \cdot Z^{2, \ep, \kappa} - ((\tilde \vartheta_{-} - \tilde \vartheta^\tau)S_{-})\cdot \ol \gamma^{\ep, \kappa}.
\end{align*}
We remark that the integral processes on the right-hand side of the equation above exist in $L_2(\whP)$ as a by-product of estimates in \textit{Step 5}-\textit{Step 8} below. The correction term of $A^{\corr}(\ttt, \tau|\ep, \kappa)$ given in \cref{def:approx-correction} satisfies that
\begin{align*}
\sum_{\rho_i(\ep, \kappa) \in [0, \cdot] \cap [0, T)} \(\tilde\vartheta_{\rho_i(\ep, \kappa)-} - \tilde\vartheta^\tau_{\rho_i(\ep, \kappa)}\) \Delta S_{\rho_i(\ep, \kappa)} = ((\tilde \vartheta_{-} - \tilde \vartheta^\tau)S_{-}) \cdot Z^{2, \ep, \kappa}.
\end{align*}
Hence, we arrive at the following decomposition 
\begin{align*}%\label{eq:error-decomposition}
	E^{\corr}(\tilde \vartheta, \tau|\ep, \kappa) 
	& = \underbrace{((\tilde \vartheta_{-} - \tilde \vartheta^\tau)S_{-} \bar b^Z) \cdot \Leb}_{=:E^{\bar b}(\tilde \vartheta, \tau)} + \underbrace{(\sigma (\tilde \vartheta_{-} - \tilde \vartheta^\tau)S_{-}) \cdot W^{\whP}}_{=:E^{\rmC}(\tilde \vartheta, \tau)} \\
	& \quad + \underbrace{((\tilde \vartheta_{-} - \tilde \vartheta^\tau)S_{-})\cdot \ol Z^{1, \ep, \kappa}}_{=:E^{\rmS}(\tilde \vartheta, \tau|\ep, \kappa)}  - \underbrace{((\tilde \vartheta_{-} - \tilde \vartheta^\tau)S_{-})\cdot \ol \gamma^{\ep, \kappa}}_{=:E^{\rmD}(\tilde \vartheta, \tau|\ep, \kappa)},
\end{align*}
where $E^{\bar b}$ denotes the error involving with the process $\bar b^Z$, $E^{\rmC}$ (resp. $E^{\rmS}$, $E^{\rmD}$) is the error related to the continuous martingale (resp. small jump, drift) part of $Z$. By the triangle inequality,

\begin{align}\label{eq:estimate-error-BMO}
	\big\|E^{\corr}(\tilde \vartheta, \tau|\ep, \kappa) \big\|_{\BMO^\Phi_2(\whP)} \lee \sum_{i \in \{\bar b, \rmC\}} \big\|E^{i}(\tilde \vartheta, \tau) \big\|_{\BMO^\Phi_2(\whP)} + \sum_{i \in \{\rmS, \rmD\}} \big\|E^{i}(\tilde \vartheta, \tau|\ep, \kappa) \big\|_{\BMO^\Phi_2(\whP)}.
\end{align}
Before investigating the right-hand side of \eqref{eq:estimate-error-BMO}, let us introduce a variant for the $\BMO_2^\Phi(\whP)$-norm: for $Y \in \CL_0([0, T])$, we let $Y \in \bmo_2^\Phi(\whP)$ if 
\begin{align*}
	\|Y\|_{\bmo_2^\Phi(\whP)} : = \inf \big\{c \gee  0 : \hce{\F_a}{|Y_T - Y_a|^2} \lee c^2 \Phi_a^2 \quad \mbox{a.s., } \forall a\in [0, T]\big\} <\infty.
\end{align*} 

\smallskip

\textit{\textbf{Step 5:}} \textit{Estimate $\big\|E^{\bar b}(\ttt, \tau)\big\|_{\BMO^\Phi_2(\whP)}$}.  One first observes that $E^{\bar b}(\ttt, \tau)$ is a continuous process  which then implies that $E^{\bar b}_{\rho-}(\ttt, \tau) = E^{\bar b}_{\rho}(\ttt, \tau)$ for any stopping times $\rho$. Hence,
\begin{align*}
	\big\|E^{\bar b}(\ttt, \tau)\big\|_{\BMO^\Phi_2(\whP)} & = \inf\big\{c \gee 0 : \hbce{\F_\rho}{|E^{\bar b}_T(\ttt, \tau) - E^{\bar b}_{\rho}(\ttt, \tau)|^2} \lee c^2  \Phi_\rho^2 \quad\mbox{a.s., } \forall \rho \in \cS([0, T]) \big\}\\
	& = \inf\big\{c \gee 0 : \hbce{\F_a}{|E^{\bar b}_T(\ttt, \tau) - E^{\bar b}_{a}(\ttt, \tau)|^2} \lee c^2 \Phi_a^2 \quad\mbox{a.s., } \forall a \in [0, T] \big\}\\
	& =	\big\|E^{\bar b}(\ttt, \tau)\big\|_{\bmo^\Phi_2(\whP)},
\end{align*}
where in order to get the second equality we may replace the stopping times $\rho$ by the deterministic times $a$ by using a standard approximation argument as for instance in \cite[Proposition A.4]{GN20}. Since $S = 1 + S_{-} \cdot Z$, we write $S = 1 + S^{\mar, \whP} + S^{\fv, \whP}$ as the canonical semimartingale decomposition of $S$ under $\whP$ where $S^{\mar, \whP} := \sigma S_{-} \cdot W^{\whP} + (S_{-}z) \cdot (N_Z - \pi^{\whP}_Z)$ and $S^{\fv, \whP}: = (S_{-} \bar b^Z) \cdot \Leb$. Now, for $a\in [t_{k-1}, t_k)$, $k = 1, \ldots, n$, by setting $s_i : = a \vee t_i$, $i = k-1, \ldots, n$ we obtain
\begin{align}\label{eq:estimate:drift-b}
	& \frac{1}{4}\hbce{\F_a}{|E^{\bar b}_T(\ttt, \tau) - E^{\bar b}_{a}(\ttt, \tau)|^2}  = \frac{1}{4} \hbbce{\F_a}{\bigg|\int_a^T (\ttt_{u-} - \ttt^\tau_u)S_{u-} \bar b^Z_u \od u\bigg|^2} \notag\\
	& =  \frac{1}{4} \hbbce{\F_a}{\bigg|(\ttt_a - \ttt_{t_{k-1}}) \int_a^{t_k} S_{u} \bar b^Z_u \od u + \sum_{i=k}^n \int_{s_{i-1}}^{s_i} \big[(M_u - M_{s_{i-1}})  -  \ttt_{s_{i-1}}(S_u - S_{s_{i-1}}) \big] \bar b^Z_u \od u\bigg|^2} \notag \\
	& \lee  \hbbce{\F_a}{\bigg|(\ttt_a - \ttt_{t_{k-1}}) \int_a^{t_k} S_{u} \bar b^Z_u \od u\bigg|^2}  + \hbbce{\F_a}{\bigg|\sum_{i=k}^n \int_{s_{i-1}}^{s_i}(M_u - M_{s_{i-1}}) \bar b^Z_u \od u\bigg|^2} \notag \\
	& \quad + \hbbce{\F_a}{\bigg|\sum_{i=k}^n \ttt_{s_{i-1}} \int_{s_{i-1}}^{s_i}(S^{\mar, \whP}_u - S^{\mar, \whP}_{s_{i-1}}) \bar b^Z_u \od u\bigg|^2} + \hbbce{\F_a}{\bigg|\sum_{i=k}^n \ttt_{s_{i-1}} \int_{s_{i-1}}^{s_i}(S^{\fv, \whP}_u - S^{\fv, \whP}_{s_{i-1}}) \bar b^Z_u \od u\bigg|^2} \notag \\
	&	=: \rmI^{(1)}_{\eqref{eq:estimate:drift-b}} + \rmI^{(2)}_{\eqref{eq:estimate:drift-b}} + \rmI^{(3)}_{\eqref{eq:estimate:drift-b}} + \rmI^{(4)}_{\eqref{eq:estimate:drift-b}}.
\end{align}
Recall $\kappa = \frac{1- \theta}{2}$ and denote
\begin{align}\label{eq:constant-drift-b}
	B_{\eqref{eq:constant-drift-b}} : = \sup_{i = k,\ldots, n}\;\sup_{u \in (s_{i-1}, s_i]\cap (0, T)} \bigg[\frac{1}{(T-u)^{\kappa}}  \int_{u}^{s_i} |\bar b^Z_r| \od r \bigg].
\end{align}
$\bullet$ For $\rmI^{(1)}_{\eqref{eq:estimate:drift-b}}$, using the non-decreasing property of $\Theta$ we get, a.s., 
\begin{align*}
	\rmI^{(1)}_{\eqref{eq:estimate:drift-b}} & \lee   4 c_{\eqref{eq:estimate-variance-vartheta}}^2 (T-a)^{\theta-1} \Theta_a^2 \, \hbbce{\F_a}{\bigg|\int_a^{t_k} S_u |\bar b^Z_u| \od u\bigg|^2} \notag\\
	& \ts \lee 4 c_{\eqref{eq:estimate-variance-vartheta}}^2 (T-a)^{1- \theta}  B_{\eqref{eq:constant-drift-b}}^2 (T-a)^{1- \theta}\; \hbce{\F_a}{\sup_{u \in (a, t_k]} \Phi_u^2}\\
	& \lee \Big[4 c_{\eqref{eq:estimate-variance-vartheta}}^2 \|\Phi\|_{\cSM_2(\whP)}^2 \Big] B_{\eqref{eq:constant-drift-b}}^2 \Phi_a^2.
\end{align*}
$\bullet$ For $\rmI^{(2)}_{\eqref{eq:estimate:drift-b}}$, we use the orthogonality of martingale increments and notice that $\bar b^Z$ is deterministic to find that the mixed terms in the square expansion vanish under the respective conditional expectation. Then, using the stochastic Fubini theorem and applying the conditional It\^o isometry yield, a.s., 
\begin{align*}%\label{eq:estimate-mart-part}
	\rmI^{(2)}_{\eqref{eq:estimate:drift-b}} & = \sum_{i=k}^n \hbbce{\F_a}{\bigg| \int_{s_{i-1}}^{s_i}(M_u - M_{s_{i-1}}) \bar b^Z_u \od u\bigg|^2}   = \sum_{i=k}^n \hbbce{\F_a}{\bigg| \int_{(s_{i-1}, s_i]\cap (0, T)}\bigg( \int_{[u, s_i]} \bar b^Z_r \od r\bigg) \od M_u\bigg|^2} \notag \\
	& = \sum_{i=k}^n \hbbce{\F_a}{ \int_{(s_{i-1}, s_i]\cap (0, T)}\bigg| \int_{[u, s_i]} \bar b^Z_r \od r\bigg|^2 \od \<M\>^{\whP}_u} \lee B_{\eqref{eq:constant-drift-b}}^2 \hbbce{\F_a}{\int_{(a, T)} (T-u)^{1- \theta} \od \<M\>^{\whP}_u} \notag \\
	& \lee  c_{\eqref{item:curvature-condition}}^2  B_{\eqref{eq:constant-drift-b}}^2  \Phi_a^2 .
\end{align*}
$\bullet$ For $\rmI^{(3)}_{\eqref{eq:estimate:drift-b}}$, we also proceed in the same way as for $\rmI^{(2)}_{\eqref{eq:estimate:drift-b}}$ to get, a.s.,
\begin{align*}
	\rmI^{(3)}_{\eqref{eq:estimate:drift-b}} & = \sum_{i=k}^n  \hbbce{\F_a}{\bigg|\ttt_{s_{i-1}} \int_{s_{i-1}}^{s_i}(S^{\mar, \whP}_u - S^{\mar, \whP}_{s_{i-1}}) \bar b^Z_u \od u\bigg|^2} = \sum_{i=k}^n \hbbce{\F_a}{ \ttt_{s_{i-1}}^2 \int_{s_{i-1}}^{s_i}\bigg| \int_u^{s_{i}} \bar b^Z_r \od r\bigg|^2 \od \<S^{\mar, \whP}\>^{\whP}_u}\\
	& \lee B_{\eqref{eq:constant-drift-b}}^2 \sum_{i=k}^n \hbbce{\F_a}{c_{\eqref{eq:estimate-variance-vartheta}}^2 (T-s_{i-1})^{\theta - 1} \Theta_{s_{i-1}}^2 \int_{s_{i-1}}^{s_i} (T-u)^{1- \theta} S_u^2 \bigg(\sigma^2 + \int_{\R_0} z^2 \nu^{\whP}_Z(u, \od z)\bigg)\od u}\\
	& \lee (\sigma^2 + \nu_{\eqref{eq:2nd-moment-hat-mu-Z}})  B_{\eqref{eq:constant-drift-b}}^2 c_{\eqref{eq:estimate-variance-vartheta}}^2  \hbbce{\F_a}{\int_a^T \Theta_u^2 S_u^2 \od u}\\
	&  \lee \Big[(\sigma^2 + \nu_{\eqref{eq:2nd-moment-hat-mu-Z}})   c_{\eqref{eq:estimate-variance-vartheta}}^2 T \|\Phi\|_{\cSM_2(\whP)}^2 \Big] B_{\eqref{eq:constant-drift-b}}^2 \Phi_a^2.
\end{align*}
$\bullet$ For $\rmI^{(4)}_{\eqref{eq:estimate:drift-b}}$, a standard argument using Fubini's theorem yields, a.s.,
\begin{align*}
	\rmI^{(4)}_{\eqref{eq:estimate:drift-b}} & \lee \hbbce{\F_a}{\bigg|\sum_{i=k}^n  c_{\eqref{eq:estimate-variance-vartheta}}  (T-s_{i-1})^{\frac{\theta-1}{2}} \Theta_{s_{i-1}} \int_{s_{i-1}}^{s_i} S_r |\bar b^Z_r| \int_{r}^{s_i} |\bar b^Z_u| \od u \od r\bigg|^2}\\
	& \lee c_{\eqref{eq:estimate-variance-vartheta}}^2 \bigg|\sum_{i=k}^n    (T-s_{i-1})^{\frac{\theta-1}{2}}  \int_{s_{i-1}}^{s_i} |\bar b^Z_r| B_{\eqref{eq:constant-drift-b}} (T-r)^{\frac{1- \theta}{2}} \od r\bigg|^2 \hbce{\F_a}{\ts \sup_{r \in (a, T]} \Phi_r^2}\\
	& \lee \bigg[c_{\eqref{eq:estimate-variance-vartheta}}^2   \bigg|\int_{a}^{T}|\bar b^Z_r| \od r \bigg|^2 \|\Phi\|_{\cSM_2(\whP)}^2 \bigg] B_{\eqref{eq:constant-drift-b}}^2 \Phi_a^2\\
	& \lee \Big[c_{\eqref{eq:estimate-variance-vartheta}}^2 b_{\eqref{eq:drift-Z-hat-P}}^2 T^2 \|\Phi\|_{\cSM_2(\whP)}^2 \Big] B_{\eqref{eq:constant-drift-b}}^2 \Phi_a^2.
\end{align*} 
Plugging those estimates for $\rmI^{(1)}_{\eqref{eq:estimate:drift-b}}$--$\rmI^{(4)}_{\eqref{eq:estimate:drift-b}}$ into \eqref{eq:estimate:drift-b}, we get a constant $c_{\eqref{eq:estimate-BMO-drift-b}}>0$ not depending on $\ep$, $\tau$ such that
\begin{align}\label{eq:estimate-BMO-drift-b}
	\hbce{\F_a}{|E^{\bar b}_T(\ttt, \tau) - E^{\bar b}_{a}(\ttt, \tau)|^2} \lee c_{\eqref{eq:estimate-BMO-drift-b}}^2 B_{\eqref{eq:constant-drift-b}}^2 \Phi_a^2 \quad \mbox{a.s. for all } a\in [0, T].
\end{align}
A simple computation shows $B_{\eqref{eq:constant-drift-b}} \lee T^{\frac{1- \theta}{2}} b_{\eqref{eq:drift-Z-hat-P}} \|\tau\|_\theta$, and hence,
\begin{align}\label{eq:estimate-error-drift-b}
	\big\|E^{\bar b}(\ttt, \tau)\big\|_{\BMO^\Phi_2(\whP)} \lee \big[c_{\eqref{eq:estimate-BMO-drift-b}} T^{\frac{1- \theta}{2}} b_{\eqref{eq:drift-Z-hat-P}} \big] \|\tau\|_\theta.
\end{align}

\smallskip

\textit{\textbf{Step 6:}} \textit{Estimate $\big\|E^{\rmC}(\ttt, \tau)\big\|_{\BMO^\Phi_2(\whP)}$}. By the same reason as in \textit{Step 5}, the pathwise continuity of $E^{\rmC}(\ttt, \tau)$ gives
\begin{align*}
	\big\|E^{\rmC}(\ttt, \tau)\big\|_{\BMO^\Phi_2(\whP)} = \inf\big\{c \gee 0 : \hbce{\F_a}{|E^{\rmC}_T(\ttt, \tau) - E^{\rmC}_{a}(\ttt, \tau)|^2} \lee c^2 \Phi_a^2 \quad\mbox{a.s., } \forall a \in [0, T] \big\}.
\end{align*}
For $a\in [0, T]$, it follows from the conditional It\^o isometry that, a.s.,
\begin{align*}
	& \hbce{\F_a}{|E^{\rmC}_T(\ttt, \tau) - E^{\rmC}_{a}(\ttt, \tau)|^2}  = \sigma^2 \hbbce{\F_a}{\int_a^T |\ttt_u - \ttt^\tau_u|^2S_{u}^2 \od u}  \\
	& = \sigma^2 \hbce{\F_a}{\<\tilde\vartheta, \tau\>_T - \<\tilde\vartheta, \tau\>_a} \lee c_{\eqref{eq:estimate:bmo}}^2 \sigma^2 \|\tau\|_\theta \Phi_a^2.
\end{align*}
Therefore,
\begin{align}\label{eq:estimate-BMO-continuous}
	\big\|E^{\rmC}(\ttt, \tau)\big\|_{\BMO^\Phi_2(\whP)} \lee  c_{\eqref{eq:estimate:bmo}} \sigma \sqrt{\|\tau\|_\theta}.
\end{align}

\smallskip

\textit{\textbf{Step 7:}} \textit{Estimate $\big\|E^{\rmS}(\ttt, \tau|\ep, \kappa)\big\|_{\BMO^\Phi_2(\whP)}$}. Due to \cite[Propositions A.5 and A.4]{GN20}, one has
\begin{align}\label{eq:estimate:error-S}
	\big\|E^{\rmS}(\ttt, \tau|\ep, \kappa)\big\|_{\BMO^\Phi_2(\whP)} \lee \big\|E^{\rmS}(\ttt, \tau|\ep, \kappa)\big\|_{\bmo^\Phi_2(\whP)} + |\Delta E^{\rmS}(\ttt, \tau|\ep, \kappa)|_{\Phi}
\end{align}
where $|\Delta E^{\rmS}(\ttt, \tau|\ep, \kappa)|_{\Phi} : = \inf\{c \gee 0:  |\Delta E^{\rmS}_t(\ttt, \tau|\ep, \kappa)|_{\Phi} \lee c \Phi_t \mbox { for all } t \in [0, T] \mbox{ a.s.}\}$. For the first term on the right-hand side of \eqref{eq:estimate:error-S} and for $a \in [0, T]$, using the conditional It\^o isometry gives, a.s., 
\begin{align*}
	& \hbce{\F_a}{|E^{\rmS}_T(\ttt, \tau|\ep, \kappa) - E^{\rmS}_a(\ttt, \tau|\ep, \kappa)|^2} = \hbbce{\F_a}{\int_a^T |\ttt_u - \ttt^\tau_u|^2S_u^2 \int_{0 < |z| \lee \ep(T-u)^\kappa} z^2 \nu^{\whP}_Z(u, \od z) \od u}\\
	& \lee \bigg\|u \mapsto \int_{0 < |z| \lee \ep T^\kappa} z^2 \nu_Z^{\whP} (u, \od z) \bigg\|_{L_\infty([0, T], \Leb)}   \hbce{\F_a}{\<\tilde\vartheta, \tau\>_T - \<\tilde\vartheta, \tau\>_a}.
\end{align*} 
By an argument using Fubini's theorem as in \eqref{eq:small-jump-estimate-around-0}, we obtain
\begin{align}
	\bigg\|u \mapsto \int_{0 < |z| \lee \ep T^\kappa} z^2 \nu_Z^{\whP} (u, \od z) \bigg\|_{L_\infty([0, T], \Leb)}  & \lee \nu_{\eqref{eq:2nd-moment-hat-mu-Z}} \1_{\{\ep T^\kappa >1\}} + \frac{2 c_{\eqref{eq:small-jump-conditino}}(\alpha)}{2- \alpha}  \ep^{2-\alpha} T^{\kappa(2- \alpha)} \1_{\{0<\ep T^\kappa \lee 1\}} \notag \\
	& \lee c_{\eqref{eq:rate-step-7}} (\ep \wedge 1)^{2- \alpha} \label{eq:rate-step-7}
\end{align}
for some constant $c_{\eqref{eq:rate-step-7}} >0$ depending at most on $\nu_{\eqref{eq:2nd-moment-hat-mu-Z}},  c_{\eqref{eq:small-jump-conditino}}(\alpha), \alpha, T, \kappa$. Thus,
\begin{align*}%\label{eq:estimate-small-jump}
	\big\|E^{\rmS}(\ttt, \tau|\ep, \kappa)\big\|_{\bmo^\Phi_2(\whP)} \lee c_{\eqref{eq:estimate:bmo}} \sqrt{c_{\eqref{eq:rate-step-7}}} (\ep\wedge 1)^{1- \frac{\alpha}{2}} \sqrt{\|\tau\|_\theta}.
\end{align*}
Moreover, due to \eqref{eq:path-variation} we can find an $\Omega_0$ with probability one such that on $\Omega_0$ one has
\begin{align*}
	\forall t \in [0, T] :	|\Delta E^{\rmS}_t(\ttt, \tau|\ep, \kappa)| & = |(\ttt_{t-} - \ttt^\tau_t)S_{t-}|  |\Delta \ol Z^{1, \ep, \kappa}_t| \lee  2 c_{\eqref{eq:estimate-variance-vartheta}}(T-t)^{\frac{\theta-1}{2}} \Theta_{t-} S_{t-} |\Delta \ol Z^{1, \ep, \kappa}_t|\\
	& \lee 2 c_{\eqref{eq:estimate-variance-vartheta}}(T-t)^{\frac{\theta-1}{2}} \Theta_{t-} S_{t-} \ep (T-t)^{\frac{1- \theta}{2}} \lee  2 c_{\eqref{eq:estimate-variance-vartheta}} \ep \Phi_t.
\end{align*}
Hence, $|\Delta E^{\rmS}(\ttt, \tau|\ep, \kappa)|_{\Phi} \lee 2 c_{\eqref{eq:estimate-variance-vartheta}} \ep$. Plugging those estimates into \eqref{eq:estimate:error-S} yields
\begin{align}\label{eq:estimate-small-jump-BMO}
	\big\|E^{\rmS}(\ttt, \tau|\ep, \kappa)\big\|_{\BMO^\Phi_2(\whP)} \lee  c_{\eqref{eq:estimate-small-jump-BMO}} (\ep + (\ep \wedge 1)^{1- \frac{\alpha}{2}} \sqrt{\|\tau\|_\theta})
\end{align}  
for $c_{\eqref{eq:estimate-small-jump-BMO}} : = \max\{c_{\eqref{eq:estimate:bmo}} \sqrt{c_{\eqref{eq:rate-step-7}}}, 2 c_{\eqref{eq:estimate-variance-vartheta}}\}$.

\smallskip

\textit{\textbf{Step 8:}} \textit{Estimate $\big\|E^{\rmD}(\ttt, \tau|\ep, \kappa)\big\|_{\BMO^\Phi_2(\whP)}$}. We first define
\begin{align*}
	D_{\ep, \kappa}(u) := \int_{|z| > \ep (T-u)^\kappa} z \nu^{\whP}_Z(u, \od z),\quad u \in [0, T).
\end{align*}
Then, by the same estimation as given in \cite[Proof of Theorem 3.15, Step 1]{Th20} we get a constant $c_{\eqref{eq:estimate-constant-drift}}>0$ independent of $\tau = (t_i)_{i=0}^n \in \cT_{\det}$ and $\ep>0$ such that
\begin{align}\label{eq:estimate-constant-drift}
	D_{\eqref{eq:estimate-constant-drift}} : & = \sup_{i = 1, \ldots, n} \; \sup_{u \in (t_{i-1}, t_i] \cap [0, T)} \bigg[ \frac{1}{(T-u)^\kappa} \int_u^{t_i} |D_{\ep, \kappa}(r)| \od r\bigg] \notag\\
	& \lee c_{\eqref{eq:estimate-constant-drift}} \begin{cases}
		\|\tau\|_\theta + \ep^{1-\alpha} \|\tau\|_\theta^{1- \kappa(\alpha -1)} & \mbox{if } \eqref{eq:small-jump-conditino} \mbox{ holds with } \alpha \in (1, 2]\\
		\big[1 + \log^+(\frac{1}{\ep}) + \log^+(\frac{1}{\|\tau\|_\theta})\big] \, \|\tau\|_\theta & \mbox{if } \eqref{eq:small-jump-conditino} \mbox{ holds with } \alpha =1\\
		\|\tau\|_\theta & \mbox{if } \eqref{eq:small-jump-conditino} \mbox{ holds with } \alpha \in (0, 1).
	\end{cases}
\end{align}
We now estimate $\big\|E^{\rmD}(\ttt, \tau|\ep, \kappa)\big\|_{\BMO^\Phi_2(\whP)}$ analogously as for $\big\|E^{\bar b}(\ttt, \tau)\big\|_{\BMO^\Phi_2(\whP)}$ in Step 5, where $D_{\ep, \kappa}(u)$ and $D_{\eqref{eq:estimate-constant-drift}}$ play the role of $\bar b^Z_u$ and $B_{\eqref{eq:constant-drift-b}}$ respectively. This then leads us to
\begin{align*}
	\hbce{\F_a}{|E^{\rmD}_T(\ttt, \tau|\ep, \kappa) - E^{\rmD}_a(\ttt, \tau|\ep, \kappa)|^2} \lee c_{\eqref{eq:estimate-BMO-drift-b}}^2 D_{\eqref{eq:estimate-constant-drift}}^2 \Phi_a^2 \quad \mbox{a.s. for all } a\in [0, T]
\end{align*}
which means that
\begin{align}\label{eq:estimate-BMO-drift-D}
	\big\|E^{\rmD}(\ttt, \tau|\ep, \kappa)\big\|_{\BMO^\Phi_2(\whP)} = \big\|E^{\rmD}(\ttt, \tau|\ep, \kappa)\big\|_{\bmo^\Phi_2(\whP)} \lee c_{\eqref{eq:estimate-BMO-drift-b}} D_{\eqref{eq:estimate-constant-drift}}.
\end{align}
Combining \eqref{eq:estimate-error-drift-b}, \eqref{eq:estimate-BMO-continuous}, \eqref{eq:estimate-small-jump-BMO} and \eqref{eq:estimate-BMO-drift-D} with \eqref{eq:estimate-error-BMO} yields \eqref{eq:thm:rate-whP}.\qed

\section{Some technical results}\label{appen:technical-results}

\subsection{Some properties of stable-like-measures} \label{subsec:property-stable} 
We recall $\sUS(\alpha), \sS(\alpha)$ from \cref{defi:holder-stable}. 

\begin{lemm}[See also \cite{Th20}, Remark 4.5]\label{lemm:property-stable}  Let $\ell$ be a L\'evy measure.
	\begin{enumerate}[\rm (1)]
\item \label{item:equivalent-upper-stable} For $\alpha \in (0, 2]$ one has
\begin{align*}
	 \underbrace{\sup_{r \in (0,1)} r^\alpha \int_{\frac{r}{2} < |x| \lee r} \ell(\od x) <\infty}_{=: A(\alpha)} \Leftrightarrow \ell \in \sUS(\alpha) \Leftrightarrow 
	 \underbrace{\limsup_{|u| \to \infty} \frac{1}{|u|^{\alpha}} \int_{\R} (1-\cos (ux)) \ell
	 (\od x) <\infty}_{=: B(\alpha)}. 
\end{align*}
		
		\item \label{GB-index} If $\ell \in \sS(\alpha)$ for some $\alpha \in (0, 2)$, then $\alpha$ is equal to the Blumenthal--Getoor index of $\ell$, i.e.,  $\alpha = \inf \{ q \in [0, 2] : \int_{|x| \lee 1} |x|^q \ell(\od x) <\infty\}$.
		
		\item \label{item:sufficient-S1} If $\ell$ has a density $p(x) : = \ell(\od x)/\od x$ which satisfies
		\begin{align*}
		\liminf_{x \to 0-} (-x)^{1+\alpha} p(x) +  \liminf_{x \to 0+} x^{1+\alpha} p(x) >0 \quad \mbox{and} \quad  \limsup_{|x|\to 0} |x|^{1+\alpha} p(x) <\infty
		\end{align*}
	for some $\alpha \in (0, 2)$,		then $\ell \in \sS(\alpha)$.
	\end{enumerate}
\end{lemm}

\begin{proof}
	 \eqref{item:equivalent-upper-stable} We show $A(\alpha) \Rightarrow \ell \in \sUS(\alpha) \Rightarrow B(\alpha) \Rightarrow A(\alpha)$. Denote
	 \begin{align*}
	 	c_1(\alpha): = \sup_{r \in (0, 1)} r^\alpha \int_{r < |x| \lee 1} \ell(\od x) \quad \mbox{and} \quad  c_2(\alpha): = \sup_{r \in (0,1)} r^\alpha \int_{\frac{r}{2} < |x| \lee r} \ell(\od x).
	 \end{align*}
 $\bullet$ $A(\alpha) \Rightarrow \ell \in \sUS(\alpha)$: Let $r \in (0,1)$ and $m_r \in \bN \cup\{0\}$ with $2^{-m_r - 1} < r \lee 2^{-m_r}$. Then
 \begin{align*}
 	r^\alpha \int_{r < |x| \lee 1} \ell(\od x) & \lee 2^{- \alpha m_r} \int_{2^{-m_r - 1} < |x| \lee 1} \ell(\od x) = 2^{- \alpha m_r} \sum_{k=0}^{m_r} \int_{2^{-k - 1} < |x| \lee 2^{-k}} \ell(\od x)\\
 	& \lee c_2(\alpha) 2^{- \alpha m_r} \sum_{k=0}^{m_r} 2^{\alpha k} \lee c_2(\alpha) 2^{- \alpha m_r} \frac{2^{\alpha (m_r+1)}}{2^\alpha -1} = c_2(\alpha) \frac{2^\alpha}{2^\alpha -1}.
 \end{align*}
  $\bullet$ $\ell \in \sUS(\alpha) \Rightarrow B(\alpha)$: For $|u| > 1$, one has
  \begin{align*}
  	\frac{1}{|u|^\alpha} \int_{\R} (1- \cos(ux) \ell(\od x) & = \frac{1}{|u|^\alpha} \bigg(\int_{1 < |ux| \lee |u|} + \int_{|ux| \lee 1} + \int_{|ux| >|u|}\bigg) (1- \cos(ux)) \ell(\od x)\\
  	& \lee \frac{2}{|u|^\alpha} \int_{\frac{1}{|u|} < |x| \lee 1} \ell(\od x) + |u|^{2- \alpha} \int_{|x| \lee \frac{1}{|u|}} x^2 \ell(\od x) + \frac{2}{|u|^\alpha} \int_{|x|>1} \ell(\od x)\\
  	& \lee 2 c_1(\alpha) + |u|^{2- \alpha} \int_{|x| \lee \frac{1}{|u|}} x^2 \ell(\od x) + \frac{2}{|u|^\alpha} \int_{|x|>1} \ell(\od x).
  \end{align*}
If $\alpha =2$, then it is obvious from the estimate above that $\ell \in \sUS(2) \Rightarrow B(2)$. For $\alpha \in (0, 2)$, we use Fubini's theorem to get that, for $r \in (0, 1]$,
\begin{align}\label{eq:small-jump-estimate-around-0}
	\int_{|x| \lee r} x^2 \ell(\od x) & = \int_{|x| \lee r}\int_0^{x^2} \od y \,                                                                                                              \ell(\od x) = \int_0^{r^2} \int_{\sqrt{y} <|x| \lee r} \ell(\od x) \od y \notag \\
& \lee c_1(\alpha) \int_0^{r^2} y^{-\frac{\alpha}{2}} \od y = \frac{2c_1(\alpha)}{2 - \alpha} r^{2- \alpha}.
\end{align}
Hence, 
\begin{align*}
	\limsup_{|u| \to \infty} \frac{1}{|u|^\alpha} \int_{\R} (1- \cos(ux)) \ell(\od x) \lee 2 c_1(\alpha) + \frac{2 c_1(\alpha)}{2- \alpha}  <\infty.
\end{align*}
$\bullet$ $B(\alpha) \Rightarrow A(\alpha)$: Using the inequality
$1- \cos y \gee \frac{y^2}{3}$, $y \in [0, 2]$, we infer that, for $|u| \gee 2 $,
\begin{align*}
	& \frac{1}{|u|^\alpha}\int_{\R} (1 - \cos(ux))\ell(\od x) \gee \frac{1}{|u|^\alpha} \int_{2\gee |ux| >1} (1- \cos (ux)) \ell(\od x) \\
	& \gee \frac{|u|^{2-\alpha}}{3} \int_{\frac{2}{|u|} \gee |x| > \frac{1}{|u|}} x^2 \ell(\od x)
	\gee \frac{1}{3|u|^\alpha} \int_{\frac{2}{|u|} \gee |x| > \frac{1}{|u|}} \ell(\od x).
\end{align*}
Since $\limsup_{|u| \to \infty} \frac{1}{|u|^\alpha} \int_{\R} (1 - \cos(ux)) \ell(\od x) <\infty$, it implies that 
\begin{align*}
	\sup_{|u| \gee 2} \frac{1}{|u|^\alpha} \int_{\frac{2}{|u|} \gee |x| > \frac{1}{|u|}} \ell(\od x) \lee \sup_{|u| \gee 2} \frac{3}{|u|^\alpha} \int_{\R} (1 - \cos(ux)) \ell(\od x) <\infty.
\end{align*}
	 
	 %\medskip
	 
	\eqref{GB-index} follows from \cite[Theorem 3.2]{BG61}.
	 
	 \medskip
	 \eqref{item:sufficient-S1}  By assumption, there exist constants $0< c_1, c_2 \lee C$ and $\ep >0$ such that
	 \begin{align*}
	 c_1(-x)^{-1 - \alpha}\1_{\{-\ep \lee x <0\}} + c_2  x^{-1- \alpha} \1_{\{0 < x \lee \ep\}} \lee p(x)  \lee C |x|^{-1- \alpha},\quad \forall 0< |x| \lee \ep.
	 \end{align*}
	 We set 
	 \begin{align*}
	 \ell_1(\od x) : = (c_1(-x)^{-1 - \alpha}\1_{\{-\ep \lee x <0\}} + c_2  x^{-1- \alpha} \1_{\{0 < x < \ep\}}) \od x \quad \mbox{and} \quad \ell_2(\od x) : = \ell(\od x) - \ell_1(\od x).
	 \end{align*}
	 Then, $\ell_1$ satisfies \eqref{eq:item:alpha-stable} with $k(x) = c_1 \1_{\{-\ep \lee x <0\}} + c_2 \1_{\{0 \lee x < \ep\}}$. For $\ell_2$, we have
	 \begin{align*}
	 \int_{\R} (1- \cos(ux)) \ell_2(\od x) & \lee (C-(c_1 \wedge c_2))\int_{0<|x| < \ep} \frac{1- \cos(ux)}{|x|^{1+\alpha}} \od x + 2 \int_{|x|\gee \ep}  \ell(\od x)\\
	 & = 2(C-(c_1 \wedge c_2))|u|^\alpha \int_0^{|u|\ep} \frac{1- \cos x}{x^{1+\alpha}} \od x + 2 \int_{|x|\gee \ep}  \ell(\od x)
	 \end{align*}
	from which we derive that $\limsup_{|u| \to \infty} \frac{1}{|u|^\alpha} \int_{\R} (1- \cos(ux)) \ell_2(\od x) <\infty$. By Item \eqref{item:equivalent-upper-stable}, it holds that $\ell_2 \in \sUS(\alpha)$, and hence, $\ell \in \sS(\alpha)$.
\end{proof}

\subsection{Regularity of weight processes}\label{secsec:apendix-weight-regularity} Let $(\Omega, \F, \bQ, (\F_t)_{t \in [0, T]})$ be a filtered probability space, $T \in (0, \infty)$. For $\Psi \in \CL^+([0, T])$, we define $\Phi \in \CL^+([0, T])$ by setting 
\begin{align*}
\ts \Phi_t : = \Psi_t + \sup_{u \in [0, t]} |\Delta \Psi_u|, \quad t \in [0, T].
\end{align*}
It is clear that $\Psi \vee \Psi_{-} \lee \Phi$, and moreover, $\Psi \equiv \Phi$ if and only if $\Psi$ is continuous.

 \begin{lemm}[\cite{Th20}, Proposition A.1] \label{lemm:olderline-Phi} If $\Psi \in \cSM_q(\bQ)$ for some $q\in (0, \infty)$, then $\Phi \in \cSM_q(\bQ)$.
 \end{lemm}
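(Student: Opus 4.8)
The plan is to write $\ol\Phi_t = \Phi_t + D_t$ with $D_t := \sup_{u\in[0,t]}|\Delta\Phi_u|$, and to reduce the $\cSM_q$-estimate for $\ol\Phi$ to the one already available for $\Phi$. The process $D$ is non-decreasing, and $D_a$ is $\F_a$-measurable since $\Phi$ is adapted and c\`adl\`ag; moreover $\ol\Phi_a = \Phi_a + D_a$ dominates both $\Phi_a$ and $D_a$. Note that the L\'evy structure of $X$ plays no role here: the argument is purely pathwise together with one application of the defining inequality of $\cSM_q(\bQ)$.

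First I would bound the running supremum of $\ol\Phi$ on $[a,T]$. Because $D$ is non-decreasing, $\sup_{t\in[a,T]}D_t = D_T = \max\{D_a,\; \sup_{u\in(a,T]}|\Delta\Phi_u|\}$. For every $u\in(a,T]$ the c\`adl\`ag property gives $\Phi_{u-} = \lim_{s\uparrow u}\Phi_s \leq \sup_{t\in[a,T]}\Phi_t$ (the left limit is approached along $s\in(a,u)\subseteq[a,T]$), whence $|\Delta\Phi_u| \leq \Phi_u + \Phi_{u-} \leq 2\sup_{t\in[a,T]}\Phi_t$. Combining these observations yields the pathwise bound
\begin{align*}
\sup_{t\in[a,T]}\ol\Phi_t \;\leq\; \sup_{t\in[a,T]}\Phi_t + D_T \;\leq\; 3\sup_{t\in[a,T]}\Phi_t + D_a,
\qquad \forall a\in[0,T].
\end{align*}

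Next I would raise this to the power $q$ and apply $\ceq{\F_a}{\cdot}$. Using the elementary inequality $(x+y)^q \leq C_q(x^q+y^q)$ (with $C_q=1$ for $q\in(0,1]$ and $C_q=2^{q-1}$ for $q\geq1$) together with the $\F_a$-measurability of $D_a$, I obtain
\begin{align*}
\ceq{\F_a}{\sup_{t\in[a,T]}\ol\Phi_t^{\,q}} \;\leq\; C_q\Big(3^q\,\ceq{\F_a}{\sup_{t\in[a,T]}\Phi_t^{\,q}} + D_a^q\Big).
\end{align*}
Since $\Phi\in\cSM_q(\bQ)$, the first conditional expectation is at most $\|\Phi\|_{\cSM_q(\bQ)}^q\,\Phi_a^q$. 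Finally, bounding $\Phi_a^q \leq \ol\Phi_a^q$ and $D_a^q\leq\ol\Phi_a^q$ collapses the right-hand side into a multiple of $\ol\Phi_a^q$, giving $\|\ol\Phi\|_{\cSM_q(\bQ)}^q \leq C_q\big(3^q\|\Phi\|_{\cSM_q(\bQ)}^q+1\big)<\infty$, which is the claim.

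The only delicate point—and the one I would state carefully—is the jump estimate $|\Delta\Phi_u|\leq 2\sup_{t\in[a,T]}\Phi_t$ for $u>a$, where one must verify that the left limit $\Phi_{u-}$ is controlled by the supremum over the \emph{closed} interval $[a,T]$ rather than spilling below $a$; this is ensured precisely because $u>a$ strictly, so the approximating times $s\uparrow u$ eventually lie in $(a,T]$. Everything else is a routine combination of the monotonicity of $D$, the elementary power inequality, and the defining property of $\cSM_q(\bQ)$.
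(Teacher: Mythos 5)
Your proof is correct. Note that the paper does not prove this lemma inline but simply cites \cite[Proposition 7.1]{Th20}; your argument---splitting the jump supremum at time $a$, bounding the post-$a$ jumps by $|\Delta\Phi_u|\lee \Phi_u+\Phi_{u-}\lee 2\sup_{t\in[a,T]}\Phi_t$, absorbing the pre-$a$ part $D_a$ into $\ol\Phi_a$, and then invoking the defining $\cSM_q(\bQ)$-inequality for $\Phi$ once---is precisely the natural pathwise proof of that cited result, carried out correctly here, including the delicate point that $\Phi_{u-}$ for $u>a$ is approached through times in $(a,u)\subseteq[a,T]$.
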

 
Assume that $X = (X_t)_{t \in [0, T]}$ is a L\'evy process with $(X|\bQ) \sim (\gamma^\bQ, \sigma^\bQ, \nu^\bQ)$.  For  $\eta \in [0, 1]$, we define $\Psi(\eta) \in \CL^+([0, T])$ by
 \begin{align*}
 \ts \Psi(\eta)_t = \e^{X_t} \sup_{u \in [0, t]} \e^{(\eta -1) X_u},\quad t\in [0, T].
 \end{align*}

 \begin{lemm}[\cite{Th20}, Proposition A.2] \label{prop:regularity-weight-process} If $\int_{|x|>1} \e^{qx} \nu^\bQ(\od x) <\infty$ for some $q \in (1, \infty)$, then $\Psi(\eta) \in \cSM_q(\bQ)$  for all $\eta \in [0, 1]$.
 \end{lemm}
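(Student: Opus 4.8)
The plan is to verify the defining inequality of $\cSM_q(\bQ)$ directly, reducing it to a single unconditional exponential-moment bound by exploiting the independent-increment structure of $X$ under $\bQ$. First I would fix $a \in [0, T]$ and, writing $X_u = X_a + (X_u - X_a)$ for $u \gee a$, note that the post-$a$ increment process $(X_u - X_a)_{u \in [a, T]}$ is independent of $\F_a$ and has, in law, the distribution of $X$ run over $[0, T-a]$. Since $\eta - 1 \lee 0$, one has $\sup_{u\lee t}\e^{(\eta-1)X_u} = \e^{(\eta-1)\inf_{u\lee t}X_u}$; splitting the infimum at $a$ and using $\sup_{u\lee a}\e^{(\eta-1)X_u} \gee \e^{(\eta-1)X_a}$, a short computation gives the pathwise bound
\[
\Phi(\eta)_t \lee \Phi(\eta)_a\,\wt\Phi_t, \qquad t \in [a, T],
\]
where $\wt\Phi_t := \e^{X_t - X_a}\sup_{a\lee u\lee t}\e^{(\eta-1)(X_u - X_a)}$ is exactly the weight process built from the post-$a$ increments. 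Because $\wt\Phi$ is independent of $\F_a$, taking conditional expectations yields $\ceq{\F_a}{\sup_{t\in[a,T]}\Phi(\eta)_t^q} \lee \Phi(\eta)_a^q\,\E^{\bQ}[\sup_{t\in[a,T]}\wt\Phi_t^q] \lee \Phi(\eta)_a^q\,\E^{\bQ}[\sup_{t\in[0,T]}\Phi(\eta)_t^q]$, the last step using that enlarging the time interval only increases the supremum. Hence $\|\Phi(\eta)\|_{\cSM_q(\bQ)}^q \lee \E^{\bQ}[\sup_{t\in[0,T]}\Phi(\eta)_t^q]$, and everything reduces to showing this expectation is finite.

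Second I would remove the dependence on $\eta$ by convexity. Writing $\underline X_t := \inf_{u\lee t}X_u$, the exponent of $\Phi(\eta)_t = \e^{X_t + (\eta-1)\underline X_t}$ is the convex combination $\eta X_t + (1-\eta)(X_t - \underline X_t)$, so convexity of the exponential gives $\Phi(\eta)_t \lee \eta\,\e^{X_t} + (1-\eta)\,\e^{X_t - \underline X_t}$ and therefore $\sup_{t\lee T}\Phi(\eta)_t^q \lee \eta\sup_{t\lee T}\e^{qX_t} + (1-\eta)\sup_{t\lee T}\e^{q(X_t - \underline X_t)}$. Thus it suffices to treat the two extremal cases $\eta = 1$ and $\eta = 0$ separately, i.e.\ to prove $\E^{\bQ}\e^{q\sup_{t\lee T}X_t} < \infty$ and $\E^{\bQ}\e^{q\sup_{t\lee T}(X_t - \underline X_t)} < \infty$.

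Finally I would establish these two exponential-moment estimates. By hypothesis and \cite[Theorem 25.3]{Sa13}, $\E^{\bQ}\e^{qX_T} < \infty$. The key point is that both the running supremum $\sup_{t\lee T}X_t$ and the maximal rise $\sup_{t\lee T}(X_t - \underline X_t) = \sup_{0\lee u\lee t\lee T}(X_t - X_u)$ are generated purely by the \emph{upward} fluctuations of $X$, so their $q$-th exponential moments are controlled solely by the positive tail $\int_{x>1}\e^{qx}\nu^{\bQ}(\od x)$ and do not feel the negative-jump tail; indeed, a spectrally negative process has its maximal rise on $[0,T]$ governed only by its drift and diffusion part, hence finite exponential moments of all orders, no matter how heavy its downward jumps are. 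I expect this last step to be the main obstacle: the elementary bound $\sup_{t\lee T}(X_t - \underline X_t) \lee \sup_{t\lee T}X_t - \inf_{t\lee T}X_t$ is too lossy, since it spuriously reintroduces $-\underline X_T$, whose exponential moment would require control of the negative tail that is not assumed. The correct route keeps the reflection structure intact and invokes the standard Lévy fluctuation-theoretic fact that $\E^{\bQ}\e^{\lambda\sup_{t\lee T}X_t} < \infty$ and $\E^{\bQ}\e^{\lambda\sup_{t\lee T}(X_t - \underline X_t)} < \infty$ whenever $\E^{\bQ}\e^{\lambda X_T} < \infty$ for $\lambda > 0$; combining this with the two reductions above completes the proof.
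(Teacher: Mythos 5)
Your two structural reductions are correct, and since the paper itself gives no internal proof of this lemma (it defers to \cite[Proposition 7.2]{Th20}), I judge the proposal on its own merits. The pathwise multiplicative bound in your first step does hold: writing $A := \e^{(\eta-1)\inf_{u\lee a}X_u}$ and $B := \e^{(\eta-1)\inf_{a\lee u\lee t}X_u}$, one has $\Phi(\eta)_t = \e^{X_t}\max(A,B)$ and $\Phi(\eta)_a\,\wt\Phi_t = \e^{X_t}AB\,\e^{(1-\eta)X_a}$, and since $A\,\e^{(1-\eta)X_a} = \e^{(1-\eta)(X_a - \inf_{u\lee a}X_u)}\gee 1$ and $B\,\e^{(1-\eta)X_a}\gee 1$, the inequality $\max(A,B)\lee AB\,\e^{(1-\eta)X_a}$ follows. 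Combined with independence of increments this gives $\|\Phi(\eta)\|_{\cSM_q(\bQ)}^q \lee \E^\bQ\big[\sup_{t\lee T}\Phi(\eta)_t^q\big]$, uniformly in $a$, and the convexity step reducing everything to the two extremal cases $\eta=1$ and $\eta=0$ is also fine.

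The gap is the final step, which you yourself flag as the main obstacle and then dispose of by invoking an unnamed ``standard fluctuation-theoretic fact.'' For $\eta=1$ this is acceptable: with $\lambda := q/2$ and $\e^{t\kappa(\lambda)} := \E^\bQ \e^{\lambda X_t}$ (finite by \cite[Theorem 25.3]{Sa13}), the process $\e^{\lambda X_t - t\kappa(\lambda)}$ is a positive martingale, and Doob's $L_2$-maximal inequality gives $\E^\bQ \e^{q\sup_{t\lee T}X_t} <\infty$. But for $\eta = 0$ the assertion $\E^\bQ\e^{q\sup_{t\lee T}(X_t - \underline{X}_t)}<\infty$, where $\underline{X}_t := \inf_{u\lee t}X_u$, under \emph{only} the positive-tail hypothesis is exactly the mathematical content of the lemma --- it is the place where the one-sided condition $\int_{|x|>1}\e^{qx}\nu^\bQ(\od x)<\infty$ must do its work --- and it is not a quotable textbook fact, so as written the heart of the proof is open. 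Fortunately it closes in a few lines within your framework. Set $Y := X - \underline{X}$. Since $\underline{X}_{t+s}\lee \underline{X}_t$, one has $Y_{t+s}\gee Y_t + (X_{t+s}-X_t)$, hence for $\lambda = q/2$,
\begin{align*}
\E^\bQ\big[\e^{\lambda Y_{t+s}}\,\big|\,\F_t\big] \gee \e^{\lambda Y_t}\,\E^\bQ \e^{\lambda(X_{t+s}-X_t)} = \e^{\lambda Y_t}\e^{s\kappa(\lambda)},
\end{align*}
so $\e^{\lambda Y_t - t\kappa(\lambda)}$ is a non-negative c\`adl\`ag submartingale; Doob's $L_2$-inequality then gives $\E^\bQ\sup_{t\lee T}\e^{qY_t} \lee c\,\E^\bQ\e^{qY_T}$ with $c = c(q,T,\kappa(\lambda))$, and by the duality lemma (time reversal of the L\'evy path) $Y_T$ has the same law as $\sup_{t\lee T}X_t$, so the right-hand side is finite by the $\eta=1$ case. (Alternatively one can decompose $X$ into the part with jumps in $[-1,1]$, the compound Poisson part of jumps $>1$, and that of jumps $<-1$: the maximal rise of $X$ is at most the maximal rise of the first part plus the terminal value of the second, the third contributing nothing, and independence plus Doob for the bounded-jump part finishes it.) With either completion supplied, your proof is correct.
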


\subsection{Gradient type estimates for a L\'evy semigroup on H\"older spaces} In this part we recall $C^{0, \eta}$ and $\sS(\alpha)$ from \cref{defi:holder-stable}.

Assume that $X = (X_t)_{t \gee 0}$ is a L\'evy process with respect to a probability measure $\bQ$ with $(X|\bQ) \sim (\gamma^\bQ, \sigma^\bQ, \nu^\bQ)$. 
Let $\eta \in [0, 1]$, $g \in C^{0, \eta}$, and assume that $\int_{|x|>1} \e^{\eta x} \nu^{\bQ}(\od x) <\infty$. Then, for all $t\gee 0$ and $y, z >0$ one has $\E^{\bQ}|g(y\e^{X_t})| <\infty$ and 
$$|\E^{\bQ} g(y\e^{X_t}) - \E^{\bQ}g(z\e^{X_t})| \lee |g|_{C^{0, \eta}} \E^{\bQ} \e^{\eta X_t} |y-z|^\eta.$$ This leads us to define the map $Q_t \colon C^{0, \eta} \to C^{0, \eta}$ by setting
	\begin{align*}
	Q_t g(y) : = \E^\bQ g(y \e^{X_t}), \quad y >0, t \gee 0.
	\end{align*}
	It is clear that $Q_{t+s} = Q_t \circ Q_s$ for all $s,t \gee 0$ which means that $(Q_t)_{t\gee 0}$ is a semigroup on $C^{0, \eta}$.

For a L\'evy measure $\ell$ on $\cB(\R)$ and a Borel function $g$, we write symbolically 
\begin{align*}%\label{eq:varGamma-function}
\varGamma^\bQ_\ell(t, y) := |\sigma^\bQ|^2 \pd_y Q_{t}g(y) + \int_{ \R} \frac{Q_{t} g(\e^x y) - Q_{t} g(y)}{y}(\e^x -1)\ell(\od x), \quad y >0, t\gee 0,
\end{align*}
where we set $\pd_y Q_t g : =0$ if $\sigma^\bQ=0$.

\begin{prop}[\cite{Th20}, Proposition B.5]\label{theo:envelop-MVH-strategy} Let $g\in C^{0, \eta}$ with $\eta \in [0, 1]$. Assume that $\ell$ is a L\'evy measure with $\int_{|x|>1} \e^{(\eta +1)x} \ell(\od x) <\infty$. Then, for any $T\in (0, \infty)$ there is a  $c_{\eqref{eq:thm:Holder-case-estimate:psi-process}}>0$ such that
	\begin{align}\label{eq:thm:Holder-case-estimate:psi-process}
		|\varGamma_\ell^{\bQ}(t, y)| \lee c_{\eqref{eq:thm:Holder-case-estimate:psi-process}}  V(t) y^{\eta -1}, \quad \forall (t, y) \in (0, T]\times (0, \infty),
	\end{align}
	where the cases for $V(t)$ are provided as follows:
	\begin{enumerate}[\rm (1)]

		\item \label{item:1:sigma>0-envelope} If $\sigma^{\bQ} >0$ and $\int_{|x|>1} \e^{2x} \nu^{\bQ} (\od x) <\infty$, then $V(t) = t^{\frac{\eta -1}{2}}$.
		
		\item \label{item:2:sigma=0-envelope} If $\sigma^{\bQ} =0$, $\int_{|x|>1}\e^{\eta x} \nu^{\bQ}(\od x) <\infty$ and $\int_{|x| \lee 1} |x|^{\eta +1} \ell(\od x) <\infty$, then $V(t)=1$.
		
		\item \label{item:2.2:sigma=0-envelope} If $\sigma^{\bQ} =0$, $\eta \in [0, 1)$ and if the following two conditions hold:
		\begin{enumerate}[\rm (a)]
			\item \label{item:3a} $\nu^{\bQ} \in \sS(\alpha)$ for some $\alpha \in (0, 2)$ and $\int_{|x|>1} \e^x \nu^{\bQ} (\od x) <\infty$,
			\item \label{item:3b} there is a $\beta \in [0, 2]$ such that 
			\begin{align}\label{eq:small-ball-condition-ell}
				\ts 0 < c_{\eqref{eq:small-ball-condition-ell}} : = \sup_{r \in (0, 1)} r^\beta \int_{r < |x| \lee 1} \ell(\od x) < \infty,
			\end{align}
		\end{enumerate}
		then 
		\begin{align*}
			V(t) = \begin{cases}
				t^{\frac{\eta +1 -\beta}{\alpha}} & \mbox{ if } \beta \in (1 + \eta, 2]\\
				\max\{1, \log(1/t)\} & \mbox{ if } \beta = 1+ \eta\\
				1 & \mbox{ if } \beta \in [0, 1+ \eta).
			\end{cases}
		\end{align*}
	\end{enumerate}
	Here, the constant $c_{\eqref{eq:thm:Holder-case-estimate:psi-process}}$ may depend on $\beta$ in Item \eqref{item:2.2:sigma=0-envelope}.
\end{prop}
One remarks that the definition of $\sS(\alpha)$ in \cref{theo:envelop-MVH-strategy} is slightly more general than that in \cite[Proposition B.5]{Th20}, however, the proof remains the same.

\subsection*{Acknowledgment} A major part of this work was done when the author was affiliated to the Department of Mathematics and Statistics, University of Jyv\"askyl\"a, Finland. The author is very grateful to Christel Geiss and Stefan Geiss for helpful discussions.

\bibliographystyle{amsplain}

\end{document}